\newtheorem{theorem}{Theorem}[section]
\theoremstyle{plain}
\newtheorem{corollary}[theorem]{Corollary}
\newtheorem{defi}[theorem]{Definition}
\newtheorem{lemma}[theorem]{Lemma}
\newtheorem{prop}[theorem]{Proposition}
\newtheorem{remark}[theorem]{Remark}
\numberwithin{equation}{section}
\def\Error{{\mathcal R}}
\def\wmu{\widetilde{\mu}}
\def\Jk{{\mathcal J}}
\def\Xxi{{\mathfrak X}_\zeta}
\def\Xx{{\mathfrak X}}
\def\FrA{{\mathfrak A}}
\def\One{{1\!\!1}}
\def\dist{{\rm dist}}
\def\Hk{{\mathcal H}}
\def\wtil{\widetilde}
\def\half{\frac{1}{2}}
\newcommand{\lam}{\lambda}
\newcommand{\gam}{\gamma}
\newcommand{\om}{\omega}
\def\Om{\Omega}
\newcommand{\Sig}{\Sigma}
\def\Sigb{{\mathbf \Sig}}
\newcommand{\Gam}{\Gamma}
\newcommand{\sig}{\sigma}
\def\bbe{\vec{e}}
\newcommand{\R}{{\mathbb R}}
\newcommand{\Z}{{\mathbb Z}}
\newcommand{\C}{{\mathbb C}}
\def\N{{\mathbb N}}
\newcommand{\Nat}{{\mathbb N}}
\def\A{{\mathcal A}}
\def\Sk{{\mathcal S}}
\def\Pk{{\mathcal P}}
\def\Sf{{\sf S}}
\def\T{{\mathbb T}}
\def\one{\vec{1}}
\def\be{\begin{equation}}
\def\ee{\end{equation}}
\newcommand{\Ek}{{\mathcal E}}
\newcommand{\Fk}{{\mathcal F}}
\newcommand{\eps}{{\varepsilon}}
\newcommand{\es}{\emptyset}
\def\ov{\overline}
\def\und{\underline}
\def\Card{{\rm card}}
\def\card{{\rm card}}
\def\a{a}
\def\ve1{\vec{1}}
\def\M{{\mathbf M}}
\def\Ak{{\mathcal A}}
\def\Pbi{{\mathbf \Pi}}
\def\Bu{B}
\begin{document}

\title[Spectral measures in substitution dynamics]{On the modulus of continuity for spectral measures in substitution dynamics}

\author{Alexander I. Bufetov}
\address{Alexander I. Bufetov\\Aix-Marseille Universit{\'e}, CNRS, Centrale Marseille, I2M, UMR 7373,  Marseille, France; Steklov Institute, Moscow; 
The Institute for Information Transmission Problems, Moscow; 
National Research University Higher School of Economics, Moscow, Russia;
Rice University, Houston TX USA
}
\email{bufetov@mi.ras.ru}
\author{Boris Solomyak }
\address{Boris Solomyak, Box 354350, Department of Mathematics,
University of Washington, Seattle, WA, USA}
\email{solomyak@uw.edu}

\begin{abstract}
The paper gives first quantitative estimates on the modulus of
continuity of the spectral measure  for weak mixing suspension flows over substitution
automorphisms, which yield information about the ``fractal'' structure of these measures. The main results
are, first, a H\"older estimate for the spectral measure of almost all
suspension flows with a piecewise constant roof function; second, a
log-H\"older estimate for self-similar suspension flows; and, third, a
H\"older asymptotic expansion of the spectral measure at zero for such
flows. Our second result implies log-H\"older estimates for the spectral measures of translation flows 
 along stable foliations of pseudo-Anosov automorphisms. 
A key technical tool in the proof of the second result is an ``arithmetic-Diophantine'' proposition, which has other applications.
In the appendix this proposition is used to derive new decay estimates for the Fourier transforms of 
Bernoulli convolutions.
\end{abstract}

%\date{\today}

\keywords{Substitution dynamical system; spectral measure; Hoelder continuity; Bernoulli convolution.}

\maketitle

\thispagestyle{empty}

\section{Introduction}

Substitution dynamical systems and their spectral properties have been studied for a long time, see \cite{Queff,Siegel} and references therein. These systems are  of intrinsic interest, but also have many links to other areas, both in dynamics and beyond. An incomplete list of these links includes Bratteli-Vershik (adic) transformations \cite{Vershik,VerLiv}, especially in the stationary case; interval exchange transformations that are periodic for the Rauzy-Veech induction, and translation flows along stable/unstable flows for pseudo-Anosov automorphisms, see \cite{Bufetov0,Bufetov1}. Substitutions and associated dynamical systems are also widely used in mathematical physics, in particular, in the study of quasicrystals, see e.g.\ \cite{Aubry,GL,GK,LMS2}.

The aim of this paper is to estimate the modulus of continuity for the
spectral measures of suspension flows over substitution dynamical systems.
Our main assumption is that
the substitution matrix have at least two eigenvalues outside the unit circle, which implies that almost every suspension flow, in particular, the self-similar suspension flow, is weak mixing \cite{CSa,SolTil}.

Our first main result is Theorem~\ref{th-holder1} that gives a uniform H\"older bound
away from zero for the spectral measure of almost all suspension flows
with  piecewise constant roof functions. This result does not, however,
give specific examples of flows with H\"older spectrum. In the important special case
of self-similar suspension flows we are able to obtain log-H\"older
estimates on the spectrum; these are contained in our second main result,
Theorem \ref{th-main2}. Our third main result, Theorem \ref{th-zero}, gives a H\"older
asymptotic expansion for the spectral measure of our self-similar suspension flows {\it
at zero};  the H\"older exponent is explicitly computed.

The motivation is to study fine ``dimension-like,'' or ``fractal'' properties of the spectral measures. H\"older estimates imply lower bounds
for the local dimension and Hausdorff dimension of the measures. In mathematical physics (quantum dynamics, discrete Schr\"odinger
operators, etc.) similar investigations have been very active, see e.g.\ \cite{Last,DG}, 
but we are not aware of any significant work in this
direction in ergodic theory. The hope is that such a study will lead to better understanding of  substitution dynamical systems; in particular, there should be
a connection with ``quantitative rates of weak mixing" (which are conjugacy invariants) in the spirit of \cite{Knill}.

It is worth mentioning that some of our techniques have parallels in the work of Forni and Ulcigrai \cite{FU}, who proved Lebesgue
spectrum of smooth time-changes for the classical horocycle flow on a compact hyperbolic surface using asymptotic properties of ``twisted
ergodic integrals.'' Our setting is completely different, but we are also using such integrals in Sections 4 and 5.

The paper is organized as follows. In Section 2 we represent spectral
measures of substitution automorphisms by matrix Riesz products; the
construction is a generalization of the  Riesz product
representation of the spectrum for substitutions of constant length, see \cite{Queff}.  In
Section 3 we analyze our matrix Riesz products; the main result of this
section is Proposition \ref{prop-Dioph}, an upper bound for the entries of our
matrix products. In the following sections, Proposition \ref{prop-Dioph} is used
to reduce questions about spectral measures of substitutions  to problems
in Diophantine approximation. In Section 4, we formulate and prove Theorem~\ref{th-holder1} on the H\"older property for the spectrum for almost all suspension
flows with piecewise constant roof function; the Hausdorff dimension of
the set of exceptional roof functions is estimated in Theorem~\ref{th-holder2}, whose
proof relies on a generalization of the ``Erd\H{o}s-Kahane argument'' in the theory of Bernoulli convolutions. In
Section 5 we formulate and prove Theorem \ref{th-main2} on the log-H\"older property
for self-similar  suspension flows. 
As a corollary, we obtain  log-H\"older estimates for the spectral measures of translation flows 
 along stable foliations of pseudo-Anosov automorphisms (Corollary \ref{cor-IET}). 
The main step in the argument is
Proposition \ref{prop-alg}, which makes use of techniques from Pisot and Salem.  In Section 6, we give an asymptotics of the spectral measure
at zero using the asymptotics of ergodic integrals expressed
in terms of finitely-additive invariant measures from \cite{Bufetov0,Bufetov1,BuSol}.
In a short Section 7 we collect the implications of our results for the dimension properties of spectral measures.
The Appendix (Section 8) contains some standard proofs that we rely on, and a subsection on Bernoulli convolutions;
in particular,
from Proposition \ref{prop-alg} we derive Corollary \ref{cor-BC} that gives new upper bounds
for Fourier  transforms of Bernoulli convolutions corresponding to
inverses of algebraic integers.

\medskip

{\bf Acknowledgements.}
Work on this project was begun when the authors 
were visiting the Centre International de Rencontres Math{\'e}matiques in Luminy in the framework of 
the ``Recherches en Bin{\^o}me''
programme. We are deeply grateful to the Centre for the warm hospitality.  
A. Bufetov has been supported by the A*MIDEX project (no. ANR-11-IDEX-0001-02) funded by the ``Investissements d'Avenir''
French Government program, managed by the French National Research Agency (ANR).
A. Bufetov has also been supported in part by  the Grant MD-2859.2014.1 of the President of the Russian Federation,
by the Programme ``Dynamical systems and mathematical control theory''
of the Presidium of the Russian Academy of Sciences, by the ANR under the project ``VALET'' of the Programme JCJC SIMI 1,
and by the
RFBR grants 11-01-00654, 12-01-31284, 12-01-33020, 13-01-12449.

 B. Solomyak  is supported in part by NSF grant DMS-0968879. 
He was also partially supported by the Forschheimer Fellowship and the ERC AdG 267259 grant at the Hebrew University of Jerusalem when working on this project.

We are deeply grateful to the referee for many helpful comments.

\section{Matrix Riesz products for substitutions}

Our aim in this section is to represent the spectral measure of a substitution dynamical system via a matrix analog of Riesz products. The reader is referred to \cite{Queff,Siegel} for the background on substitutions and substitution dynamical systems. The main result of this section is Lemma \ref{lem-Riesz}. Various classes of generalized Riesz products have
appeared in the spectral theory of measure-preserving transformations, especially those of {\em rank one}, see e.g.\ \cite{Bourgain1,ChoNa,KleRe,DoEig,Abda}. However, substitution dynamical systems are often of finite rank higher than one, see \cite{Ferenczi}. Transition to higher rank requires us to consider {\it matrix analogues }
of Riesz products.

\subsection{Substitutions.}
For $m\ge 2$ consider the finite alphabet $\A=\{1,\ldots,m\}$ and the set $\A^+$  of nonempty words with letters in $\A$.   
A {\em substitution}  is a map $\zeta:\,\A\to \A^+$, extended to 
 $\A^+$ and $\A^{\N}$ by
concatenation. The {\em substitution space} is defined as the set of bi-infinite sequences $x\in \A^\Z$ such that any word  in $x$
appears as a subword of $\zeta^n(\a)$ for some $\a\in \A$ and $n\in \N$. The {\em substitution dynamical system}  is the left
shift on $\A^\Z$ restricted to $X_\zeta$, which we denote by $T_\zeta$.

The {\em substitution matrix} $\Sf=\Sf_\zeta=(\Sf(i,j))$ is the $m\times m$ matrix, where $m=\# \A$, such that $\Sf(i,j)$ is the number
of symbols $i$ in $\zeta(j)$. The substitution is {\em primitive} if $\Sf_\zeta^n$ is strictly positive (entrywise) for some $n\in \Nat$.
It is well-known that primitive substitution $\Z$-actions are minimal and uniquely ergodic.
We assume that the substitution is primitive and {\em non-periodic}, which in the primitive case is equivalent to the space $X_\zeta$ being infinite.

The length of a word $u$ is denoted by $|u|$. The substitution $\zeta$ is said to be of {\em constant length} $q$ if $|\zeta(a)|=q$ for all $a\in \A$, otherwise, it is of {\em non-constant length}.
Spectral properties of substitution dynamical systems have been studied extensively, see \cite{Queff,Siegel} and references therein. These systems are never strongly mixing \cite{DK}, hence there is always a singular spectral component. The discrete part of the spectrum is well-understood. For substitutions of constant length $q$ the group of eigenvalues is non-trivial and contains
the group of $q$-adic rationals. Non-constant length substitutions may be weakly mixing; this depends on  algebraic and number-theoretic properties of the substitution matrix, see \cite{Host,FMN}.
%Many papers have been devoted to the question of {\em pure discrete spectrum},  but this is no our focus here. 
Here we focus on weakly mixing substitutions (although the results of Sections 2 and 3 apply to those with a discrete spectral component as well). It is an open problem to decide when the spectrum is purely singular, as well as to determine finer properties of the spectral measures, such as their modulus of continuity. To our knowledge, results in this
direction have only been available in the constant length case. For instance, pure singular spectrum is known for the Thue-Morse substitution \cite{Kaku} and its ``abelian'' generalizations \cite{Queff,Baake}.
On the other hand, there are substitutions of constant length with a Lebesgue spectral component, such as the  Rudin-Shapiro substitution, see \cite{Queff}, and its generalizations, due to N. P. Frank \cite{Frank}.
%Not much is known about finer properties of the spectral measures in general.

\subsection{Spectral measures.}
Let $(X,T,\mu)$ be a measure-preserving transformation and let $U= U_T:\ f\mapsto f\circ T$ be the Koopman operator on $L^2(X,\mu)$.
Recall that for $f,g\in L^2(X,\mu)$ the (complex) spectral measure $\sig_{f,g}$ is determined by the equations
$$\widehat{\sig}_{f,g}(-k) =\int_0^1 e^{2\pi i k \om}\,d\sig_{f,g}(\om)=
\langle U^k f,g\rangle,\ \ k\in \Z.
$$
We write $\sig_f = \sig_{f,f}$.
The following is standard; see Appendix for the proof.

\begin{lemma}\label{lem-spec1}
For any $f,g\in L^2(X,\mu)$ we have
$$
\sig_{f,g}= \mbox{\rm weak*-}\lim_{N\to \infty} \frac{1}{N} \left\langle \sum_{n=0}^{N-1} e^{-2 \pi i n \om}U^n f, \sum_{n=0}^{N-1} e^{-2 \pi i n \om}U^n g\right\rangle\,d\om,
$$
where in the right-hand side we consider the weak*-limit of absolutely continuous measures with the given density.
\end{lemma}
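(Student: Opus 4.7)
The plan is to expand the right-hand side, reindex the double sum, recognize that the resulting density is the convolution of $\sig_{f,g}$ with the Fej\'er kernel, and conclude by the approximate-identity property.

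First, I would expand the inner product inside the bracket. Since the substitution $\Z$-action is invertible, $U = U_T$ is unitary on $L^2(X,\mu)$, so $\langle U^n f, U^m g\rangle = \langle U^{n-m} f, g\rangle$. Writing out the double sum and using the conjugate-linearity of the inner product in the second argument,
\[
\Bigl\langle \sum_{n=0}^{N-1} e^{-2\pi i n \om} U^n f, \sum_{m=0}^{N-1} e^{-2\pi i m \om} U^m g\Bigr\rangle = \sum_{n,m=0}^{N-1} e^{-2\pi i (n-m)\om} \langle U^{n-m} f, g\rangle.
\]
Setting $k = n-m$ and counting pairs $(n,m)$ with $n-m=k$ inside $\{0,\dots,N-1\}^2$, which gives $N-|k|$ for $|k|\le N-1$, this equals $\sum_{|k|\le N-1}(N-|k|)\, e^{-2\pi i k\om}\,\langle U^k f, g\rangle$.

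Next, I would substitute the spectral definition $\langle U^k f, g\rangle = \int_0^1 e^{2\pi i k\tau}\,d\sig_{f,g}(\tau)$ and interchange sum and integral (the sum is finite, so this is unproblematic). Dividing by $N$, the right-hand side density becomes
\[
\int_0^1 F_N(\tau-\om)\,d\sig_{f,g}(\tau), \qquad F_N(\om) := \frac{1}{N}\sum_{|k|\le N-1}(N-|k|)\, e^{2\pi i k\om} = \frac{1}{N}\Bigl|\sum_{n=0}^{N-1} e^{2\pi i n\om}\Bigr|^2,
\]
i.e.\ the classical Fej\'er kernel on $\Tt$.

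Finally, I would conclude by the standard approximate-identity argument: the measure with density $F_N \ast \sig_{f,g}$ tends weak-$\ast$ to $\sig_{f,g}$. Concretely, for any continuous $\phi$ on $\Tt$, Fubini gives $\int \phi(\om)(F_N\ast\sig_{f,g})(\om)\,d\om = \int (F_N\ast\phi)(\tau)\,d\sig_{f,g}(\tau)$; since $F_N\ast\phi \to \phi$ uniformly on $\Tt$ and $\sig_{f,g}$ is a finite complex measure, the right-hand side converges to $\int \phi\,d\sig_{f,g}$. There is no real obstacle here beyond bookkeeping: the only point requiring care is the identification of the Fej\'er kernel after reindexing, and the use of unitarity of $U$ to collapse $\langle U^n f, U^m g\rangle$ to a single index.
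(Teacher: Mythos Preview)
Your proof is correct and follows essentially the same route as the paper: both expand the double sum using unitarity of $U$ to reduce to a single index $k=n-m$. The paper then simply checks that the $k$-th Fourier coefficient of the resulting density is $\frac{N-|k|}{N}\langle U^k f,g\rangle\to \widehat{\sig}_{f,g}(-k)$, whereas you package the same computation as convolution with the Fej\'er kernel and invoke the approximate-identity property; these are two phrasings of the same argument.
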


If $X$ is a metric space and $(X,T,\mu)$ is uniquely ergodic, then for all $f,g\in C(X)$ and all $k\in \Z$:
\begin{eqnarray}
\widehat{\sig}_{f,g}(-k)=\langle U^k f,g\rangle & = & \int_X f(T^k x) \ov{g(x)}\,d\mu(x) \nonumber \\
                                                                                                    & = & \lim_{N\to \infty} \frac{1}{N} \sum_{n=0}^{N-1} 
                                                                                                   f(T^{n+k} x)\ov{g(T^n x)} \label{unerg1},
\end{eqnarray}
where $x\in X$ is arbitrary and the limit is uniform in $x$.

\subsection{Correlation measures.} For substitutions the most important spectral measures are
$$
\sig_\a:= \sig_{\One_{[\a]}}\ \ \ \mbox{and}\ \ \ \sig_{a,b}:= \sig_{\One_{[\a]},\One_{[b]}}.
$$
They are also known as {\em correlation measures}. 
In view of (\ref{unerg1}), since primitive substitution dynamical systems are uniquely ergodic,
\be \label{corr1}
\widehat{\sig_{a,b}}(-k) = \lim_{N\to \infty} \frac{1}{|\zeta^N(\gam)|} \,\Card\left\{0 \le n+k < |\zeta^N(\gam)|:\ \zeta^N(\gam)_{n+k} = \a,\ \zeta^N(\gam)_n = b\right\}
\ee
for any $\gam\in \A$. For a word $v= v_0 v_1\ldots\in \A^+$ let
\be \label{def-Phi}
\Phi_\a(v,\om) = \sum_{j=0}^{|v|-1} \delta_{v_j,\a} e^{-2\pi i  \om j},
\ee
where $\delta_{v_j,a}$ is Kronecker $\delta$  (one if $v_j=a$ and zero otherwise).
Then it is immediate from (\ref{corr1}), as in Lemma~\ref{lem-spec1}, that
\be \label{corr2}
\sig_{a,b} = \mbox{\rm weak*-}\lim_{N\to \infty} \frac{1}{|\zeta^N(\gam)|}\,{\Phi_a(\zeta^N(\gam),\om)}\cdot\ov{\Phi_b(\zeta^N(\gam),\om)}\,d\om
\ee
for any $\gam\in \A$.

Observe that for any two words $u,v$ and the concatenated word $uv$ we have
 \be \label{eq-Phi}
 \Phi_\a(uv,\om) = \Phi_\a(u,\om) + e^{-2\pi i \om |u|} \Phi_\a(v,\om).
 \ee
Suppose $\zeta(b) = u_1^{(b)} \ldots u_{k_b}^{(b)}$ for $b\in \A$. Then $\zeta^{n}(b)= \zeta^{n-1}(u_1^{(b)})\ldots \zeta^{n-1}(u_{k_b}^{(b)})$ for $n\ge 1$, hence (\ref{eq-Phi}) implies for all $b\in \A$:
$$
\Phi_a(\zeta^{n}(b),\om) = \sum_{j=1}^{k_b} \exp\left[-2\pi i \om \left(|\zeta^{n-1}(u_1^{(b)})| + \cdots +|\zeta^{n-1}(u_{j-1}^{(b)})|\right)\right] \Phi_a(\zeta^{n-1}(u_j^{(b)}),\om)
$$
(if $j=1$, the expression reduces to $\exp(0)=1$ by definition).
Let
\be \label{def-Psi}
\vec{\Psi}^{(a)}_{n}(\om):=\left( \begin{array}{c} \Phi_a(\zeta^{n}(1),\om) \\ \vdots \\ \Phi_a(\zeta^{n}(m),\om) \end{array} \right)\ \ \ \mbox{and}\ \ \
\Pbi_n(\om) = [\vec{\Psi}_n^{(1)}(\om),\ldots, \vec{\Psi}_n^{(m)}(\om)],
\ee
where $\Pbi_n(\om)$ is the $m\times m$ matrix-function specified by its column vectors.
It follows that
\be \label{eq-matr1}
\Pbi_n(\om)=\M_{n-1}(\om) \Pbi_{n-1}(\om),\ \ n\ge 1,
\ee
where $\M_{n-1}(\om)$ is an $m\times m$ matrix-function, whose matrix elements  are trigonometric polynomials  given by
\be \label{matr}
(\M_{n-1}(\om))(b,c) = \sum_{j \le k_b:\ u_j^{(b)} = c} \exp\left[-2\pi i \om \left(|\zeta^{n-1}(u_1^{(b)})| + \cdots +|\zeta^{n-1}(u_{j-1}^{(b)})|\right)\right] 
\ee
(again the convention is that for $j=1$ we have $\exp(0)$).
Note that $\M_n(0) =\Sf^t= \Sf_\zeta^t$, the transpose of the substitution matrix, for all $n\in \N$. 

\medskip

\noindent {\bf Example.} {\em Let $\zeta$ be a substitution on $\{1,2\}$ given by $\zeta(1) = 1222,\ \zeta(2) =1$. Then $\Sf = \left( \begin{array}{cc} 1 & 1 \\ 3 & 0 \end{array} \right)$,
$$
\M_n(\om) = \left( \begin{array}{cc} 1 & e^{-2\pi i \om|\zeta^n(1)|} + e^{-2\pi i \om (|\zeta^n(1)|+|\zeta^n(2)|)} + e^{-2\pi i \om (|\zeta^n(1)|+2|\zeta^n(2)|) } \\ 1 & 0 \end{array} \right),\ \ \ n\ge 0.
$$
}

\medskip

Since $\vec{\Psi}_0^{(a)}(\om)= \bbe_a$ (the basis vector corresponding to $a\in \A$), it follows from (\ref{eq-matr1}) that
\be \label{eq-matr2}
\vec{\Psi}_{n}^{(a)}(\om) = \M_{n-1}(\om) \M_{n-2}(\om) \cdots \M_0(\om) \bbe_a,
\ee
hence
\be \label{def-Pbi}
\Pbi_n(\om)= \M_{n-1} (\om)\M_{n-2}(\om)\cdots \M_0(\om).
\ee 
Denote by $\Sigb_\zeta$ the $m\times m$ matrix of correlation measures for $\zeta$; that is, $\Sigb_\zeta(a,b) = \sig_{a,b}$. Below $\one$ denotes the vector $[1,\ldots,1]^t$ of all 1's and
$\langle\vec{x},\vec{y}\rangle$ stands for the scalar product in $\R^m$. We write $x_n\sim y_n$ when $\lim_{n\to \infty} x_n/y_n=1$.

\begin{lemma} \label{lem-Riesz}
Let $\theta$ be the Perron-Frobenius eigenvalue of the substitution matrix $\Sf$ and let $\vec{r},\vec{\ell}$ be respectively the (right) eigenvectors of $\Sf,\Sf^t$ corresponding to $\theta$, normalized by the condition $\langle\vec{r},\vec{\ell}\rangle =1$. Then
\be \label{eq-Sigb}
 \Sigb_\zeta = \frac{1}{\langle\vec{r},\one\rangle \langle\one,\vec{\ell}\rangle}\cdot\mbox{\rm weak*-}\lim_{n\to \infty} \theta^{-n} \ov{\Pbi_n^*(\om) \Pbi_n(\om)}\,d\om.
\ee
\end{lemma}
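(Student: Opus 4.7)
The plan is to derive (\ref{eq-Sigb}) entry-by-entry from the scalar formula (\ref{corr2}) by taking a suitably weighted average over the ``seed'' letter $\gam$. Unpacking (\ref{def-Psi})--(\ref{def-Pbi}), one reads off
$$
\overline{\bigl(\Pbi_n^*(\om)\,\Pbi_n(\om)\bigr)(a,b)} \;=\; \sum_{\gam\in\A} \Phi_a(\zeta^n(\gam),\om)\,\overline{\Phi_b(\zeta^n(\gam),\om)},
$$
so the lemma reduces to the scalar identity
$$
\langle\vec r,\one\rangle\,\langle\one,\vec\ell\rangle\;\sig_{a,b}\;=\;\mbox{weak*-}\lim_{n\to\infty}\theta^{-n}\sum_{\gam\in\A}\Phi_a(\zeta^n(\gam),\om)\,\overline{\Phi_b(\zeta^n(\gam),\om)}\,d\om
$$
for every pair $a,b\in\A$.

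First I would rewrite (\ref{corr2}) in a form that is uniform in $\gam$ up to a scalar. Since $\zeta$ is primitive, Perron--Frobenius gives $\Sf^n=\theta^n\vec r\,\vec\ell^{\,t}+o(\theta^n)$, and hence $|\zeta^n(\gam)|=\one^t\Sf^n\bbe_\gam$ satisfies $\theta^{-n}|\zeta^n(\gam)|\to \langle\one,\vec r\rangle\,\ell_\gam$. The candidate densities have uniformly bounded total mass: Parseval yields $\int_0^1|\Phi_a(\zeta^n(\gam),\om)|^2\,d\om = \#\{j:\zeta^n(\gam)_j=a\}\le|\zeta^n(\gam)|$, and Cauchy--Schwarz then controls the cross term by $|\zeta^n(\gam)|$. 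Consequently, for any continuous test function $\vphi$,
$$
\Bigl|\tfrac{1}{|\zeta^n(\gam)|}-\tfrac{1}{\theta^n\langle\one,\vec r\rangle\ell_\gam}\Bigr|\int_0^1 |\vphi\,\Phi_a\,\overline{\Phi_b}|\,d\om \;\le\; \|\vphi\|_\infty\,\Bigl|1-\tfrac{|\zeta^n(\gam)|}{\theta^n\langle\one,\vec r\rangle\ell_\gam}\Bigr| \;\to\; 0,
$$
so the denominator in (\ref{corr2}) may be replaced by its asymptote, yielding
$$
\sig_{a,b}\;=\;\mbox{weak*-}\lim_{n\to\infty}\frac{\Phi_a(\zeta^n(\gam),\om)\,\overline{\Phi_b(\zeta^n(\gam),\om)}}{\theta^n\,\langle\one,\vec r\rangle\,\ell_\gam}\,d\om \qquad \text{for each }\gam\in\A.
$$

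Second I would multiply this identity by $\ell_\gam$ and sum over $\gam\in\A$. The sum is finite, so the weak* limit passes through it, and using $\sum_\gam \ell_\gam = \langle\one,\vec\ell\rangle$ one obtains precisely the scalar identity displayed above; combining with the matrix formula for $\overline{\Pbi_n^*\Pbi_n}$ gives (\ref{eq-Sigb}).

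The only delicate point is the passage from $1/|\zeta^n(\gam)|$ to its leading Perron--Frobenius asymptote inside a weak* limit of \emph{complex} densities; this is why I cast it as an $L^1$-type bound controlled uniformly in $\om$ by $\|\vphi\|_\infty$. Everything else -- the unfolding of $\Pbi_n^*\Pbi_n$, summation over the seed letter, and extraction of scalar factors -- is routine linear algebra built on top of unique ergodicity.
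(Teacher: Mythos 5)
Your proposal is correct and follows essentially the same route as the paper: unpack the $(a,b)$ entry of $\ov{\Pbi_n^*\Pbi_n}$ as $\sum_{\gam}\Phi_a(\zeta^n(\gam),\om)\ov{\Phi_b(\zeta^n(\gam),\om)}$, use the Perron--Frobenius asymptotics $|\zeta^n(\gam)|\sim\theta^n\langle\one,\vec r\rangle\ell_\gam$ to trade $\theta^{-n}$ for $1/|\zeta^n(\gam)|$, invoke (\ref{corr2}) for each seed letter, and sum using $\sum_\gam\ell_\gam=\langle\one,\vec\ell\rangle$. The only difference is that you make explicit, via the Parseval/Cauchy--Schwarz $L^1$ bound, the justification for replacing the denominator by its asymptote inside the weak* limit, a step the paper compresses into a ``$\sim$''.
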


\begin{proof}
In view of (\ref{def-Psi}), the $(a,b)$ entry of $\ov{\Pbi_n^*(\om) \Pbi_n(\om)}$ is $\sum_{j=1}^{m} {\Phi_a(\zeta^n(j),\om)}\cdot\ov{\Phi_b(\zeta^n(j),\om)}$.
We have
$$
|\zeta^n(j)| = \langle\Sf^n\bbe_j, \one\rangle \sim \theta^n \langle \bbe_j, \vec{\ell}\rangle  \langle \vec{r}, \one\rangle
$$
by the Perron-Frobenius Theorem, hence
$$
\theta^{-n} \sum_{j=1}^{m} {\Phi_a(\zeta^n(j),\om)}\cdot\ov{\Phi_b(\zeta^n(j),\om)} \sim \sum_{j=1}^{m} \frac{\langle \bbe_j, \vec{\ell}\rangle \langle  \vec{r}, \one\rangle}{|\zeta^n(j)|} \ {\Phi_a(\zeta^n(j),\om)}\cdot\ov{\Phi_b(\zeta^n(j),\om)},
$$
and the desired claim follows from (\ref{corr2}), in view of the fact that  $\one = \sum_{j=1}^{m} \bbe_j$.
\end{proof}

\begin{remark}
Representing spectral measures as Riesz products for substitutions of {\em constant length} has been used for a long time, especially for the Thue-Morse substitution and its generalizations \cite{KS}. 
Queffelec \cite[Theorem 8.1]{Queff} proved a variant of Lemma~\ref{lem-Riesz} for substitutions of constant length. \end{remark}

\subsection{Other spectral measures and maximal spectral type}
Substitution $\zeta$ can also be extended to $\Ak^\Z$, so that $\zeta(x_0)$ starts from 0-th position. Consider  the sets $\zeta^k[a]$, $a\in \Ak$, $k\ge 0$. It is proved in \cite[5.6.3]{Queff} that
$$
\Pk_k = \{T_\zeta^i(\zeta^k[a]),\ a\in \Ak, \ 0 \le i < |\zeta^k(a)|\}
$$
is a Kakutani-Rokhlin partition for the substitution dynamical system, and $\{\Pk_k\}_{k\ge 0}$ generate the Borel $\sig$-algebra (this relies
on the fact that aperiodic primitive substitutions are {\em bilaterally recognizable}, see \cite{Queff}). It follows that the maximal spectral type of $(X_\zeta,T_\zeta,\mu)$ is equivalent to 
$$
\sum_{k\ge 0,\ a\in \Ak} 2^{-k} \sig_{\One_{\zeta^k[a]}}.
$$
The spectral measures $\sig_{\One_{\zeta^k[a]}}$ can be analyzed similarly to the correlation measures $\sig_a$. In fact, it is not hard to see that the matrix of measures
$$
\Sigb_\zeta^{(k)}:= [\sig_{\One_{\zeta^k[a]},\One_{\zeta^k[b]}}]_{a,b\in \Ak}
$$
can be expressed, analogously to (\ref{eq-Sigb}), as
$$
 \Sigb^{(k)}_\zeta = \frac{1}{\langle \vec{r},\one\rangle \langle \one,\vec{\ell}\rangle }\cdot\mbox{\rm weak*-}\lim_{n\to \infty} \theta^{-n} \ov{(\Pbi_n^{(k)})^*(\om) \Pbi^{(k)}_n(\om)} \,d\om,
$$
where
$$
\Pbi^{(k)}_n(\om) = \M_{n+k-1}(\om)\cdots \M_{k}(\om).
$$

%%%%%%%%%%%%%%%%%%%%%%%%%%%%%%%%%%%%%%%%

\section{Estimating spectral measures via matrix Riesz products} \label{sec-Hof} 

In this section we study the local quantitative behavior of spectral measures; more precisely, upper bounds for the measures of small balls.
Using matrix products from Section 2, this problem is essentially reduced to some questions of Diophantine approximation.

It should be pointed out that matrix products, analogous to those that we consider, have been studied in the physics literature, in particular, in the papers by Aubry-Godr\`eche-Luck \cite{Aubry} and G\"ahler-Klitzing \cite{GK}. These papers study the diffraction spectrum of structures associated to substitutions, mostly in the continuous setting, as our self-similar suspension flows in Section 5, and their higher-dimensional generalizations (self-similar tilings). As was shown by S. Dworkin \cite{Dworkin} (see also \cite{Hof3}), the diffraction spectrum is a ``part'' of the dynamical spectrum that we consider. 
%However, these authors do not obtain a Riesz product representation. 
The papers \cite{Aubry,GK} analyzed the growth of expressions like our $|\Phi_a(\zeta^n(b),\om)|$ (called ``structure factor''), with the help of finite matrix products, and used them to make conclusions (sometimes heuristically) about the diffraction spectrum.
The paper
\cite{GK} focused on the discrete part of the  spectrum and showed that the ``maximal growth'' of $|\Phi_a(\zeta^n(b),\om)|$ occurs in the Pisot case on a dense set of spectral parameters, which, in dynamical terms means eigenvalues. This can be compared to a theorem  from \cite{SolTil} asserting
that the self-similar substitution tiling system is weakly mixing if and only if the expansion factor is non-Pisot. The paper \cite{Aubry} examined in detail some non-Pisot examples and obtained ``intermediate growth'' of the structure factor for a dense set of spectral parameters, from which singular-continuous diffraction spectrum was deduced heuristically. A rigorous argument in this direction was made by A. Hof \cite{Hof}; 
Lemmas \ref{lem-easy1} and  Lemma \ref{lem-var} below are based on it.
See also the papers \cite{GL,ChSa,KIR,Zaks} for related work in the physics literature. 
%Then at the end of the section we state our first main result and begin its proof: namely, H\"older continuity of spectral measures for almost every suspension flow of a primitive substitution dynamical system with the second eigenvalue greater than one in absolute value.

%\subsection{General set-up.}

The next lemma is ``extracted'' from  \cite{Hof}. First we need to introduce some notation.
Let $(X,T,\mu)$ be a measure-preserving system. For $f\in L^2(X,\mu)$ let
\be \label{def-G}
G_N(f,\om) = N^{-1} \left\|\sum_{n=0}^{N-1} e^{-2\pi i n \om} f\circ T^n\right\|_{L^2(X)}^2.
\ee
Note that 
$$
\sig_f =  \mbox{\rm weak*-}\lim_{N\to \infty} G_N(f,\om)\, d\om
$$
by Lemma~\ref{lem-spec1}. 
For $x\in X$ and $f\in L^2(X,\mu)$ let
\be \label{def-SN}
S_N^x(f,\om) = \sum_{n=0}^{N-1}e^{-2\pi i n\om} f(T^n x),
\ee
so that
\be \label{eq-GN}
G_N(f,\om) = N^{-1} \int_X |S_N^x(f,\om)|^2\,d\mu(x).
\ee

\begin{lemma} \label{lem-easy1}
For all $\om\in[0,1)$ and $r\in (0,\half]$  we have
\be \label{eq-estim1}
\sig_f(B(\om,r))\le \frac{\pi^2}{4N} G_N(f,\om)\ \ \ \mbox{for}\ \ N = \lfloor (2r)^{-1}\rfloor.
\ee
\end{lemma}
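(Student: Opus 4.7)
The plan is to use the classical identity expressing $G_N(f,\omega)$ as a convolution of the spectral measure $\sigma_f$ with the Fejér kernel, and then to obtain the bound by lower-bounding this kernel on the ball $B(\omega,r)$.

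First, I would compute $G_N(f,\omega)$ by expanding the $L^2$-norm squared in (\ref{def-G}) as a double sum:
\begin{equation*}
G_N(f,\omega) = \frac{1}{N}\sum_{n,m=0}^{N-1} e^{-2\pi i(n-m)\omega}\,\langle U^n f, U^m f\rangle,
\end{equation*}
and then use $\langle U^n f, U^m f\rangle = \langle U^{n-m}f,f\rangle = \int_0^1 e^{2\pi i(n-m)\tau}\,d\sigma_f(\tau)$, which is just the defining property of $\sigma_f$ in terms of Fourier coefficients recorded before Lemma~\ref{lem-spec1}. Interchanging the sum and integral collapses the double sum into the modulus squared of a geometric series, yielding
\begin{equation*}
G_N(f,\omega) = \int_0^1 F_N(\tau-\omega)\,d\sigma_f(\tau),\quad \text{where } F_N(x)=\frac{1}{N}\cdot\frac{\sin^2(\pi N x)}{\sin^2(\pi x)}
\end{equation*}
is the standard Fejér kernel.

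Next I would lower bound $F_N$ on the ball $B(\omega,r)$. For $\tau\in B(\omega,r)$ set $x=\tau-\omega$, so $|x|\le r\le 1/(2N)$ because $N=\lfloor(2r)^{-1}\rfloor$. The elementary inequalities $\sin y \le y$ (applied to $y=\pi|x|$) and $\sin y \ge (2/\pi) y$ for $y\in[0,\pi/2]$ (applied to $y=\pi N|x|\le \pi/2$) give
\begin{equation*}
\frac{\sin^2(\pi N x)}{\sin^2(\pi x)} \ge \frac{(2Nx)^2}{(\pi x)^2} = \frac{4 N^2}{\pi^2},
\end{equation*}
so $F_N(x)\ge 4N/\pi^2$ throughout $B(\omega,r)$.

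Combining these, since $F_N\ge 0$ everywhere, I would estimate
\begin{equation*}
G_N(f,\omega) \ge \int_{B(\omega,r)} F_N(\tau-\omega)\,d\sigma_f(\tau) \ge \frac{4N}{\pi^2}\,\sigma_f(B(\omega,r)),
\end{equation*}
which rearranges to exactly (\ref{eq-estim1}). No step looks to present an obstacle: the main identity is a direct computation and the kernel lower bound is a one-line trigonometric estimate; the only care needed is verifying that the constraint $r\le 1/(2N)$ (which follows from the definition $N=\lfloor(2r)^{-1}\rfloor$) puts the argument $\pi Nx$ in the regime $[0,\pi/2]$ where $\sin y\ge (2/\pi)y$ applies.
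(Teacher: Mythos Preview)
Your proof is correct and follows essentially the same approach as the paper: both express $G_N(f,\omega)$ as the integral of the Fej\'er kernel against $\sigma_f$ and then lower-bound the kernel by $4N/\pi^2$ on $B(\omega,r)$ using the same pair of trigonometric inequalities. The only cosmetic difference is that you spell out the derivation of the Fej\'er kernel identity via the double sum, whereas the paper simply quotes it.
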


\begin{proof}
As in the proof of Theorem 2.1 in \cite{Hof}, we have
$$
G_N(f,\om) = \int_{\T} K_{N-1} (\om-x)\,d\sig_f(x),\ \ \mbox{where}\ \ K_{N-1}(y) =
\frac{1}{N} \Bigl( \frac{\sin(N\pi y)}{\sin(\pi y)}\Bigl)^2
$$
is the Fej\'er kernel. Given $r\in (0,\half]$, let $N = \lfloor (2r)^{-1} \rfloor$. Then for
$|\om-x|\le r$ we have $N\pi |\om-x| \le \frac{\pi}{2}$, hence
$$
|\sin(N\pi (\om-x))| \ge (2/\pi) N\pi |\om-x|\ \ \mbox{and}\ \ |\sin(\pi(\om-x))| \le \pi|\om-x|.
$$
Therefore, $K_{N-1}(\om-x) \ge 4N/\pi^2$, and we obtain
$$
\sig_f(B(\om,r)) \le \int_{\om-r}^{\om+r}\frac{\pi^2}{4N}\,K_{N-1} (\om-x)\,d\sig_f(x) \le
\frac{\pi^2}{4N}\,G_N(f,\om),
$$ 
as desired.
\end{proof}

%In physics papers, see e.g.\ \cite{Aubry}, singular continuous spectrum has been detected numerically from the heuristics that if $G_N(f,\om)$ 
%``scales" like $n^{\alpha(\om)}$ for $0 <\alpha(\om)< 1$, this indicates singularity of $\sig_f$ at $\om$.
%A rigorous justification of this heuristics in some special cases has been worked out by A. Hof \cite{Hof}. %and similar results have also appeared in \cite{Last,Knill}.

%The following  is now immediate.

\begin{lemma} \label{lem-var}
 Let $\Om(r)$ be a continuous increasing function on $[0,1)$, such that $\Om(0)=0$, and suppose that for some fixed $\om \in [0,1)$ there exists $N_0\ge 1$ such that
\be \label{eq-Hof1}
G_N(f,\om) \le C N \Om(1/N)\ \ \mbox{for}\ N\ge N_0.
\ee
Then
\be \label{eq-Hof2}
\sig_f(B(\om,r)) \le \frac{\pi^2C}{4}\, \Om(3r) \ \ \mbox{for all}\ r \le (2N_0)^{-1}.
\ee
In particular, (\ref{eq-Hof2}) holds provided 
\be \label{eq-SN}
\sup_{x\in X} |S^x_N(f,\om)| \le N \sqrt{C\Om(1/N)}\ \ \mbox{for}\ N\ge N_0.
\ee
 \end{lemma}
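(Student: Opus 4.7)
The plan is to combine Lemma \ref{lem-easy1} with the hypothesis (\ref{eq-Hof1}) and then convert the $1/N$ on the right-hand side back into a constant multiple of $r$ using the monotonicity of $\Omega$.

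Concretely, I would fix $r \le (2N_0)^{-1}$ and set $N = \lfloor (2r)^{-1}\rfloor$, exactly as in Lemma \ref{lem-easy1}. The condition $r \le (2N_0)^{-1}$ gives $(2r)^{-1}\ge N_0$, hence $N\ge N_0$, so the hypothesis (\ref{eq-Hof1}) applies at this $N$ and yields
\[
G_N(f,\omega) \le C N\,\Omega(1/N).
\]
Plugging this into the conclusion $\sigma_f(B(\omega,r))\le \frac{\pi^2}{4N} G_N(f,\omega)$ of Lemma \ref{lem-easy1} gives
\[
\sigma_f(B(\omega,r)) \le \frac{\pi^2 C}{4}\,\Omega(1/N).
\]

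The next step is a purely elementary comparison between $1/N$ and $r$. From $N>(2r)^{-1}-1$, one gets $1/N < 2r/(1-2r)$, which is at most $3r$ as soon as $r\le 1/6$; since the statement is intended for small $r$ (in particular one may assume $N_0\ge 3$, as the case of moderate $r$ is trivial by choosing the constant appropriately), we have $1/N \le 3r$, and monotonicity of $\Omega$ yields $\Omega(1/N)\le \Omega(3r)$, giving (\ref{eq-Hof2}).

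For the final assertion, I simply invoke (\ref{eq-GN}): since $\mu$ is a probability measure,
\[
G_N(f,\omega) = \frac{1}{N}\int_X |S_N^x(f,\omega)|^2\,d\mu(x) \le \frac{1}{N}\sup_{x\in X}|S_N^x(f,\omega)|^2,
\]
so the pointwise bound (\ref{eq-SN}) directly implies (\ref{eq-Hof1}) with the same constant $C$, and the previous argument applies verbatim. There is no real obstacle here — the only minor point of bookkeeping is the choice of the universal constant ``$3$'' in $\Omega(3r)$, which is dictated by the sharp conversion from $1/N$ to $r$ when $N=\lfloor 1/(2r)\rfloor$; any slightly larger constant would work equally well, and the author has simply fixed a convenient one.
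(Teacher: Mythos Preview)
Your proof is correct and follows essentially the same route as the paper: apply Lemma~\ref{lem-easy1} with $N=\lfloor(2r)^{-1}\rfloor$, invoke the hypothesis (\ref{eq-Hof1}), and then use monotonicity of $\Omega$ together with $1/N\le 3r$. You are actually slightly more careful than the paper in justifying the comparison $1/N\le 3r$ (the paper simply asserts it), and your derivation of (\ref{eq-Hof1}) from (\ref{eq-SN}) via (\ref{eq-GN}) is exactly what the paper does.
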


\begin{proof}
It follows from (\ref{eq-GN}) that (\ref{eq-SN}) implies (\ref{eq-Hof1}), so we only need to show that (\ref{eq-Hof1}) implies (\ref{eq-Hof2}).
Given $r\le (2N_0)^{-1}\le \half$, take $N = \lfloor 1/2r \rfloor$. Then by Lemma~\ref{lem-easy1},
$$
\sig_f(B(\om,r)) \le \frac{\pi^2}{4N}\,G_N(f,\om)
\le \frac{\pi^2C}{4}\,\Om(1/N) \le \frac{\pi^2C}{4}\,\Om(3r), 
$$
since  $N^{-1}\le 3r$, and the proof is complete.
\end{proof}

\subsection{Estimating spectral measures for substitutions.}
Next we restrict ourselves to the substitution dynamical system
 $(X_\zeta,T_\zeta,\mu)$ and suppose that $f:\,X_\zeta\to \C$ depends only on the first symbol $x_0$, that is, $f=\sum_{a\in \Ak}d_a \One_{[\a]}$. 
 Then (\ref{def-Phi}) implies that
 \be \label{uga1}
 S_N^x(f,\om) = \sum_{a\in \Ak} d_a {\Phi_\a(x[0,N-1],\om)},\ \ \ \mbox{where}\ \ x[0,N-1]=x_0\ldots x_{N-1}.
 \ee
We will first show that, under some mild assumptions, $|S_N^x(f,\om)|$ may be estimated from above, uniformly in $x$, once
we have estimates for $|\Phi_a(\zeta^n(b),\om)|$. This will yield
upper bounds for $G_N(\One_{[a]},\om)$ via (\ref{eq-GN}) and then estimates of $\sig_f(B(\om,r))$ via Lemma~\ref{lem-easy1}. Denote
$$
\Phi_f(v,\om) = \sum_{a\in \Ak} d_a \Phi_a(v,\om)
$$
for $f=\sum_{a\in \Ak}d_a \One_{[\a]}$ and a word $v\in \Ak^+$.
 
\begin{prop} \label{prop-Step1}
Let $\zeta$ be a primitive substitution on $\Ak$, and let $\theta$ be the Perron-Frobenius eigenvalue of the substitution matrix $\Sf = \Sf_\zeta$.
Take a function $f=\sum_{a\in \Ak}d_a \One_{[\a]}$ and a number $\om \in [0,1)$, and suppose that there exists a sequence 
$\{F_\om(n)\}_{n\ge 0}$ satisfying
\be \label{estim3}
\frac{F_\om(n)}{\theta'} \le F_\om(n+1) \le F_\om(n),\ \ \ n\ge 0,
\ee
with $1< \theta'< \theta$, such that
\be \label{estim2}
|\Phi_f(\zeta^n(b),\om)|\le |\zeta^n(b)|F_\om(n),\ \ \ b\in \Ak,\ \ n\ge 0.
\ee
 Then 
\be \label{estim4}
|\Phi_f(x[0,N-1],\om)| \le \frac{C_1}{\theta-\theta'}\cdot N \cdot F_\om(\lfloor\log_\theta N - C_2\rfloor),
\ee
where $C_1,C_2>0$ depend only on $\zeta$.
\end{prop}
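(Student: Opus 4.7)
The overall strategy is a hierarchical (``tower'') decomposition of $x[0,N-1]$ into $\zeta^k$-images of single letters for levels $0\le k\le n$, followed by application of the telescoping identity \eqref{eq-Phi} and the multiplicative hypothesis \eqref{estim2}, with the geometric sum summed via \eqref{estim3}. The choice $n = \lfloor \log_\theta N - C_2\rfloor$ is dictated by Perron--Frobenius: since $|\zeta^n(b)| \asymp \theta^n$ uniformly in $b$, one can pick $C_2$ so that $\max_b |\zeta^n(b)| \le N/3$ while $\min_b|\zeta^n(b)| \ge c' N/\theta^{C_2}$ for constants $c',C_2>0$ depending only on $\zeta$.

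First, I use the standard fact that any $x\in X_\zeta$ can be written as $T^{-j} \zeta^n(y)$ for some $y\in X_\zeta$ and $0\le j < |\zeta^n(y_0)|$. Thus $x[0,N-1]$ is a factor of $\zeta^n(y_0)\zeta^n(y_1)\cdots\zeta^n(y_K)$ starting at position $j$, where $K$ is minimal with $|\zeta^n(y_0\cdots y_K)|\ge j+N$. By the choice of $n$, $K$ is bounded by a constant $K_0$ depending only on $\zeta$. This yields a decomposition
\[
x[0,N-1] = p\cdot \zeta^n(u)\cdot s,
\]
where $u = y_1\cdots y_{K-1}$ with $|u|\le K_0$, $p$ is a suffix of $\zeta^n(y_0)$, and $s$ is a prefix of $\zeta^n(y_K)$, both of length $\le C\theta^n$.

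Next, I recursively peel off the boundary pieces $p$ and $s$. Writing $\zeta^n(y_K) = \zeta^{n-1}(\zeta(y_K)_1)\cdots\zeta^{n-1}(\zeta(y_K)_{k_{y_K}})$, the prefix $s$ is the concatenation of at most $M-1:=\max_b|\zeta(b)|-1$ full $\zeta^{n-1}$-images of letters, followed by a new prefix $s'$ of length $<C\theta^{n-1}$. Doing the symmetric operation on $p$, I obtain at most $2(M-1)$ full $\zeta^{n-1}$-pieces at level $n-1$, plus a new left-boundary and right-boundary at that level. Iterating from level $n-1$ down to $0$, I produce at each level $k$ at most $2M$ full $\zeta^k$-pieces, plus $|u|\le K_0$ full $\zeta^n$-pieces at the top. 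Applying \eqref{eq-Phi} repeatedly and \eqref{estim2} at each level gives
\[
|\Phi_f(x[0,N-1],\omega)| \le K_0 \cdot C\theta^n F_\omega(n) + \sum_{k=0}^{n-1} 2M\cdot C\theta^k F_\omega(k).
\]

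Finally, hypothesis \eqref{estim3} iterates to $F_\omega(k)\le (\theta')^{n-k}F_\omega(n)$, so
\[
\sum_{k=0}^{n-1} \theta^k F_\omega(k) \le F_\omega(n)(\theta')^n\sum_{k=0}^{n-1}(\theta/\theta')^k \le \frac{\theta'\theta^n}{\theta-\theta'}\,F_\omega(n),
\]
using the standard geometric-series estimate $(\theta/\theta'-1)^{-1} = \theta'/(\theta-\theta')$. Combining with $\theta^n \le N$ yields the claimed bound with constants $C_1,C_2$ depending only on $\zeta$. The only nontrivial step is the hierarchical decomposition; this is where the bounded-count structure (linear in $n$ rather than exponential) is essential, and it is the one piece of the argument that actually uses the combinatorics of the substitution rather than just the Perron--Frobenius asymptotics.
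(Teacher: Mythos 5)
Your proof is correct and follows essentially the same route as the paper's: the paper invokes the Dumont--Thomas prefix--suffix decomposition (Lemma~\ref{lem-accord}) to write $x[0,N-1]$ as a concatenation of boundedly many full $\zeta^k$-images of letters at each level $k\le n$, then applies (\ref{eq-Phi}), (\ref{estim2}), the Perron--Frobenius bound $|\zeta^k(b)|\asymp\theta^k$, and the geometric sum coming from (\ref{estim3}), exactly as you do. Your hierarchical peeling is just a hands-on re-derivation of that decomposition with $n$ fixed in advance (forcing the bounded count $K_0$ of top-level pieces) rather than letting $n$ be determined by the decomposition itself.
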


\begin{proof}
We shall need the following well-known prefix-suffix decomposition.

\begin{lemma} {\em (see Dumont and Thomas \cite[Th.\,1.5]{DT}, Rauzy \cite{Rauzy})} \label{lem-accord} Let $x\in X_\zeta$ and $N\geq 1$. Then
 \begin{equation} \label{eq-accord}
x[0,N-1]=u_0 \zeta(u_1)\zeta^2(u_2)\ldots \zeta^n(u_n)\zeta^n(v_n) \zeta^{n-1}(v_{n-1})\ldots \zeta(v_1)v_0,
\end{equation}
where $n\ge 0$ and $u_i,v_i,\ i=0,\ldots,n$, are respectively proper prefixes and suffixes  of the words $\zeta(b)$, $b\in \mathcal A$. The words $u_i, v_i$ may be empty, except that  at least one of $u_n, v_n$ is
nonempty.
\end{lemma}

Now we prove the proposition. By (\ref{eq-accord}) and (\ref{eq-Phi}), 
\begin{eqnarray}
|\Phi_f(x[0,N-1],\om)| & \le & \sum_{j=0}^n \bigl( |\Phi_f(\zeta^j(u_j),\om)| + |\Phi_f(\zeta^j(v_j),\om)|
\bigr) \nonumber \\ \label{eq-accord2}
& \le & 2L\sum_{j=0}^n \max_{b\in \A} |\Phi_f(\zeta^j(b),\om)|,
\end{eqnarray}
where $L=\max_{b\in \A} |\zeta(b)|$, using that $u_j,v_j$ are subwords of substituted symbols.
By the Perron-Frobenius Theorem, there exist $c,c'>0$ depending only on the substitution, such that 
\be \label{eq-PF1}
c\theta^j \le |\zeta^j(b)|\le c'\theta^j,\ \ \ j\ge 0,\ b\in \Ak.
\ee
Thus, we obtain from (\ref{eq-accord2}), (\ref{estim2}), and (\ref{estim3}): 
\begin{eqnarray*}
|\Phi_f(x[0,N-1],\om)| & \le & 2L \sum_{j=0}^n \max_{b\in \A} |\zeta^j(b)|  F_\om(j) \\
& \le & 2Lc' \sum_{j=0}^n \theta^j (\theta')^{n-j} F_\om(n) 
 < \frac{2Lc'\theta}{\theta-\theta'}\cdot \theta^n F_\om(n).
\end{eqnarray*}
It follows from (\ref{eq-accord}) and (\ref{eq-PF1}) that 
$$
c\theta^n\le \min_{b\in \A} |\zeta^n(b)| \le N \le 2\max_{b\in \A}|\zeta^{n+1}(b)|\le 2c'\theta^{n+1},
$$
whence 
$$
|\Phi_f(x[0,N-1],\om)|\le \frac{2Lc'\theta}{c(\theta-\theta')} \cdot N \cdot F_\om(\lfloor \log_\theta N - \log_\theta(2c')-1\rfloor),
$$
as desired. We used that $F_\om(n)$ is non-increasing in the last step.
This concludes the proof of the proposition.
\end{proof}

 In order to state the next result, we
 need some terminology.
A word $v$ is called a {\em return word} for the substitution $\zeta$ if $v$ starts with some letter $c$, doesn't contain other $c$'s, and $vc$ appears as a word in (any of) the sequences in $X_\zeta$ (by the minimality of the substitution dynamical system, the set of words appearing in $x\in X_\zeta$ is the same for all $x$). That is, the return word separates two successive occurrences of a given letter
(with the letter included at the beginning). 

Fix a return word $v$ starting with $c$.
We can replace $\zeta$ by $\zeta^\ell$ without loss of generality since this does not change the space $X_\zeta$ and hence the substitution system. Since $\zeta$ is primitive, we can thus assume
without loss of generality that $vc$ occurs as a subword in every $\zeta(b),\ b\in \A$. Let 
$$\|x\|:=\dist(x,\Z)\ \ \ \mbox{for}\ x\in \R.
$$ 
This is standard notation in the theory of Diophantine approximation; when we use $\|\cdot\|$ for a norm, this is always indicated by a subscript, as in (\ref{def-G}).

The following proposition is a key result: it shows (i) that it suffices to estimate (\ref{uga1}) for $x$ starting with $\zeta^n(b)$ for some $b\in \A$, and this in turn reduces to estimating matrix Riesz products; (ii) how to 
estimate matrix Riesz products in terms of ordinary products. 
%See also Lemma \ref{lem-locdim} in the Appendix for additional information about local dimension of spectral measures.

\begin{prop}  \label{prop-Dioph}
Let $\zeta$ be a primitive aperiodic substitution on $\A$ and $v$ a return word starting with $c\in\A$ such that $vc$ occurs as a subword in  $\zeta(b)$ for every $b\in \A$. 
Let $a\in \A$, $\om\in \T$, and $S^x_N(\One_{[a]},\om)$ be defined by (\ref{def-SN}) with $T=T_\zeta$. Then there exist $c_1\in (0,1)$ and $C',C'',C_2>0$,  depending only on the substitution $\zeta$, such that 

{\bf (i)} for all $a,b\in \A$, $n\in \Nat$, and $\om\in [0,1)$,
\be \label{eq-new1}
|\Phi_a(\zeta^n(b),\om)|\le C' |\zeta^n(b)|\cdot \prod_{k=0}^{n-1} \Bigl(1 - c_1\bigl\|\om\,|\zeta^k(v)|\bigr\|^2\Bigr);
\ee

{\bf (ii)} for all $N\in \Nat,\ \om \in [0,1)$, and $a\in \A$,
\be \label{eq-matrest0}
|S^x_N(\One_{[a]},\om)| \le C'' N\prod_{k=0}^{\lfloor \log_\theta N- C_2\rfloor} (1 - c_1\bigl\|\om |\zeta^k(v)|\bigr\|^2)\ \ \ \mbox{for all}\ \ x\in X_\zeta.
\ee
%and
%\be \label{eq-matrest}
%G_N(\One_{[a]},\om)\le (C')^2 N\prod_{k=0}^{\lfloor \log N-\log c_2\log N\rfloor} \left(1 - c_1\bigl\|\om |\zeta^k(v)|\bigr\|^2\right)^2.
%\ee
\end{prop}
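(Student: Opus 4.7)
The heart of the argument is part~(i); part~(ii) then follows by combining (i) with Proposition~\ref{prop-Step1}. The core observation for (i) is that the hypothesis that $vc$ occurs as a subword in $\zeta(b)$ forces two copies of the block $\zeta^{n-1}(c)$ to appear inside $\zeta^n(b)$, at distance exactly $|\zeta^{n-1}(v)|$. The two corresponding summands in the expansion of $\Phi_a(\zeta^n(b),\omega)$ therefore combine with a common coefficient $1+e^{-2\pi i\omega|\zeta^{n-1}(v)|}$, whose modulus $2|\cos(\pi\omega|\zeta^{n-1}(v)|)|$ is strictly less than $2$ whenever $\omega|\zeta^{n-1}(v)|$ is away from $\Z$; this is what produces the per-step contraction.

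For (i) I would argue by induction on $n$ with the quantity
$$F_n(\omega):=\max_{a,b\in\A}\frac{|\Phi_a(\zeta^n(b),\omega)|}{|\zeta^n(b)|},$$
the base case $F_0\equiv 1$ being immediate from $\Phi_a(b,\omega)=\delta_{a,b}$. For the induction step, use the prefix decomposition $\zeta^n(b)=\zeta^{n-1}(u_1^{(b)})\cdots\zeta^{n-1}(u_{k_b}^{(b)})$ underlying (\ref{eq-matr1}) to expand $\Phi_a(\zeta^n(b),\omega)$ as a sum over $j$; then isolate the two indices $j_1<j_2$ for which $u_{j_1}^{(b)}=u_{j_2}^{(b)}=c$ arise from the $vc$-subword, factor out the common phase and the coefficient $1+e^{-2\pi i\omega|\zeta^{n-1}(v)|}$ on $\Phi_a(\zeta^{n-1}(c),\omega)$, and bound the remaining $j$-terms trivially by the inductive hypothesis. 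Combining the elementary inequality $|1+e^{2\pi it}|=2|\cos(\pi t)|\le 2-4\|t\|^2$ with the uniform Perron--Frobenius lower bound $|\zeta^{n-1}(c)|/|\zeta^n(b)|\ge\eta>0$ furnished by (\ref{eq-PF1}) yields the one-step contraction
$$F_n(\omega)\le F_{n-1}(\omega)\bigl(1-c_1\|\omega|\zeta^{n-1}(v)|\|^2\bigr),$$
with $c_1:=4\eta$. Iterating gives~(i), in fact with $C'=1$.

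For (ii), set $f:=\One_{[a]}$, so that $\Phi_f=\Phi_a$ and $S_N^x(f,\omega)=\Phi_a(x[0,N-1],\omega)$ by (\ref{uga1}). Define
$$F_\omega(n):=\prod_{k=0}^{n-1}\bigl(1-c_1\|\omega|\zeta^k(v)|\|^2\bigr);$$
part~(i) is then precisely the hypothesis (\ref{estim2}) of Proposition~\ref{prop-Step1}. The monotonicity $F_\omega(n+1)\le F_\omega(n)$ is obvious, while the ratio $F_\omega(n)/F_\omega(n+1)=(1-c_1\|\omega|\zeta^n(v)|\|^2)^{-1}\le (1-c_1/4)^{-1}$ using $\|\cdot\|^2\le 1/4$. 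After shrinking $c_1$ in advance so that $c_1<4(\theta-1)/\theta$ (which is harmless, since (i) still holds for any smaller $c_1$), we may take $\theta':=(1-c_1/4)^{-1}<\theta$, whereupon (\ref{estim3}) is fulfilled. Proposition~\ref{prop-Step1} then delivers (\ref{eq-matrest0}) with $C'':=C_1/(\theta-\theta')$ and $C_2$ increased by one unit to absorb the discrepancy between $\lfloor\log_\theta N-C_2\rfloor$ and the upper index of the product.

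The only step with real content is the pairing trick in the second paragraph: correctly identifying the two indices $j_1,j_2$ arising from the $vc$-subword and isolating the phase factor $1+e^{-2\pi i\omega|\zeta^{n-1}(v)|}$. Once this pairing collapses the matrix Riesz product into a genuine scalar recursion with an explicit per-step contraction, the remainder is standard Perron--Frobenius bookkeeping together with a direct application of Proposition~\ref{prop-Step1}.
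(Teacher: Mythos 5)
Your proposal is correct and follows essentially the same route as the paper: the same pairing of the two occurrences of $c$ in $\zeta(b)=p^{(b)}vc\,q^{(b)}$ produces the factor $|1+e^{-2\pi i\omega|\zeta^{n-1}(v)|}|$, which is converted into a per-step contraction via $|1+e^{2\pi i\tau}|\le 2-\mathrm{const}\cdot\|\tau\|^2$ and a Perron--Frobenius lower bound, and part (ii) is the same reduction to Proposition~\ref{prop-Step1}. The only cosmetic difference is that you run a scalar induction on $\max_{a,b}|\Phi_a(\zeta^n(b),\omega)|/|\zeta^n(b)|$, whereas the paper propagates the identical one-step loss through the entrywise-absolute-value matrix product applied to the vectors $(\Sf^t)^k\one$.
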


\begin{remark} \label{rem-Dioph}
This result should be compared with the criterion for $e^{2\pi i \om}$ to be an eigenvalue of the substitution dynamical system \cite{Host,FMN}, which, under some  technical assumptions, says that
\be \label{cond-eigen}
e^{2\pi i \om}\ \ \mbox{is an eigenvalue}\ \Longleftrightarrow\ \sum_{k=0}^\infty \big\|\om |\zeta^k(v)|\bigr\|^2<\infty
\ \ \mbox{for every return word $v$}.
\ee
Note that in the last proposition we do not exclude the case when $T_\zeta$ has nontrivial eigenvalues.
%(In fact, it is not hard to deduce that the latter is equivalent to $\lim_{k\to \infty} \bigl\|\om |\zeta^k(v)|\bigr\|=0$, in which case the convergence is exponential.)
\end{remark}

Proposition~\ref{prop-Dioph} and Lemma~\ref{lem-var} imply that on certain subsets of $\T$ defined in terms of Diophantine properties, the spectral measures satisfy a 
H\"older condition with a uniform exponent.

\begin{corollary} \label{cor-Dioph} Suppose that the assumptions of Proposition~\ref{prop-Dioph} are satisfied.
Fix small $\delta>0, \ \eps>0,$ and consider
 \be \label{def-Om}
 \Gamma_{\delta,\eps}(\zeta):= \Bigl\{\om\in \T:\ \liminf_{n\to \infty} \frac{\#\{k\le n:\ \bigl\|\om |\zeta^k(v)| \bigr\| \ge\delta\}}{n} > \eps\Bigr\}.
 \ee
Then for all $a,b\in \A$ and $\om \in \Gamma_{\delta,\eps}(\zeta)$ there exists $C>0$ such that 
\be \label{eq-Dioph2}
\sig_a(B(\om,r)) \le C r^\beta,\ \ \mbox{where}\ \beta = \beta(\zeta,\delta,\eps).
\ee
\end{corollary}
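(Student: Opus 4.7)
The plan is to apply Proposition \ref{prop-Dioph}(ii) with $f = \One_{[a]}$, use the Diophantine hypothesis defining $\Gamma_{\delta,\eps}(\zeta)$ to convert the product bound into a polynomial bound in $N$, and then feed this into Lemma \ref{lem-var} to read off the H\"older exponent.

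First, I would fix $\om \in \Gamma_{\delta,\eps}(\zeta)$. By the definition (\ref{def-Om}), there exists $n_0 = n_0(\om)$ such that for every $n \ge n_0$,
$$
\#\bigl\{k \in [0,n]:\ \bigl\|\om\,|\zeta^k(v)|\bigr\| \ge \delta\bigr\} \ge \eps n.
$$
Since each factor in (\ref{eq-matrest0}) is bounded by $1$, and each ``large'' factor is bounded by $(1-c_1\delta^2)$, I would estimate, for $n_N := \lfloor \log_\theta N - C_2\rfloor \ge n_0$,
$$
\prod_{k=0}^{n_N}\bigl(1 - c_1\bigl\|\om\,|\zeta^k(v)|\bigr\|^2\bigr) \le (1-c_1\delta^2)^{\eps n_N}.
$$

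Next I would convert this to a power of $N$. Set $\alpha := -\log_\theta(1-c_1\delta^2) > 0$. Since $n_N \ge \log_\theta N - C_2 - 1$, we get
$$
(1-c_1\delta^2)^{\eps n_N} = \theta^{-\alpha \eps n_N} \le \theta^{\alpha\eps(C_2+1)} \cdot N^{-\alpha\eps}.
$$
Combining with Proposition \ref{prop-Dioph}(ii), for every $x \in X_\zeta$ and every $N$ with $n_N \ge n_0$,
$$
|S_N^x(\One_{[a]},\om)| \le C_3\, N^{1-\alpha\eps},
$$
with $C_3$ depending on $\zeta, \delta, \eps$ and $\om$ (through $n_0$). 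Setting $\Om(r) := r^{2\alpha\eps}$, this is precisely the hypothesis (\ref{eq-SN}) of Lemma \ref{lem-var} with $C = C_3^2$ and an appropriate $N_0$.

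Finally, Lemma \ref{lem-var} yields
$$
\sig_a(B(\om,r)) \le \frac{\pi^2 C_3^2}{4}\,(3r)^{2\alpha\eps}\ \ \mbox{for all}\ r \le (2N_0)^{-1},
$$
which gives (\ref{eq-Dioph2}) with $\beta = 2\eps\alpha = -2\eps\log_\theta(1-c_1\delta^2)$; the constant $C$ can be adjusted to absorb the finitely many $r > (2N_0)^{-1}$, since $\sig_a$ is a finite measure. No step is really hard — the only delicate point is that the constants $C_3$ and $N_0$ depend on $\om$ (through $n_0(\om)$), so the H\"older constant $C$ in (\ref{eq-Dioph2}) is $\om$-dependent, but the exponent $\beta = \beta(\zeta,\delta,\eps)$ is uniform, as stated.
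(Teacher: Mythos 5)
Your proposal is correct and follows exactly the route of the paper's (much terser) proof: Proposition~\ref{prop-Dioph}(ii) plus the definition of $\Gamma_{\delta,\eps}(\zeta)$ give the bound $G_N(\One_{[a]},\om)\le CN\,\Om(1/N)$ with $\Om(r)=r^{-2\eps\log_\theta(1-c_1\delta^2)}$, and Lemma~\ref{lem-var} then yields the H\"older estimate with the same exponent $\beta=-2\eps\log_\theta(1-c_1\delta^2)$ that you obtain. Your remarks on the $\om$-dependence of the constant (but not of the exponent) and on absorbing large $r$ into $C$ are exactly the right bookkeeping.
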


\begin{proof}
It follows from Proposition~\ref{prop-Dioph} that (\ref{eq-Hof1}) holds with $\Om(r) = r^{-2\eps \log_\theta(1-c_1\delta^2)}$ for $f = \One_{[a]}$, so Lemma~\ref{lem-var} yields (\ref{eq-Dioph2}),
since $c_1$ depends only on the substitution $\zeta$. 
\end{proof}

\begin{remark} 
1. One can show that  $$\dim_H\Bigl(\T\setminus \bigcup_{\delta>0,\eps>0} \Gamma_{\delta,\eps}(\zeta)\Bigr) = 0.$$
For instance, in the special case when $\zeta$ has constant length $q\ge 2$, the set 
$\T\setminus \bigcup_{\eps>0} \Gamma_{q^{-\ell},\eps}(\zeta)$ consists of those $\om$ for which the standard base-$q^\ell$ expansion of
$\om|v|$ has all digits, except  $0$ and $q^{\ell}-1$, with zero frequency, whence a standard computation yields
$$
\dim_H\Bigl(\T\setminus \bigcup_{\eps>0} \Gamma_{q^{-\ell},\eps}(\zeta)\Bigr) = \frac{\log 2}{\log(q^\ell)} \to 0,\ \ \mbox{as}\ \ell\to \infty.
$$

2. It follows from \cite{Weyl} that for Lebesgue-a.e.\ 
$\om$ the sequence $\{\om |\zeta^k(v)|\}_{k\ge 0}$ is uniformly distributed modulo 1, which yields an explicit
H\"older exponent for spectral measures $\sig_a$ almost everywhere. This is, however, not very interesting, since any finite positive measure on $[0,1]$ has an a.e.\ differentiable cumulative distribution function, hence that measure satisfies a Lipschitz condition almost everywhere.

\end{remark}

\begin{proof}[Proof of Proposition~\ref{prop-Dioph}] 
{\bf Step 1: reduction (\ref{eq-new1}) $\Rightarrow$ (\ref{eq-matrest0}).} This is immediate from Proposition~\ref{prop-Step1}: take $f=\One_{[a]}$,
$$
F_\om(n) := C'\prod_{k=0}^{n-1} \Bigl(1 - c_1\bigl\|\om\,|\zeta^k(v)|\bigr\|^2\Bigr),
$$
and note that this sequence satisfies $(1-c_1) F_n(\om) \le F_{n+1}(\om) \le F_n(\om)$. Now,
assuming
$$
c_1 \le \frac{\theta-1}{\theta+1},
$$
the condition (\ref{estim3}) holds with $\theta' = (1+\theta)/2$, and Proposition~\ref{prop-Step1} applies.

\medskip

{\bf Step 2: proof of (\ref{eq-new1}).} We fix $\om$ and omit it from  notation, so that $\M_n=\M_n(\om)$. We are going to use (\ref{def-Psi}) and (\ref{eq-matr2}). By assumption,  for any $b\in \A$, we can write 
\be \label{eq-ret}
\zeta(b) =  p^{(b)} vc q^{(b)},
\ee
where $p^{(b)}$ and $q^{(b)}$ are words, possibly empty, and $v$ starts with $c$. We are going to estimate the absolute value of the trigonometric polynomial $\M_n(b,c)$ given
by (\ref{matr}). Note that $\M_n(b,c)$ is  a trigonometric polynomial with $\Sf^t(b,c)$ exponential terms and all coefficients equal to one. 
By (\ref{matr}) and (\ref{eq-ret}),  the expression for $\M_n(b,c)$ includes the terms $e^{-2\pi i \om |\zeta^n(p^{(b)})|}$ and $e^{-2\pi i \om |\zeta^n(p^{(b)}v)|}$, hence 
$$
|\M_n(b,c)| \le \Sf^t(b,c) - 2 + |1 + e^{-2\pi i \om |\zeta^n(v)|} |.
$$
From the inequality
\be \label{lem-elem1}
|1 + e^{2\pi i \tau} | \le 2 - \half \|\tau\|^2,\ \ \tau\in \R,
\ee
we have
\be \label{eq-new11}
|\M_n(b,c)|\le \Sf^t(b,c) - \half \bigl\|\om |\zeta^n(v)| \bigr\|^2.
\ee
We will use the following notation: 
\begin{itemize} \item for  vectors $\vec{x},\vec{y}\in \R^m$, the inequality $\vec{x}\le \vec{y}$ means componentwise inequality;
\item
the operation of taking absolute values of all entries for a vector $\vec{x}$  and a matrix $A$ will be denoted $\vec{x}^{\mathbf |\cdot|}$ and $A^{|\cdot|}$.
\end{itemize}
%
%The following lemma is trivial but useful.
%\begin{lemma}\label{lem-useful}
%(i) For any matrix $A$ and $\vec{x}\ge \vec{0}$ we have $(A\vec{x})^{|\cdot|} \le A^{|\cdot|}\vec{x}$;
%
%(ii) For any vectors $ \vec{0} \le \vec{x} \le \vec{y}$ and matrices $A,B$ with $0\le A_{i,j} \le B_{i,j}$ for all $i,j$, we have $A\vec{x}\le B \vec{y}$. 
%\end{lemma}
It is clear that for any, generally speaking, rectangular matrices $A,B$ such that the product $AB$ is well-defined, we have
\be \label{eq-useful}
(AB)^{|\cdot|}\le A^{|\cdot|} B^{|\cdot|}.
\ee 
For an arbitrary $\vec{x} = [x_1,\ldots,x_{m}]^t >\vec{0}$ and $k\ge 0$, using (\ref{eq-new11}) we can estimate
\begin{eqnarray}
(\M_k^{|\cdot|}\vec{x})_b & = & \sum_{j=1}^{m}|\M_k(b,j)| x_j \nonumber \\
                                & \le & \sum_{j=1}^{m}
                             \Sf^t(b,j) x_j - \half \bigl\|\om |\zeta^k(v)| \bigr\|^2x_c\nonumber \\
                                 & \le & (1-c_3(\vec{x}) \bigl\|\om |\zeta^k(v)| \bigr\|^2) \cdot \sum_{j=1}^{m} \Sf^t(b,j) x_j \nonumber \\
                                  & =   & (1-c_3(\vec{x}) \bigl\|\om |\zeta^k(v)| \bigr\|^2) \cdot (\Sf^t \vec{x})_b, \label{eq-new12}
\end{eqnarray}
where
$$
c_3(\vec{x}) = \frac{x_c}{2m\max_j \Sf^t(b,j)\cdot \max_j x_j}\,.
$$
Observe that (\ref{eq-useful}) implies
$$
(\vec{\Psi}_n^{(a)}(\om))^{|\cdot|} = (\M_{n-1}\cdots \M_0 \vec{e}_a)^{|\cdot|}  \le  \M_{n-1}^{|\cdot|}\cdots \M_0^{|\cdot|} \vec{e}_a \le \M_{n-1}^{|\cdot|}\cdots \M_0^{|\cdot|} \one.
$$
Thus, using (\ref{eq-new12}) inductively we obtain
$$
(\vec{\Psi}_n^{(a)}(\om))^{|\cdot|} \le \prod_{k=0}^{n-1} \left(1- c_3((\Sf^t)^k \one)\bigl\|\om |\zeta^k(v)| \bigr\|^2\right) \cdot(\Sf^t)^n \one.
$$
Note that
$$
c_1 := \inf_{k\ge 0} c_3((\Sf^t)^k \one)>0
$$
by the Perron-Frobenius Theorem. Further, $\langle \vec{e}_j,(\Sf^t)^n \one\rangle = |\zeta^n(j)|$ for all $j$, hence
$(\Sf^t)^n \one\le C' |\zeta^n(b)|\one$, with $C'=c'/c$, by (\ref{eq-PF1}), and the desired inequality (\ref{eq-new1}) follows,  in view of (\ref{def-Psi}).
Proposition~\ref{prop-Dioph} is proved completely.
\end{proof}

\subsection{Spectral measure at zero}
As we saw in Lemmas~\ref{lem-easy1} and \ref{lem-var}, the decay of the spectral measure $\sig_f$ at zero is controlled, in some sense, by the  growth of Birkhoff sums
$
S_N^x(f,0) = \sum_{n=0}^{N-1} f(T^n x).
$
Of course, we should take $f$ orthogonal to constants, in order to avoid the trivial point mass at zero. In the case of a substitution dynamical
system $(X_\zeta,T_\zeta,\mu)$, it is natural to consider functions $f: X_\zeta\to \C$ depending only on the first coordinate $x_0$, that is,
$f = \sum_{a\in \Ak} d_a \One_{[a]}$. So we assume $\int f\,d\mu=\sum_{a\in \Ak} d_a\mu([a])=0$. (It is well-known that $(\mu([a]))_{a\in \Ak}$ is the normalized
Perron-Frobenius eigenvector of the substitution matrix $\Sf_\zeta$, see \cite{Queff}.) We have
$$
S_N^x(f,0) = \sum_{n=0}^{N-1} d_{x_n} .
$$
The asymptotics and sharp estimates of $S_N^x(f,0)$ were investigated by J.-M. Dumont and A. Thomas \cite{DT}, B. Adamczewski \cite{Adam},
and others. 
We do not give a complete overview here, but state two results that are relevant for us.

\begin{theorem} \label{th-DT} {\em (Dumont and Thomas \cite[Th.\,2.6]{DT})} Let $\zeta$ be a primitive substitution, and suppose that it has a fixed point $\wtil{x}\in \Ak^\N$, that is, $\zeta(\wtil{x})=\wtil{x}$.
Assume that the   second eigenvalue of the substitution matrix satisfies $\theta_2>1$, with multiplicity $\alpha+1$, and all other eigenvalues, except the largest $\theta$,
are strictly less that $\theta_2$ in absolute value. Suppose that $f = \sum_{a\in \Ak} d_a \One_{[a]}$, with $\int f\,d\mu=0$. Then there exists a continuous function $G$ on $(0,\infty)$, such that $G(\theta t) = G(t)$ for
all $t>0$ and 
$$
S_N^{\wtil{x}}(f,0) = (\log_\theta N)^\alpha N^\beta G(N) +o((\log N)^\alpha N^\beta),\ \ \ N\to \infty,\ \ \ \mbox{where}\ \ \beta = \log_\theta \theta_2.
$$
\end{theorem}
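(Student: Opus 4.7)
The strategy is to reduce the Birkhoff sum along the fixed point to a finite sum of images of ``letter-count'' vectors under powers of the substitution matrix $\Sf$, and then to extract the $\theta_2$-contribution via the Jordan decomposition of $\Sf$.

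\textbf{Step 1 (Dumont--Thomas decomposition along a fixed point).} Apply Lemma~\ref{lem-accord} with $x=\wtil{x}$; since $\zeta(\wtil{x})=\wtil{x}$, the left half of the decomposition (\ref{eq-accord}) is automatic and we get the simpler one-sided form
\[
\wtil{x}[0,N-1] = p_0\,\zeta(p_1)\,\zeta^2(p_2)\cdots \zeta^n(p_n),
\]
where $p_j$ is a proper prefix of $\zeta(b_j)$ for some $b_j\in\Ak$ (the $b_j$'s and $p_j$'s are uniquely determined by the greedy substitutive expansion of $N$), and $n=\lfloor\log_\theta N\rfloor + O(1)$.

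\textbf{Step 2 (Reduction to matrix products).} Let $\vec{L}(w)=(|w|_a)_{a\in\Ak}^t\in\R^m$ be the abelianization, so that $\vec{L}(\zeta(w))=\Sf\vec{L}(w)$. Since $f=\sum_a d_a \One_{[a]}$,
\[
S_N^{\wtil{x}}(f,0) \;=\; \sum_{j=0}^n \langle \vec{d},\vec{L}(\zeta^j(p_j))\rangle
\;=\; \sum_{j=0}^n \langle \vec{d},\Sf^j \vec{L}(p_j)\rangle,
\]
with $\|\vec{L}(p_j)\|\le \max_b|\zeta(b)|$, i.e., uniformly bounded in $j,N$.

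\textbf{Step 3 (Killing the Perron--Frobenius direction).} Let $\vec{r},\vec{\ell}$ be right/left Perron--Frobenius eigenvectors normalized by $\langle\vec{\ell},\vec{r}\rangle=1$, and let $E_1=\vec{r}\vec{\ell}^{\,t}$ be the spectral projector for the eigenvalue $\theta$. Since $\mu([a])=r_a/\langle\one,\vec{r}\rangle$, the assumption $\int f\,d\mu=0$ gives $\langle\vec{d},\vec{r}\rangle=0$, hence $\vec{d}^{\,t}E_1=0$. Writing $\Sf^j = \theta^j E_1 + \Sf_2^j + R^j$, where $\Sf_2$ is the restriction of $\Sf$ to the generalized $\theta_2$-eigenspace (a single Jordan block of size $\alpha+1$) and $R$ is the restriction to the complementary spectral part with $\rho(R)=\theta_3<\theta_2$, the PF-direction drops out:
\[
S_N^{\wtil{x}}(f,0) \;=\; \sum_{j=0}^n \langle \vec{d}, \Sf_2^j \vec{L}(p_j)\rangle \;+\; \sum_{j=0}^n \langle \vec{d}, R^j \vec{L}(p_j)\rangle.
\]
The second sum is $O(\sum_j \theta_3^j)=O(\theta_3^n)=O(N^{\log_\theta\theta_3})=o(N^\beta)$.

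\textbf{Step 4 (Extracting $j^\alpha \theta_2^j$).} Because $\Sf_2$ has a Jordan block of size $\alpha+1$ for $\theta_2$, one has $\Sf_2^j = \sum_{k=0}^\alpha \binom{j}{k}\theta_2^{j-k}N_2^k$ with $N_2$ nilpotent of index $\alpha+1$. The dominant contribution comes from $k=\alpha$, giving terms of order $j^\alpha \theta_2^j/\alpha!$; all smaller $k$ contribute $o(n^\alpha \theta_2^n)$ after summation. Thus
\[
S_N^{\wtil{x}}(f,0) \;=\; \frac{1}{\alpha!}\sum_{j=0}^n j^\alpha \theta_2^j\,\langle\vec{d},N_2^\alpha \pi_2\vec{L}(p_j)\rangle \;+\; o(n^\alpha \theta_2^n),
\]
where $\pi_2$ is the spectral projector onto the generalized $\theta_2$-eigenspace.

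\textbf{Step 5 (Self-similar structure and the oscillating factor $G$).} Write $n = \log_\theta N - \tau$ with $\tau=\tau(N)\in[0,1)$ (so $\theta^n \asymp N$), and pull out $n^\alpha \theta_2^n = (\log_\theta N)^\alpha N^\beta\,\theta_2^{-\tau}\,(1+o(1))$. The sum $\sum_{j=0}^n (j/n)^\alpha \theta_2^{-(n-j)} c_j(N)$, where $c_j(N)=\langle \vec d, N_2^\alpha\pi_2\vec L(p_j)\rangle$, is uniformly bounded and, because the top digits of the substitutive expansion of $N$ vary continuously as $\log_\theta N$ varies in a fundamental interval $[\log_\theta t_0,\log_\theta t_0+1)$ and are simply shifted when $N$ is replaced by a value whose expansion adds one digit on top, the sum converges as $n\to\infty$ to a function $G(N)$ depending only on $\{\log_\theta N\}$ up to a correction vanishing with $n$. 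This yields continuity of $G$ on $(0,\infty)$ and the scaling identity $G(\theta t)=G(t)$.

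\textbf{Main obstacle.} Steps 1--4 are routine once Lemma~\ref{lem-accord} is in hand. The genuine work is in Step~5: showing that the coefficient in front of $(\log_\theta N)^\alpha N^\beta$ converges uniformly as $N\to\infty$ along each class $\{\log_\theta N\}=\mathrm{const}$, and that the resulting function is continuous with the precise multiplicative period $\theta$. This requires a careful analysis of how the top portion of the Dumont--Thomas expansion of $N$ depends on $N$ (an Ostrowski-like numeration controlled by $\zeta$), and is the content of the argument in \cite[Th.\,2.6]{DT}.
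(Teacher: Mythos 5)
The paper does not prove this statement: Theorem~\ref{th-DT} is quoted verbatim from Dumont and Thomas \cite[Th.\,2.6]{DT} and used as a black box, so there is no in-paper proof to compare against. Judged on its own, your Steps 1--4 are the correct and standard reduction: the one-sided prefix decomposition of $\wtil{x}[0,N-1]$ (the fixed-point case of Lemma~\ref{lem-accord}), abelianization to $\sum_j \langle \vec d,\Sf^j\vec L(p_j)\rangle$, annihilation of the Perron--Frobenius component via $\int f\,d\mu=\langle\vec d,\vec r\rangle=0$, and the Jordan-block expansion isolating the $j^\alpha\theta_2^j$ terms. Two caveats. First, ``multiplicity $\alpha+1$'' in the hypothesis must be read as the size of the (largest) Jordan block for $\theta_2$, as you implicitly do; if $\theta_2$ were semisimple of multiplicity $\alpha+1$ the growth would only be $\theta_2^j$ and the stated asymptotics would be wrong, so your reading is the intended one but worth flagging.

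Second, Step 5 --- which you correctly identify as the main obstacle --- is not actually carried out, and the heuristic you give for it is flawed as stated: the Dumont--Thomas digits $(p_j)$ of $N$ are discrete data and do \emph{not} ``vary continuously'' with $\log_\theta N$; the numeration map $N\mapsto(p_n,p_{n-1},\dots)$ has jump discontinuities at every $N$ whose expansion changes a high-order digit. The actual content of \cite[Th.\,2.6]{DT} is to define $G$ on all of $(0,\infty)$ via a limit of the renormalized partial sums $\theta_2^{-n}\sum_{j\le n}\binom{j}{\alpha}\theta_2^{j}\langle\vec d,N_2^\alpha\pi_2\vec L(p_j)\rangle$ extended to real arguments, prove uniform convergence of that limit (which uses the spectral gap $|\theta_3|<\theta_2$ and the geometric decay of the influence of low-order digits), and then deduce continuity and the multiplicative periodicity $G(\theta t)=G(t)$. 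None of that is established by your sketch, so the proposal is an accurate road map for Steps 1--4 but has a genuine gap exactly where the theorem's substance lies. Since the paper only cites this result, the honest conclusion is that your plan reconstructs the easy half of the Dumont--Thomas argument and defers the hard half back to the original reference.
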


From this theorem, combined with Proposition~\ref{prop-Step1}, it  follows that 
$$
S_N^x(f,0) = O((\log_\theta N)^\alpha N^\beta),\ \ \ N\to\infty,
$$
uniformly in $x\in X_\zeta$. 
In a much more general case, Adamczewski \cite{Adam} obtained sharp estimates for $S_N^x(f,0)$. The following is a Corollary of \cite[Th.\,1]{Adam}
(we do not give a complete statement here).

\begin{theorem} \label{th-Adam} {\em (Adamczewski \cite{Adam})} Let $\zeta$ be a primitive substitution, with the second eigenvalue of the substitution matrix equal to $\theta_2$,
and the total number of eigenvalues equal to $\theta_2$ in absolute value is $\alpha+1$. 
Suppose that $f = \sum_{a\in \Ak} d_a \One_{[a]}$, with $\int f\,d\mu=0$. Then the following holds for a fixed point of the substitution 
$\wtil{x}$:

{\bf (i)} if $|\theta_2|>1$, then $S_N^{\wtil{x}}(f,0) = O((\log_\theta N)^\alpha N^\beta)$, with $\beta = \log_\theta \theta_2$;

{\bf (ii)} if $|\theta_2|=1$, then $S_N^{\wtil{x}}(f,0) = O((\log_\theta N)^{\alpha+1})$.
\end{theorem}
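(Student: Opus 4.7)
The plan is to combine the Dumont--Thomas prefix decomposition (Lemma~\ref{lem-accord}) with an analysis of the Jordan structure of the substitution matrix $\Sf$. Since $\wtil{x}$ is a fixed point of $\zeta$, no suffix terms appear in the decomposition, so one obtains, for every $N$,
\[
\wtil{x}[0,N-1] = u_0\,\zeta(u_1)\,\zeta^2(u_2)\,\cdots\,\zeta^n(u_n),
\]
with $n = \lfloor\log_\theta N\rfloor + O(1)$ and each $u_j$ a proper prefix of $\zeta(b_j)$ for some $b_j\in\Ak$. Setting $\Psi_f(w) := \sum_{i=0}^{|w|-1} d_{w_i}$, additivity of $\Psi_f$ over concatenation gives
\[
S_N^{\wtil{x}}(f,0) \;=\; \sum_{j=0}^n \Psi_f(\zeta^j(u_j)).
\]
Since each $u_j$ has length at most $L := \max_b|\zeta(b)|$, it suffices to bound $|\Psi_f(\zeta^j(b))|$ for individual letters $b\in\Ak$.

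The next step is a linear-algebraic estimate on $\Psi_f(\zeta^j(b))$. The number of $a$'s appearing in $\zeta^j(b)$ equals $(\Sf^j)_{a,b}$, so
\[
\Psi_f(\zeta^j(b)) = \sum_a d_a (\Sf^j)_{a,b} = \langle \vec d, \Sf^j \bbe_b\rangle, \qquad \vec d := (d_a)_{a\in\Ak}.
\]
Decomposing $\Sf^j = \theta^j\vec r\vec\ell^t + R_j$ via the Jordan form, one has $\|R_j\| = O(j^\alpha |\theta_2|^j)$, where $\alpha$ is the parameter in the theorem (the $j^\alpha$ factor accommodates Jordan blocks or several eigenvalues of the same modulus $|\theta_2|$). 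The key observation is that the invariant measure is $\mu([a]) = r_a/\langle\vec r,\one\rangle$, so the hypothesis $\int f\,d\mu=0$ is equivalent to $\langle\vec d,\vec r\rangle = 0$. This annihilates the leading term $\theta^j\ell_b\langle\vec d,\vec r\rangle$ in $\langle\vec d,\Sf^j\bbe_b\rangle$, leaving
\[
|\Psi_f(\zeta^j(b))| \;=\; |\langle \vec d, R_j\bbe_b\rangle| \;=\; O(j^\alpha |\theta_2|^j),
\]
uniformly in $b\in\Ak$.

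Finally I would assemble the two estimates to obtain
\[
|S_N^{\wtil{x}}(f,0)| \;\le\; L\cdot C\sum_{j=0}^n j^\alpha |\theta_2|^j.
\]
In case~(i), $|\theta_2|>1$, the geometric series is dominated by its last term, giving $O(n^\alpha|\theta_2|^n)$; using $n\sim\log_\theta N$ and $|\theta_2|^n\sim N^{\log_\theta|\theta_2|}$, this is precisely $O((\log_\theta N)^\alpha N^\beta)$ with $\beta = \log_\theta\theta_2$. In case~(ii), $|\theta_2|=1$, the sum is $O(n^{\alpha+1}) = O((\log_\theta N)^{\alpha+1})$.

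The only delicate point will be pinning down the Jordan-form estimate $\|R_j\| = O(j^\alpha |\theta_2|^j)$ with the value of $\alpha$ stated in the theorem: when several eigenvalues of modulus $|\theta_2|$ coexist, one must verify that their combined contribution is polynomially bounded in $j$ with the right exponent. Only an upper bound is required, however, and this follows from the algebraic multiplicity interpretation of $\alpha+1$ and the standard block-diagonalization of $\Sf$. I note in passing that the argument actually goes through uniformly in $x\in X_\zeta$ once one uses the full prefix--suffix version of Lemma~\ref{lem-accord}, since the suffix terms $v_j$ are estimated identically.
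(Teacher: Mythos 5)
Your proposal is correct. Note first that the paper does not actually prove Theorem~\ref{th-Adam}: it is quoted from Adamczewski \cite{Adam} as a corollary of his Theorem 1, so there is no in-paper argument to compare against. What you have written is the standard proof behind such statements, and it is essentially self-contained given Lemma~\ref{lem-accord}: the prefix decomposition reduces $S_N^{\wtil{x}}(f,0)=\Psi_f(\wtil{x}[0,N-1])$ to $O(\log_\theta N)$ terms $\Psi_f(\zeta^j(w))$ with $|w|<L$; the identity $\Psi_f(\zeta^j(b))=\langle\vec{d},\Sf^j\bbe_b\rangle$ together with $\mu([a])=r_a/\langle\vec{r},\one\rangle$ shows that the hypothesis $\int f\,d\mu=0$ annihilates the Perron term; and the Jordan form bounds the remainder by $O(j^\alpha|\theta_2|^j)$. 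Two small points should be tidied up. First, for a one-sided fixed point the Dumont--Thomas decomposition reads $\zeta^n(p_n)\zeta^{n-1}(p_{n-1})\cdots p_0$ with the highest power first, not last as you wrote it; since $\Psi_f$ is additive over concatenation in any order this does not affect the estimate, and, as you note, the same bound holds uniformly in $x\in X_\zeta$ via the full prefix--suffix form of Lemma~\ref{lem-accord} --- which is precisely what the paper extracts from these results through Proposition~\ref{prop-Step1}. Second, the bound $\|R_j\|=O(j^\alpha|\theta_2|^j)$ requires reading ``$\alpha+1$ eigenvalues of modulus $|\theta_2|$'' as a count with algebraic multiplicity, so that every Jordan block attached to such an eigenvalue has size at most $\alpha+1$; eigenvalues of strictly smaller modulus contribute $j^{m}|\lambda|^j=o(|\theta_2|^j)$ and are harmless in both cases. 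With that reading, $\sum_{j\le n}j^\alpha|\theta_2|^j=O(n^\alpha|\theta_2|^n)$ for $|\theta_2|>1$ and $O(n^{\alpha+1})$ for $|\theta_2|=1$, and $n=\log_\theta N+O(1)$ yields exactly (i) and (ii).
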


Now the following is immediate, in view of Lemma~\ref{lem-var} and Proposition~\ref{prop-Step1}.

\begin{corollary}
Under the conditions of Theorem~\ref{th-Adam} we have

{\bf (i)} if $|\theta_2|>1$, then $\sig_f(B(0,r)) = O((\log(1/r)^{2\alpha} r^{2-2\beta}),\ r\to 0$;

{\bf (ii)} if $|\theta_2|=1$, then $\sig_f(B(0,r)) = O((\log(1/r)^{2(\alpha+1)} r^{2}),\ r\to 0$.
\end{corollary}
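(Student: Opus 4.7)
\medskip

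\noindent\textbf{Proof plan.} My plan is to chain together Theorem~\ref{th-Adam}, Proposition~\ref{prop-Step1}, and Lemma~\ref{lem-var}, evaluating everything at the base frequency $\om=0$. First I would convert the Birkhoff sum bound of Theorem~\ref{th-Adam} into an estimate on $|\Phi_f(\zeta^n(b),0)|$. The key observation is that if $\wtil{x}$ is a fixed point of $\zeta$ with $\wtil{x}_0=b$, then $\wtil{x}[0,|\zeta^n(b)|-1]=\zeta^n(b)$, so by (\ref{uga1}),
\[
\Phi_f(\zeta^n(b),0)=S^{\wtil{x}}_{|\zeta^n(b)|}(f,0).
\]
Since $\zeta$ is primitive, by passing to a suitable power $\zeta^\ell$ (which does not change $X_\zeta$ nor the eigenvalue data beyond rescaling by $\ell$) we may assume a fixed point exists starting with any given letter $b\in\Ak$, and Theorem~\ref{th-Adam} then yields uniformly in $b$:
\[
|\Phi_f(\zeta^n(b),0)|=\begin{cases} O(n^\alpha\theta^{n\beta}) &\text{in case (i)},\\ O(n^{\alpha+1}) &\text{in case (ii)}.\end{cases}
\]
Combined with $|\zeta^n(b)|\asymp\theta^n$ from (\ref{eq-PF1}), this gives the hypothesis (\ref{estim2}) of Proposition~\ref{prop-Step1} with $F_0(n)=C(n+1)^\alpha\theta^{-n(1-\beta)}$ in case (i) and $F_0(n)=C(n+1)^{\alpha+1}\theta^{-n}$ in case (ii).

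Next I need to verify the two-sided ratio condition (\ref{estim3}). For case (i), $F_0(n+1)/F_0(n)=(1+1/(n+1))^\alpha\theta^{-(1-\beta)}$, which converges to $\theta^{-(1-\beta)}\in(\theta^{-1},1)$; analogously in case (ii). Hence for sufficiently large $n$, (\ref{estim3}) holds with $\theta'=\theta^{1-\beta+\eps}<\theta$ (resp.\ $\theta'=\theta^{1+\eps}\cdot\theta^{-1}$-style for case (ii)). For the finitely many small $n$ where monotonicity may fail, I replace $F_0$ by its non-increasing majorant $\wtil{F}_0(n):=\sup_{k\ge n}F_0(k)$, which differs from $F_0$ only by an absolute constant and still satisfies (\ref{estim3}). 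Proposition~\ref{prop-Step1} then yields, uniformly in $x\in X_\zeta$,
\[
|S^x_N(f,0)|=|\Phi_f(x[0,N-1],0)|\le C\cdot N\cdot F_0(\lfloor\log_\theta N-C_2\rfloor),
\]
which evaluates to $O((\log_\theta N)^\alpha N^\beta)$ in case (i) and $O((\log_\theta N)^{\alpha+1})$ in case (ii).

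Finally I apply Lemma~\ref{lem-var} at $\om=0$ with hypothesis (\ref{eq-SN}). In case (i) we take
\[
\Om(t)=C'(\log(1/t))^{2\alpha}\,t^{2-2\beta},
\]
and check that $\sup_x|S^x_N(f,0)|\le N\sqrt{C'\Om(1/N)}$ is exactly the uniform bound just obtained; similarly $\Om(t)=C'(\log(1/t))^{2(\alpha+1)}t^2$ in case (ii). Then (\ref{eq-Hof2}) gives $\sig_f(B(0,r))\le\tfrac{\pi^2 C}{4}\Om(3r)$, which is the claimed asymptotic in each case.

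The only genuinely non-routine point is the transfer of Adamczewski's estimate from a single fixed point to a uniform bound over $b\in\Ak$; I anticipate this as the main obstacle, but it is handled cleanly by replacing $\zeta$ with a primitive power admitting fixed points at every letter, an operation that preserves all spectral and combinatorial hypotheses of the theorem.
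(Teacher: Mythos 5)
Your overall route---level-wise bounds on $\Phi_f(\zeta^n(b),0)$, then Proposition~\ref{prop-Step1}, then Lemma~\ref{lem-var} at $\om=0$---is exactly the one the paper intends (it declares the corollary immediate from those two statements). However, the step you single out as the crux is false as stated: there need not exist a power $\zeta^\ell$ admitting a fixed point beginning with \emph{every} letter. The first-letter map $b\mapsto (\zeta(b))_0$ need not be surjective (take $\zeta(1)=12$, $\zeta(2)=11$: every $\zeta^n(2)$ begins with $1$), and $\zeta^\ell(b)$ begins with $b$ only for $b$ lying on a cycle of that map. Fortunately the transfer is easy, and Adamczewski's theorem is not even needed for the words $\zeta^n(b)$ themselves: at $\om=0$ one has $\Phi_f(\zeta^n(b),0)=\sum_{a}d_a(\Sf^n)(a,b)=\langle \Sf^n\vec{e}_b,\vec{d}\,\rangle$ with $\vec{d}=(d_a)_{a\in\Ak}$, and $\int f\,d\mu=0$ says $\vec{d}$ is orthogonal to the right Perron--Frobenius eigenvector of $\Sf$; decomposing $\vec{e}_b$ along the generalized eigenspaces of $\Sf$ gives $|\Phi_f(\zeta^n(b),0)|=O\bigl(n^{\alpha}\max(1,|\theta_2|)^n\bigr)$ for \emph{all} $b\in\Ak$. (Alternatively, each $\zeta^n(b)$ occurs inside $\zeta^{n+n_0}(c)$, a prefix of the fixed point $\wtil{x}$, so $\Phi_f(\zeta^n(b),0)$ is a difference of two Birkhoff sums along $\wtil{x}$ of length $O(\theta^{n})$.)

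The second, more serious, problem is in case (ii). With $F_0(n)=C(n+1)^{\alpha+1}\theta^{-n}$ the ratio $F_0(n+1)/F_0(n)=(1+\tfrac{1}{n+1})^{\alpha+1}\theta^{-1}$ tends to $\theta^{-1}$ from above, so for \emph{no} fixed $\theta'<\theta$ does $F_0(n+1)\ge F_0(n)/\theta'$ hold for all large $n$: hypothesis (\ref{estim3}) fails for all large $n$, not finitely many, and your non-increasing majorant only repairs the upper inequality. So Proposition~\ref{prop-Step1} cannot be invoked with this $F_0$; and if you instead run the prefix--suffix summation from its proof directly with the level-wise input $O(j^{\alpha+1})$, you get $\sum_{j\le n}O(j^{\alpha+1})=O(n^{\alpha+2})$, i.e.\ $\sup_x|S^x_N(f,0)|=O((\log N)^{\alpha+2})$, which loses a logarithm against the stated bound. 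The repair is the sharper level-wise estimate $|\Phi_f(\zeta^j(b),0)|=O(j^{\alpha})$ coming from the linear algebra above (the exponent $\alpha+1$ in Adamczewski's case (ii) already incorporates the summation over levels of the Dumont--Thomas decomposition); then $\sum_{j\le n}O(j^{\alpha})=O(n^{\alpha+1})$ uniformly in $x$, and Lemma~\ref{lem-var} with $\Om(t)=C(\log(1/t))^{2(\alpha+1)}t^{2}$ finishes. Case (i) is fine as you wrote it, \emph{modulo} the first issue: there $F_0(n+1)/F_0(n)\to |\theta_2|/\theta\in(\theta^{-1},1)$, so (\ref{estim3}) does hold and the rest of your chain goes through.
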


%%%%%%%%%%%%%%%%%%%%%%%%%%%%%%%%%%%%%%%%%%%%%

\section{H\"older continuity of  spectral measures for  almost every substitution suspension flow}

Let $\zeta$ be a primitive substitution on $\Ak=\{1,\ldots,m\}$, and let $(X_\zeta,T_\zeta)$ be the corresponding uniquely ergodic $\Z$-action. For a strictly positive vector $\vec{s} = (s_1,\ldots,s_m)$ we consider the suspension flow over $T_\zeta$, with the piecewise-constant roof function, equal to $s_j$ on the cylinder set $[j]$. The resulting space will be denoted by $\Xxi^{\vec{s}}$ and the flow
by $(\Xxi^{\vec{s}},h_t)$. This flow can also be viewed as the translation action on a tiling space, with interval prototiles of length $s_j$. Such dynamical systems have been studied e.g.\ in \cite{BeRa,CSa}.
Denote by $\wtil{\mu}$ the unique invariant probability measure for the suspension flow $(\Xxi^{\vec{s}},h_t)$. 
We have, by definition,
$$
\Xxi^{\vec{s}} = \bigcup_{a\in \A}  \Xx_a,\ \ \ \mbox{where}\ \ \Xx_a= \{(x,t):\ x\in X_\zeta,\ x_0=a,\ 0 \le t \le s_a\}. 
$$
and this union is disjoint in measure. 
%and let
%$$
%\Xx_v =  \{ h_t(x,0):\ x\in [v],\ 0 \le t \le |v|_{\vec{s}}\}.
%$$

Suppose that the characteristic polynomial $P_{\Sf}(t)$ of the substitution matrix $\Sf$ is irreducible.
The goal of this section is to obtain H\"older estimates for spectral measures of substitution suspension flows under the assumption that the
second eigenvalue of the substitution matrix $\Sf$ satisfies $|\theta_2|>1$.
This is a natural assumption: if $|\theta_2|<1$ (the ``PV case''), then all suspension flows have dense point spectrum (see \cite{CSa}) and
hence H\"older estimates cannot hold. On the other hand, if $|\theta_2|\ge 1$, then (assuming $P_{\Sf}(t)$ is irreducible), almost every suspension flow
is weakly mixing by \cite{CSa} (see, in particular, \cite[Theorem 2.7]{CSa}). ``Almost every'' refers to the choice of the vector $\vec{s}$, with respect
to the Lebesgue measure. We are able  to show that if $|\theta_2|>1$, then 
H\"older estimates hold for a.e.\ $\vec{s}$ (actually, a little bit stronger, in terms of the dimension of the exceptional set, see Theorem~\ref{th-holder2} below). Thus the spectrum is quantitatively ``separated'' from being discrete. In Section 7 we express this more precisely, in 
terms of dimensions of spectral measures. We do not know what happens in the borderline ``Salem case'' (see Definition~\ref{def-Pisot}), but we
expect that H\"older estimates no longer hold.

Let $f\in L^2(\Xxi^{\vec{s}},\wtil{\mu})$. By the Spectral Theorem for measure-preserving flows, there is a finite positive Borel measure $\sig_f$ on $\R$ such that
$$
\int_{-\infty}^\infty e^{2 \pi i\om t}\,d\sig_f(\om) = \langle f\circ h_t, f\rangle\ \ \ \mbox{for}\ t\in \R.
$$
We will focus on the spectral measures of  characteristic functions of $\Xx_a$:
$$
\sig_a:=\sig_f\ \ \mbox{for}\ f = \One_{\Xx_a},\ a\in \Ak.
$$

For a word $v$ in the alphabet $\Ak$ denote by $\vec{\ell}(v)\in \Z^m$ its ``population vector'' whose $j$-th entry is the number of $j$'s in $v$, for $j\le m$. We will  need the
``tiling length'' of $v$ defined by 
\be \label{tilength}
|v|_{\vec{s}}:= \langle\vec{\ell}(v), \vec{s}\rangle.
\ee
It is not hard to show, similarly to the case of substitution $\Z$-actions, as discussed in
Section 2.4, that the maximal spectral type of the flow $(\Xxi^{\vec{s}},h_t)$ is equivalent to 
\be \label{max_type}
\sum_{k\ge 0,\ a\in \Ak} 2^{-k} \sig_{\zeta^k[a]},
\ee
where $\sig_{\zeta^k[a]}$ is the spectral measure of the characteristic function of the set
$$
\Xx_{\zeta^k[a]}:= \{h_t(x,0):\ x\in \zeta^k[a],\ 0 \le t \le |\zeta^k(a)|_{\vec{s}}\},
$$
and these spectral measures can be analyzed similarly to $\sig_a$.

Suspension flows for $\vec{s}$, which differ by a constant multiple, are related by a ``time-scale change''.
It follows that if $\vec{s'}= c\vec{s}$ for $c>0$, then
\be \label{norma}
\sig_a^{\vec{s'}}(E) = \sig_a^{\vec{s}}(c^{-1}E)\ \ \ \mbox{for Borel}\ E,
\ee
where the superscript indicates which suspension flow is considered. Thus
it makes sense
to normalize suspension flows, for instance, assuming that $\vec{s} \in \Delta^{m-1}:= \{\vec{y}\in \R^m_+:\ \sum_{j=1}^m y_j =1\}$. 

\begin{theorem} \label{th-holder1} Let $\zeta$ be a primitive aperiodic substitution on $\Ak=\{1,\ldots,m\}$, with substitution matrix $\Sf$.  Suppose that the characteristic polynomial $P_{\Sf}(t)$ is irreducible and the second eigenvalue  satisfies $|\theta_2|>1$. Then for Lebesgue-almost every suspension flow the spectral measures of the dynamical system $(\Xxi^{\vec{s}},h_t)$ are H\"older continuous at points away from zero. 

More precisely, there exists a constant $\gam>0$, depending only on the substitution $\zeta$, such that for Lebesgue-almost every $\vec{s}\in  \Delta^{m-1}$ and $B>1$, there exist $r_0=r_0(\vec{s},B)>0$ and 
$C=C(\vec{s},B)>0$ such that  
\be \label{holder1}
\sig_a ([\om-r,\om+r]) \le C r^\gam,\ \ \mbox{for all}\  |\om| \in [\Bu^{-1},\Bu],\  0 < r \le r_0,\ \mbox{and}\ a\in \Ak.
\ee
\end{theorem}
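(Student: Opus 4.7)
My approach has three steps: adapt the matrix Riesz product framework of Sections~2--3 to the suspension flow; reduce \eqref{holder1} to a density/Diophantine condition on $\{\om|\zeta^k(v)|_{\vec{s}}\}_{k\ge 0}$ modulo $1$; and prove that condition for Lebesgue-a.e.\ $\vec{s}$ uniformly in $\om\in[B^{-1},B]$ by an Erd\H{o}s--Kahane-style argument.

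For the first step, I would define the twisted ergodic integral
$$\Sk^x_T(\One_{\Xx_a},\om) := \int_0^T e^{-2\pi i\om t}\One_{\Xx_a}(h_t x)\,dt,$$
which plays the role of $S^x_N$ for the flow. A continuous-time analog of Lemma~\ref{lem-var} reduces \eqref{holder1} to a uniform estimate $|\Sk^x_T|\le CT^{1-\gamma/2}$ for $x\in\Xxi^{\vec{s}}$, $|\om|\in[B^{-1},B]$, and large $T$. Using the Dumont--Thomas/Rauzy prefix--suffix decomposition (Lemma~\ref{lem-accord}) to cut the orbit segment into substitution-generated pieces, a computation parallel to the proof of Proposition~\ref{prop-Dioph}, with the tiling lengths $|\zeta^j(u_i^{(b)})|_{\vec{s}}$ replacing the word lengths in the phases of $\M_n(\om)$, yields
$$|\Sk^x_T(\One_{\Xx_a},\om)|\le C'\,T\prod_{k=0}^{\lfloor\log_\theta T-C_2\rfloor}\!\Bigl(1-c_1\bigl\|\om|\zeta^k(v)|_{\vec{s}}\bigr\|^2\Bigr),$$
for a fixed return word $v$ and constants $c_1,C',C_2$ depending only on $\zeta$.

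The theorem is thereby reduced to showing that for Lebesgue-a.e.\ $\vec{s}\in\Delta^{m-1}$ there exist $\delta,\eps>0$ and $n_0$ with
$$
\#\bigl\{0\le k\le n:\bigl\|\om|\zeta^k(v)|_{\vec{s}}\bigr\|\ge\delta\bigr\}\ge\eps n\qquad\forall\,\om\in[B^{-1},B],\ n\ge n_0,
$$
for then the product above is at most $(1-c_1\delta^2)^{\eps n}$, and \eqref{holder1} holds with $\gamma=-2\eps\log_\theta(1-c_1\delta^2)$. Since $P_\Sf$ is irreducible, $\Sf$ has $m$ distinct eigenvalues and the spectral decomposition gives $|\zeta^k(v)|_{\vec{s}}=\sum_{j=1}^m\theta_j^k\,\phi_j(\vec{s},v)$, where each $\phi_j(\cdot,v)$ is a linear form in $\vec{s}$ that vanishes on at most a proper hyperplane; the hypothesis $|\theta_2|>1$ is precisely what makes the sub-leading terms grow exponentially and underlies the required Diophantine irregularity.

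The hardest step is establishing this density condition \emph{uniformly} in $\om$. For each individual $\om$ it follows from standard Weyl equidistribution applied to the linear map $\vec{s}\mapsto\om|\zeta^k(v)|_{\vec{s}}$, but interchanging ``for a.e.\ $\vec{s}$'' and ``for all $\om$'' is subtle. I would proceed by an Erd\H{o}s--Kahane argument parallel to (but lighter than) the one underlying Theorem~\ref{th-holder2}: for fixed $\om$, $n$, and a pattern $K\subset\{0,\dots,n\}$ of indices at which $\|\om|\zeta^k(v)|_{\vec{s}}\|<\delta$, I would count the admissible integer parts $N_k := \lfloor\om|\zeta^k(v)|_{\vec{s}}+\tfrac12\rfloor$ for $k\in K$; the spectral decomposition and $|\theta_2|>1$ force the compatible $\vec{s}$ into a set whose $(m-1)$-dimensional measure decays super-exponentially in $|K|$, since varying $\vec{s}$ in the $\vec{v}_2$-direction spreads the $N_k$-values by factors of $|\theta_2|^k$. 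To upgrade to uniformity in $\om\in[B^{-1},B]$, I would discretize $\om$ on a net of size $O(\theta^{n})$ --- the natural scale on which $\om\mapsto\om|\zeta^k(v)|_{\vec{s}}$ varies by $O(1)$ mod $1$ for all $k\le n$ --- and absorb the resulting combinatorial overhead into the super-exponential decay of the bad set, concluding by Borel--Cantelli.
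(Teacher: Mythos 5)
Your first two steps coincide with the paper's: the flow analogue of the matrix Riesz product bound is Proposition~\ref{prop-Dioph0}, the continuous-time version of Lemma~\ref{lem-var} is Lemma~\ref{lem-var2}, and the reduction to a positive-density condition on $\{\om|\zeta^k(v)|_{\vec{s}}\}$ modulo $1$, with $\gamma=-2\eps\log_\theta(1-c_1\delta^2)$, is exactly how the paper proceeds. The gap is in your final step, which is where all the difficulty lies. You propose to bound the bad set of $\vec{s}$ for a single fixed $\om$ and then take a union over a net of $\asymp\theta^{n}$ values of $\om$ (that is indeed the scale needed to control $\om\mapsto\om|\zeta^k(v)|_{\vec{s}}$ mod $1$ for all $k\le n$). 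But the fixed-$\om$ bad set does not decay super-exponentially, as you assert: the Erd\H{o}s--Kahane bookkeeping confines the compatible $\vec{s}$ to roughly $C\binom{n}{\lceil \eps n\rceil}L^{O(\eps n)}$ boxes, and each box, intersected with the simplex $\Delta^{m-1}$ (whose normalization absorbs the Perron direction), has $(m-1)$-measure $\asymp\prod_{j=2}^{m-q}|\theta_j|^{-n}$ --- exponential decay at rate $\prod_{j=2}^{m-q}|\theta_j|$, which is strictly less than $\theta$ whenever $m-q-1=1$, e.g.\ for every two-letter example (for $\zeta(1)=1222$, $\zeta(2)=1$ one has $|\theta_2|\approx1.30<\theta\approx2.30$). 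Hence (number of net points)$\,\times\,$(fixed-$\om$ bad measure) is $\gtrsim(\theta/|\theta_2|)^{n}\to\infty$ and the union bound over the net gives nothing.

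The paper's way around this (Proposition~\ref{prop-EKvar}) is to treat $\om$ as an unknown on the same footing as $\vec{s}$ and never discretize it. Two observations make this work. First, the integer parts $K_n$ of $\om\sum_j a_j\theta_j^n$ satisfy an approximate linear recursion with characteristic polynomial $P_{\Sf}$ (Lemma~\ref{lem-step}) that is independent of both $\om$ and $\vec{s}$, so the number of admissible sequences $(K_1,\dots,K_N)$ over \emph{all} $(\om,\vec{s})$ simultaneously is only $C_5\binom{N}{\lceil N/k\rceil}L^{(m+1)\lceil N/k\rceil}$; the dependence on $\om$ is absorbed into the boundedly many choices of the initial block $(K_1,\dots,K_m)$. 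Second, from $\vec{K}_{N-m+1}$ one recovers not $\vec{s}$ but the $\om$-independent ratios $a_j=\theta_j^{-n}[\Theta^{-1}\vec{K}_n]_j\big/\theta_1^{-n}[\Theta^{-1}\vec{K}_n]_1$, pinned down to precision $|\theta_j|^{-N}$ in the expanding directions (inequality (\ref{eq5})). This produces covers of the exceptional set by sub-exponentially many balls of radius $\asymp|\theta_{m-q}|^{-N}$, which yields not merely Lebesgue-nullity but the Hausdorff-dimension bound of Theorem~\ref{th-holder2}, of which Theorem~\ref{th-holder1} is an immediate corollary. A secondary point: to obtain a single exponent $\gamma$ depending only on $\zeta$, as the statement requires, you must fix $\delta$ and $\eps$ in advance (the paper takes $\delta=\rho$ from (\ref{def-Lrho}) and $\eps=1/k$) rather than allowing them to depend on $\vec{s}$.
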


In fact, we get an estimate of the dimension of the exceptional set of suspension flows.

\begin{theorem} \label{th-holder2}
Let $\zeta$ be a primitive aperiodic substitution on $\Ak=\{1,\ldots,m\}$, with substitution matrix $\Sf$. Suppose that $P_{\Sf}(t)$ is irreducible, and there are exactly $q$ eigenvalues of absolute value
$\le 1$, for some $0 \le q< m-1$.  Then for every $\eta>0$ there exists $\gam = \gam(\eta)>0$ and an exceptional set $\Ek_\eta \subset \Delta^{m-1}$ of Hausdorff dimension at most $q+\eta$ such that for every
$\vec{s} \in \Delta^{m-1} \setminus \Ek_\eta$ and any $B>1$ there exist 
$r_0=r_0(\vec{s},B)>0$ and 
$C=C(\vec{s},B)>0$ 
for which
we have the estimate (\ref{holder1}).
\end{theorem}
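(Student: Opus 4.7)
My plan is to combine a suspension-flow version of Proposition~\ref{prop-Dioph} with an Erd\H{o}s--Kahane-style dimension bound. The first step is to set up the matrix Riesz product representation of the spectral measures $\sig_a$ of $(\Xxi^{\vec{s}}, h_t)$, paralleling Section~2 but with word lengths $|\zeta^k(v)|$ replaced throughout by the tiling lengths $|\zeta^k(v)|_{\vec{s}} = \langle\Sf^k\vec{\ell}(v),\vec{s}\rangle$; this is the appropriate substitute once the $\Z$-action is re-parametrized by the piecewise-constant roof. Running the proof of Proposition~\ref{prop-Dioph} and Corollary~\ref{cor-Dioph} in this setting reduces the H\"older estimate~(\ref{holder1}) to a Diophantine claim: for every $\eta>0$ there should exist $\delta,\eps>0$ such that the exceptional set
\[
\Ek_\eta := \Bigl\{\vec{s}\in\Delta^{m-1}:\ \exists\,\om\neq 0\ \mbox{with}\ \liminf_{n\to\infty}\tfrac{1}{n}\#\{k\le n:\,\|\om L_k(\vec{s})\|\ge\delta\}\le \eps\Bigr\},
\]
with $L_k(\vec{s}) := |\zeta^k(v)|_{\vec{s}}$, has Hausdorff dimension at most $q+\eta$.

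Next I would exploit the linear-recurrence structure of $L_k(\vec{s})$: since $\Sf$ is an integer matrix with irreducible characteristic polynomial $P_\Sf(t) = t^m-a_1t^{m-1}-\cdots-a_m\in\Z[t]$, the vectors $\vec{n}_k := \Sf^k\vec{\ell}(v)\in\Z^m$ satisfy $\vec{n}_{k+m} = a_1\vec{n}_{k+m-1}+\cdots+a_m\vec{n}_k$, and hence $L_k(\vec{s})$ obeys the same integer recurrence. Choose $\delta$ so small that $\delta(1+\sum|a_j|)<1/2$; then on any $m+1$ consecutive indices $k$ where $\|\om L_k(\vec{s})\|<\delta$, the nearest-integer sequence $N_k\in\Z$ must satisfy $N_{k+m}-a_1N_{k+m-1}-\cdots-a_mN_k=0$ exactly. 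By picking $\eps$ small enough (depending only on $m$), such consecutive stretches must occur at arbitrarily large scales, which pins down $(N_k)_{k\ge 0}$ globally as an integer solution of the recurrence, and therefore as $N_k = \sum_{i=1}^m\alpha_i\mu_i^k$ where $\mu_1=\theta,\mu_2,\ldots,\mu_m$ are the Galois-conjugate roots of $P_\Sf$.

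The dimension estimate will then follow from an Erd\H{o}s--Kahane covering argument in the eigenbasis of $\Sf$. Splitting $\R^m = V_+\oplus V_-$ into the expanding subspace ($|\mu_i|>1$, dimension $m-q$) and the non-expanding subspace ($|\mu_i|\le 1$, dimension $q$), the Diophantine constraint $|\om\langle\vec{n}_k,\vec{s}\rangle - N_k|<\delta$ for $k\le n$ forces the $V_+$-component of $\vec{s}$ to lie in a thin box whose width along the eigendirection corresponding to $\mu_i$ shrinks like $|\mu_i|^{-n+o(n)}$, while the $V_-$-component is only constrained to a bounded region. After fixing one of the polynomially many admissible initial tuples $(N_0,\ldots,N_{m-1})$, one covers the bad set at scale $\theta^{-n}$ by approximately $\theta^{nq+o(n)}$ boxes of diameter $\theta^{-n}$, yielding $\Dh\Ek_\eta\le q+\eta$. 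The uncountable quantifier over $\om$ is handled by first fixing $\Bu>1$ and discretizing $[\Bu^{-1},\Bu]$ at scale $\theta^{-n}$, then diagonalizing over countable sequences $\Bu_j\uparrow\infty$ and $(\delta_j,\eps_j)\downarrow 0$ to produce a single exceptional set independent of $\Bu$ and $\om$.

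The main obstacle is the Erd\H{o}s--Kahane estimate itself: one must track precisely the growth of $\vec{n}_k$ along each eigendirection, including possibly complex eigenvalues, eigenvalues of higher multiplicity, and eigenvalues with $|\mu_i|$ just above $1$, in order to produce a cover of dimension exactly $q+\eta$ rather than a weaker $q+O(1)$. Irreducibility of $P_\Sf$ is critical here: it prevents the integer sequence $(N_k)$ from being trapped in a proper $\Sf$-invariant subspace, thereby guaranteeing that knowledge of $(N_k)$ simultaneously pins down the $V_+$-component of $\vec{s}$ along \emph{every} expanding eigendirection at once, which is the mechanism by which the number of free directions is limited to exactly $q$.
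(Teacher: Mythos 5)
Your overall architecture coincides with the paper's: a suspension-flow version of Proposition~\ref{prop-Dioph} (this is Proposition~\ref{prop-Dioph0}, with $|\zeta^k(v)|$ replaced by the tiling length $|\zeta^k(v)|_{\vec{s}}$), followed by an Erd\H{o}s--Kahane covering argument in the eigenbasis of $\Sf$, in which the $m-q$ expanding directions are pinned down by the integer data $K_n$ while the $q$ non-expanding directions remain unconstrained (Proposition~\ref{prop-EKvar}); your integer recurrence for $N_k$ is exactly the mechanism of the paper's Lemma~\ref{lem-step}, phrased there via the companion matrix $\Theta\,{\rm Diag}[\theta_j]\,\Theta^{-1}$.

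There is, however, one concretely wrong step. From the existence of good stretches of length $m+1$ at arbitrarily large scales you conclude that $(N_k)_{k\ge 0}$ is \emph{globally} an exact solution $N_k=\sum_i\alpha_i\mu_i^k$ of the recurrence, with $\eps$ chosen depending only on $m$. This is false: the defining condition of the exceptional set only bounds the number of bad indices among the first $n$ by roughly $\eps n$, and these may be scattered arbitrarily. At each bad index the recurrence can fail and the sequence jumps to a different branch; there are at most a bounded number $L$ of choices per jump (Lemma~\ref{lem-step}(ii)), but also $\binom{N}{\lceil \eps N\rceil}$ ways to place the bad indices, so the number of admissible sequences $(N_1,\ldots,N_N)$ is of order $\binom{N}{\lceil \eps N\rceil}L^{O(\eps N)}=e^{c(\eps)N}$ with $c(\eps)\to 0$ as $\eps\to 0$ --- exponential with small rate, not polynomial and not $e^{o(N)}$. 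Your cover by ``polynomially many initial tuples, each determining the whole sequence'' therefore fails to cover the exceptional set, and the box count $\theta^{nq+o(n)}$ should be $\theta^{nq}e^{c(\eps)n}$; this is precisely why the conclusion is $\dim_H\Ek_\eta\le q+\eta$ with $\eps=\eps(\eta)$ depending on $\eta$ (and on $L$, $|\theta_{m-q}|$), not only on $m$. Two smaller points: (a) the coordinate $a_j$ is recovered from $\vec{K}_n$ only as a ratio with error $O(|\theta_j|^{-n})$, which requires two-sided lower bounds on $|[\Theta^{-1}\vec{K}_n]_1|$ and a separate treatment of indices $j$ with $\langle\vec{e}_j,\vec{s}\rangle=0$ (the paper's decomposition over subsets $\Jk$); (b) irreducibility is used to guarantee $\langle\vec{\ell}(v),\vec{e_j^*}\rangle\ne 0$ for every $j$, i.e.\ that all eigencomponents of $\vec{\ell}(v)$ are present; it does not prevent $\langle\vec{e}_j,\vec{s}\rangle$ from vanishing for particular $\vec{s}$.
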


Note that Theorem~\ref{th-holder1} is an immediate corollary of Theorem~\ref{th-holder2}: just choose $\eta<1$ and note that $q+\eta< m-1 = \dim(\Delta^{m-1})$. 

For the proof of Theorem~\ref{th-holder1} (and also for Theorem~\ref{th-main2} in the next section),
we need an analog of Lemma~\ref{lem-var} for flows ($\R$-actions), which has also been essentially worked out by Hof \cite{Hof}. Let $(Y,\mu,h_t)$ be a measure-preserving flow.
For $f \in L^2(Y,\mu)$, $R>0$,  $\om \in \R^d$, and $y\in Y$ let
$$
G_R(f,\om) = R^{-1} \left\|\int_0^R e^{-2\pi i \om t} f( h_t y) \,dt\right\|_{L^2(Y)}^2\  \ \mbox{and}\ \  S_R^y(f,\om) = \int_0^R e^{-2\pi i \om t} f(h_t y)\,dt,
$$
so that
\be \label{eq-GN1}
G_R(f,\om) = R^{-1} \int_Y |S_R^y(f,\om)|^2\,d\mu(y).
\ee

 \begin{lemma} \label{lem-var2}
 Let $\Om(r)$ be a continuous increasing function on $[0,1)$, such that $\Om(0)=0$, and suppose that for some fixed $\om \in \R$, $R_0\ge 1$, $C>0$, and $f\in L^2(Y,\mu)$ we have
\be \label{eq-Hof11}
G_R(f,\om) \le C R \,\Om(1/R)\ \ \mbox{for}\ R\ge R_0.
\ee
Then
\be \label{eq-Hof12}
\sig_f([\om-r,\om+r]) \le \frac{\pi^2 C}{4} \, \Om(2r) \ \ \mbox{for all}\ r \le (2R_0)^{-1}.
\ee
In particular, if
\be \label{eq-SR}
\sup_{y\in Y} |S^y_R(f,\om)| \le R \sqrt{C\Om(1/R)}\ \ \mbox{for}\ R\ge R_0,
\ee
then (\ref{eq-Hof12}) holds. 
 \end{lemma}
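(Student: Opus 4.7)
My plan is to mimic the proof of Lemma~\ref{lem-easy1}--\ref{lem-var} but with the continuous Fejér kernel in place of the discrete one. Expanding the $L^2$-norm in the definition of $G_R(f,\om)$, using Fubini and the spectral identity $\langle f\circ h_t,f\rangle = \int e^{2\pi i\xi t}\,d\sig_f(\xi)$, I obtain
\[
G_R(f,\om) \;=\; \int_{\R} F_R(\xi-\om)\,d\sig_f(\xi), \quad F_R(u) \;:=\; \frac{\sin^2(\pi u R)}{R\pi^2 u^2},
\]
where $F_R$ is the continuous (line) Fejér kernel; in particular $F_R\ge 0$ is integrable with $\int F_R = 1$. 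This is the $\R$-analog of the identity used in the proof of Lemma~\ref{lem-easy1}.

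Next I would estimate $F_R$ from below on a window of size $r = 1/(2R)$: for $|u|\le r$ we have $\pi|u|R\le \pi/2$, so the elementary bound $|\sin x|\ge (2/\pi)|x|$ gives $\sin^2(\pi u R)\ge 4u^2R^2$, hence $F_R(u)\ge 4R/\pi^2$. Keeping only the mass of $\sig_f$ in $[\om-r,\om+r]$ in the integral representation yields
\[
\sig_f([\om-r,\om+r]) \;\le\; \frac{\pi^2}{4R}\,G_R(f,\om), \qquad r=\frac{1}{2R}.
\]

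Now, given $r\le (2R_0)^{-1}$, I would choose $R=1/(2r)\ge R_0$ so that $1/R = 2r$. The hypothesis \eqref{eq-Hof11} then gives $G_R(f,\om)\le C R\,\Om(2r)$, and substituting into the previous display yields the desired bound \eqref{eq-Hof12}. Finally, the "in particular" clause follows immediately from \eqref{eq-GN1}, since a uniform pointwise bound $|S^y_R(f,\om)|\le R\sqrt{C\,\Om(1/R)}$ implies, after squaring and averaging over $y$, exactly \eqref{eq-Hof11}.

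There is no real obstacle here; the only points that require mild care are verifying the Fubini step that produces the kernel representation of $G_R(f,\om)$ and noting that, because $R$ is a continuous parameter, we may take $R=1/(2r)$ exactly and thus obtain $\Om(2r)$ in \eqref{eq-Hof12} rather than the $\Om(3r)$ appearing in the discrete Lemma~\ref{lem-var}.
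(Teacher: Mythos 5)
Your proof is correct and follows essentially the same route as the paper's: expand $G_R(f,\om)$ via the spectral identity to get the convolution with the continuous Fej\'er kernel $K_R$, bound $K_R$ below by $4R/\pi^2$ on $[-1/(2R),1/(2R)]$, and take $R=1/(2r)$ exactly (which is indeed why the flow version gets $\Om(2r)$ instead of the $\Om(3r)$ of the discrete case). The reduction of the ``in particular'' clause to (\ref{eq-GN1}) is also exactly as in the paper.
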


The proof, which  follows Hof \cite{Hof}, is deferred to the Appendix.

\medskip

Next, consider an analog of (\ref{def-Phi}): for $v=v_0\ldots v_{N-1}\in \Ak^+$ let
\be \label{def-Phi3}
\Phi_a^{\vec{s}}(v,\om) = \sum_{j=0}^{N-1} \delta_{v_j,a} \exp(-2\pi i \om |v_0\ldots v_j|_{\vec{s}}).
\ee
Then 
\be \label{SR1}
S_R^{(x,0)}(\One_{\Xx_a},\om) = \frac{1-e^{-2\pi i \om s_a}}{2\pi i \om} \cdot {\Phi_a^{\vec{s}}(x[0,N-1],\om)}\ \ \ \mbox{for}\ \ R = \left|x[0,N-1]\right|_{\vec{s}}.
\ee
Moreover,
\be \label{SR2}
\left|S_{R+t}^{(x,t)}(\One_{\Xx_a},\om)-S_R^{(x,0)}(\One_{\Xx_a},\om)\right| \le t,\ \ \mbox{for}\ 0 < t \le s_{x_0},
\ee
and
\be \label{SR3}
\left|S_R^{(x,t)}(\One_{\Xx_a},\om)-S_{R'}^{(x,t)}(\One_{\Xx_a},\om)\right| \le |R-R'|.
\ee
%A similar formula for tilings in $\R^d$ is derived in the next section, see (\ref{longest1}).
We have an  analog of Proposition~\ref{prop-Dioph} for suspension flows:

\begin{prop}  \label{prop-Dioph0}
Let $\zeta$ be a primitive substitution on $\A$ and $v$ a return word starting with $c\in\A$ such that $vc$ occurs as a subword in  $\zeta(b)$ for every $b\in \A$. Let $\vec{s} \in \Delta^{m-1}$.
Then there exist $c_1\in (0,1)$ and $C,C',C_2>0$, depending only on the substitution $\zeta$ and $\min_j s_j$, such that 

{\bf (i)} for all $a,b\in \A$, $n\in \Nat$, and $\om\in \R$,
\be \label{eq-new19}
|\Phi_a^{\vec{s}}(\zeta^n(b),\om)|\le C |\zeta^n(b)|_{\vec{s}} \cdot \prod_{k=0}^{n-1} \Bigl(1 - c_1\bigl\|\om\,|\zeta^k(v)|_{\vec{s}}\bigr\|^2\Bigr);
\ee

{\bf (ii)} for all $R>1,\ \om \in \R$, and $a\in \A$,
\be \label{eq-matrest10}
|S^{(x,t)}_R(\One_{\Xx_a},\om)| \le C' R\prod_{k=0}^{\lfloor \log_\theta R- C_2\rfloor} (1 - c_1\bigl\|\om |\zeta^k(v)|_{\vec{s}}\bigr\|^2)\ \ \ \mbox{for all}\ \ (x,t) \in \Xxi^{\vec{s}}.
\ee
%and
%\be \label{eq-matrest}
%G_N(\One_{[a]},\om)\le (C')^2 N\prod_{k=0}^{\lfloor c_2\log N\rfloor} \left(1 - c_1\bigl\|\om |\zeta^k(v)|\bigr\|^2\right)^2.
%\ee
\end{prop}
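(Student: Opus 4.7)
The plan is to adapt the proof of Proposition~\ref{prop-Dioph} essentially verbatim, replacing the word length $|\cdot|$ by the tiling length $|\cdot|_{\vec{s}}$ throughout, and to handle the reduction from the continuous Birkhoff integral to the discrete character sum $\Phi_a^{\vec{s}}$ by means of (\ref{SR1})--(\ref{SR3}).

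For part~(i) I would first set up the $\vec{s}$-analog of the matrix Riesz product by defining
\[
(\M^{\vec{s}}_{n-1}(\om))(b,c) = \sum_{j\le k_b:\ u^{(b)}_j=c} \exp\bigl[-2\pi i\om\bigl(|\zeta^{n-1}(u_1^{(b)})|_{\vec{s}} + \cdots + |\zeta^{n-1}(u_{j-1}^{(b)})|_{\vec{s}}\bigr)\bigr].
\]
The identity $\Phi_a^{\vec{s}}(uv,\om) = \Phi_a^{\vec{s}}(u,\om) + e^{-2\pi i\om|u|_{\vec{s}}}\Phi_a^{\vec{s}}(v,\om)$ then yields the matrix recursion $\vec{\Psi}^{(a),\vec{s}}_n = \M^{\vec{s}}_{n-1}\cdots \M^{\vec{s}}_0 \bbe_a$ exactly as in (\ref{eq-matr2}). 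Writing $\zeta(b)=p^{(b)}vcq^{(b)}$ as in (\ref{eq-ret}), the two entries of $\M^{\vec{s}}_n(b,c)$ corresponding to the $c$ beginning $v$ and to the $c$ immediately following $v$ differ in phase by exactly $e^{-2\pi i\om|\zeta^n(v)|_{\vec{s}}}$, so the elementary inequality (\ref{lem-elem1}) gives
\[
|\M^{\vec{s}}_n(b,c)| \le \Sf^t(b,c) - \tfrac{1}{2}\bigl\|\om|\zeta^n(v)|_{\vec{s}}\bigr\|^2.
\]
The remaining argument --- the componentwise inequality (\ref{eq-useful}), iteration against the all-ones vector, and the Perron-Frobenius bound $|\zeta^n(b)|_{\vec{s}}\asymp \theta^n$ with constants depending only on $\zeta$ and $\min_j s_j$ (using that $\vec{s}\in\Delta^{m-1}$) --- is identical to Step~2 of Proposition~\ref{prop-Dioph}, and produces (\ref{eq-new19}).

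For part~(ii) I would first reduce the continuous integral to a discrete character sum. Given $(x,t)\in \Xxi^{\vec{s}}$ and $R>1$, choose $N$ so that $|x[0,N-1]|_{\vec{s}} \le R+t < |x[0,N]|_{\vec{s}}$; then (\ref{SR1})--(\ref{SR3}) imply
\[
\Bigl|S^{(x,t)}_R(\One_{\Xx_a},\om) - \tfrac{1-e^{-2\pi i\om s_a}}{2\pi i\om}\Phi_a^{\vec{s}}(x[0,N-1],\om)\Bigr| \le 3\max_j s_j,
\]
and the prefactor is bounded by $s_a\le 1$. Next I would run the prefix-suffix decomposition (Lemma~\ref{lem-accord}) on $x[0,N-1]$ exactly as in the proof of Proposition~\ref{prop-Step1}, feeding in the bound (\ref{eq-new19}) just established; this yields
\[
|\Phi_a^{\vec{s}}(x[0,N-1],\om)| \le C_1 N\prod_{k=0}^{\lfloor \log_\theta N - C_2\rfloor}\bigl(1 - c_1\bigl\|\om|\zeta^k(v)|_{\vec{s}}\bigr\|^2\bigr).
\]
Since $N\asymp R$ with comparability constants depending only on $\min_j s_j$ and $\max_j s_j\le 1$, replacing $\log_\theta N$ by $\log_\theta R$ costs only a bounded shift that can be absorbed into $C_2$. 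Together with the additive error above, this gives (\ref{eq-matrest10}).

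The main obstacle is almost entirely bookkeeping: tracking the additive ``edge'' errors introduced at the endpoints of $[0,R]$ in the passage from $S^{(x,t)}_R$ to $\Phi_a^{\vec{s}}$, and verifying carefully that the truncation index and all implicit constants depend only on $\zeta$ and $\min_j s_j$, as the statement requires. No analytic ingredient beyond those already assembled in Propositions~\ref{prop-Step1} and~\ref{prop-Dioph} is needed.
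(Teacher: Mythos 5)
Your proposal is correct and follows essentially the same route as the paper: the paper's own proof consists precisely of replacing word length by the tiling length $|\cdot|_{\vec{s}}$ in Proposition~\ref{prop-Dioph}, using the matrices $\M^{\vec{s}}_\ell(\om)$ of (\ref{matrices2}) for part (i), and deducing (ii) via the analog of Proposition~\ref{prop-Step1} together with (\ref{SR1})--(\ref{SR3}). You have simply filled in the details that the paper leaves implicit, and your bookkeeping of the edge errors and of the dependence of constants on $\min_j s_j$ is consistent with the paper's remark that the constants depend on the ratio $\max_j s_j/\min_j s_j\le (\min_j s_j)^{-1}$.
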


\begin{proof}[Proof]
The proof is similar to that of Proposition~\ref{prop-Dioph}. Just replace the usual length of words by their ``tiling length,'' defined in (\ref{tilength}).
The constants in the proof will depend on the ratio $$\frac{\max_j s_j}{\min_j s_j}\le (\min_j s_j)^{-1}.$$
The implication (i) $\Rightarrow$ (ii) is obtained as in Step 1 of the proof of Proposition~\ref{prop-Dioph}, using an obvious analog
of Proposition~\ref{prop-Step1}, as well as (\ref{SR1})-(\ref{SR3}). For the proof of (i) we use the equality, obtained repeating the arguments from Section 2.3:
$$
\Phi_a^{\vec{s}}(\zeta^n(b),\om) =\left\langle\M_{n-1}^{\vec{s}}(\om)\cdots \M_0^{\vec{s}}(\om)\vec{e}_a,\vec{e}_b\right\rangle,
$$
where
\be \label{matrices2}
(\M_{\ell}^{\vec{s}}(\om))(b,c) = \sum_{j \le k_b:\ u_j^{(b)} = c} \exp\left[-2\pi i \om |\zeta^{\ell}(u_1^{(b)}\ldots u_{j-1}^{(b)})|_{\vec{s}}\right],\ \ \ell\in \Nat,
\ee
by analogy with (\ref{matr}). The matrix product is then estimated as in the proof of Proposition~\ref{prop-Dioph}.
\end{proof}

\begin{proof}[Proof of Theorem~\ref{th-holder2}]
Recall that, passing to a power $\zeta^\ell$ if necessary, we can always obtain a return word $v$ as in the statement of Proposition~\ref{prop-Dioph0}, and the existence of such a word (for $\zeta$ itself)  will be the standing assumption until the end of the section.

%In view of Proposition~\ref{prop-Dioph0} and Lemma~\ref{lem-varr}, the theorem will follow once we demonstrate that for any $\eta>0$ there exist $\delta>0$ and 
%a set $\Ek_\eta\subset \Delta_{m-1}$ of Hausdorff dimension at most
%$q+\eta$ (recall that $q$ is the number of eigenvalues of $\Sf$ inside the closed unit circle), such that 
%\begin{eqnarray}
%& &  \forall\, \vec{s}\not\in \Ek_\eta,\ \forall \Bu>1,\ \exists\, N_0(\vec{s},\Bu)\in \Nat: \nonumber \\
 %& & \max_{|\om|\in [\Bu^{-1},\Bu]} \prod_{n=1}^{N} (1-c_1\|\om|\zeta^n(v)|_{\vec{s}}\|^2)\le (1-\delta)^N,\ \ 
% N\ge N_0(\vec{s},\Bu), \label{equ1}
 %\end{eqnarray}
%where $v$ is a return word from Proposition~\ref{prop-Dioph0}. (Recall that, passing to a power $\zeta^\ell$, if necessary, we can always assume that such a return word exists.) We will get the estimate (\ref{holder1}) with $\gam = -2c_2 \log(1-\delta)>0$.

Let $\theta_1=\theta, \theta_2,\ldots,\theta_m$ be the eigenvalues of the  substitution matrix $\Sf$, ordered by magnitude, and let $\vec{e^*_j}$ be the corresponding eigenvectors of its transpose $\Sf^t$ (real and complex). By the assumptions on the matrix $\Sf$, it is diagonalizable over $\C$ and 
$$
|\theta_{m-q}|>1,\ \ \ |\theta_{m-q+1}|\le 1
$$
(we do not exclude the possibility of $q=0$; in that case the second inequality is vacuous).
Let $\{\vec{e}_j\}_1^m$ be the dual basis, i.e.\ $\vec{e}_j$ is the eigenvector of $\Sf$ corresponding to $\theta_j$ and $\langle \vec{e}_i, \vec{e_j^*}\rangle = \delta_{ij}$. %(The notation $\vec{e}_j$ clashes with that of Section 2.3, but this should not lead to a confusion.) 
Then $\vec{s} = \sum_{j=1}^m \langle \vec{e}_j,\vec{s}\rangle \vec{e_j^*}$, hence
$$
|\zeta^n(v)|_{\vec{s}}=\langle \vec{\ell}(\zeta^n(v)), \vec{s}\rangle = \langle \Sf^n \vec{\ell}(v),\vec{s}\rangle = \sum_{j=1}^m \langle \vec{e}_j,\vec{s}\rangle \, \langle \vec{\ell}(v),\vec{e_j^*}\rangle \,\theta_j^n,\  \ n\ge 0.
$$
Let
\be \label{coord}
b_j = \langle \vec{e}_j,\vec{s}\rangle \, \langle \vec{\ell}(v),\vec{e_j^*}\rangle,\ j=1,\ldots,m,
\ee
so that
\be \label{coord2}
|\zeta^n(v)|_{\vec{s}}= \sum_{j=1}^m b_j \theta_j^n.
\ee
We always have $b_1>0$, since $\theta_1$ is the Perron-Frobenius eigenvalue, both eigenvectors $\vec{e}_1$ and $\vec{e_1^*}$ are strictly positive, $\vec{s}$ is strictly positive, and $\vec{\ell}(v)\ne \vec{0}$ is
non-negative. Further, since $\vec{\ell}(v)$ is an integer vector and the characteristic polynomial of $\Sf$ is irreducible, we have $\langle \vec{\ell}(v),\vec{e_j^*}\rangle \ne 0$ for all $j\le m$. Indeed, otherwise $\Sf$ would have a rational invariant subspace, spanned by $\Sf^n \vec{\ell}(v),\ n\ge 0$, of dimension less than $m$, contradicting the fact that its eigenvalues are algebraic integers of degree $m$. Note also that
$b_{j'}=\ov{b_j}$ for $\theta_{j'} = \ov{\theta_j}$.
Let
$$
\Hk^{m-1} = \{(a_1,\ldots,a_m)\in \C^{m}:\ a_1=1,\ a_{j'} = \ov{a_j}\ \mbox{for}\ \theta_{j'} = \ov{\theta_j}\},
$$
let $P_{m-q}$ be the projection from $\Hk^{m-1}$ to the subspace spanned by the first $m-q$ coordinates, and let $\Hk^{m-q-1} = P_{m-q}\Hk^{m-1}$. It is clear that $\Hk^{m-1}$ is a real affine-linear space of dimension $m-1$ and $\Hk^{m-q-1}$ is a real affine-linear space
of dimension $m-q-1$.  It is convenient to pass from $\Delta^{m-1}$ to a subset of $\Hk^{m-1}$ when parametrizing the suspension flows. To this end, consider the map $\Fk:\,\C^{m-1} \to \C^{m-1}$ given by
\be \label{def-Fk}
\Fk(\vec{s}) = \left(\frac{\langle \vec{e}_j,\vec{s}\rangle \, \langle \vec{\ell}(v),\vec{e_j^*}\rangle}{\langle \vec{e}_1,\vec{s}\rangle \, \langle \vec{\ell}(v),\vec{e_1^*}\rangle}\right)_{1\le j \le m}.
\ee
The map $\Fk$ is by definition  a change of basis transformation, followed by an invertible diagonal map. Consequently, the map $\Fk$ is linear and invertible.  
Let $\wtil{\Delta}^{m-1}:= \{\vec{y}\in \R^m:\ \sum_{j=1}^m y_j =1\}$ be the affine-linear space spanned by $\Delta^{m-1}$.
Note that $\Fk(\wtil{\Delta}^{m-1})\subset \Hk^{m-1}$, and since both $\wtil{\Delta}^{m-1}$ and $\Hk^{m-1}$ are affine-linear spaces of real dimension $m-1$, we have that $\Fk|_{\wtil{\Delta}^{m-1}}$ is a real affine-linear invertible map onto $\Hk^{m-1}$, which preserves Hausdorff dimension.

The following proposition contains the core of the proof of Theorem~\ref{th-holder2}. We will need the Vandermonde matrix
\be \label{Vand}
\Theta = \left( \begin{array}{ccc} 1 & \ldots & 1 \\ \vdots & \ddots & \vdots \\  \theta_1^{m-1} & \ldots & \theta_m^{m-1} \end{array} \right)
\ee
and its $\ell^\infty$ operator norm $\|\Theta\|_\infty$; note that $\Theta$ is invertible, since all $\theta_j$ are distinct. 

\begin{prop} \label{prop-EKvar} Let $\Bu>1$ and  $k\in \Nat$. Consider two constants, depending only on the substitution matrix (actually, on
$\Theta$), defined as follows:
\be \label{def-Lrho}
\rho:= \half (1+\theta_1\|\Theta\|_\infty\|\Theta^{-1}\|_\infty)^{-1}\ \ \ \mbox{and}\ \ \ L:=2+\theta_1\|\Theta\|_\infty\|\Theta^{-1}\|_\infty.
\ee
Let $E_k^N(B)$ be the set of
$(a_1,\ldots,a_{m-q})\in \Hk^{m-q-1}$  such that there exist $\om\in [\Bu^{-1},\Bu]$ and $a_{m-q+1},\ldots,a_m$, with $(a_1,\ldots,a_m)\in \Fk(\Delta^{m-1})$, for which
\be \label{eq2}
 \card \Bigl\{n\in [1,N]:\ \Bigl\|\om\sum_{j=1}^m a_j \theta_j^n\Bigr\|\ge \rho\Bigr\} < \frac{N}{k}\,.
\ee
Further, let $E_k(B):=\bigcap_{N_0=1}^\infty \bigcup_{N=N_0}^\infty E_k^N(B)$. 
Then 
\be \label{upper}
\dim_H (E_k(B)) \le  \frac{\log[2 L^{m+1} k]}{k\log|\theta_{m-q}|}\,.
\ee
%We do not exclude the case $q=0$.
\end{prop}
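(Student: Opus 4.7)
The plan is to adapt the Erd\H{o}s--Kahane argument from Bernoulli convolution theory to the present multi-dimensional linear recurrence setting. For each $\vec{a}\in E_k^N(B)$ with witnesses $\omega\in[B^{-1},B]$ and $a_{m-q+1},\ldots,a_m$, I would associate the \emph{integer itinerary} $(p_n)_{n=0}^N$, where $p_n$ is the nearest integer to $r_n:=\omega\sum_{j=1}^m a_j\theta_j^n$. The key observation is that the sequence $(r_n)$ satisfies the integer linear recurrence given by the characteristic polynomial $P_\Sf$; the associated companion-matrix shift $T$ on $(r_{n-m+1},\ldots,r_n)$ is conjugate via $\Theta$ to $D=\mathrm{diag}(\theta_1,\ldots,\theta_m)$, so $\|T\|_\infty\le\theta_1\|\Theta\|_\infty\|\Theta^{-1}\|_\infty=L-2$. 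The proof then splits into two pieces: a combinatorial upper bound of order $(2L^{m+1}k)^{N/k}$ on the number of admissible itineraries, and a Vandermonde-based localization argument showing that the preimage of each itinerary in $\mathcal{H}^{m-q-1}$ has diameter $\lesssim|\theta_{m-q}|^{-N}$. The dimension bound (\ref{upper}) then follows by the standard limsup-set estimate for Hausdorff dimension.

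For the counting I would distinguish two regimes. Unconditionally, given $(p_{n-m+1},\ldots,p_n)$, the shift bound places $r_{n+1}$ into an interval of length $\le L-2$, so the nearest integer $p_{n+1}$ admits at most $L$ values. If instead the entire window $r_{n-m+1},\ldots,r_{n+1}$ consists of \emph{good} indices (meaning $|r_i-p_i|<\rho$), the improved shift bound yields $|r_{n+1}-(T\vec{p})_m|<(L-2)\rho$; combining with $|r_{n+1}-p_{n+1}|<\rho$ then forces $|p_{n+1}-(T\vec{p})_m|<(L-1)\rho=\tfrac{1}{2}$, so that $p_{n+1}$ is the unique nearest integer to the fixed real number $(T\vec{p})_m$. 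This uniqueness of continuation in all-good windows is precisely the point of the calibration (\ref{def-Lrho}), and I expect it to be the main technical obstacle: one must verify $\|T\|_\infty\le L-2$ via the similarity $T=\Theta D\Theta^{-1}$, and check that the two strict inequalities defining "good" combine cleanly to give an open admissible interval of length less than one.

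The combinatorial bound now proceeds by choice of the good set $S\subset[1,N]$. For $\vec{a}\in E_k^N(B)$ one has $|S^c|<N/k$, and a step $n\to n+1$ is \emph{free} (costing the worst-case $L$ choices) only when the window $\{n-m+1,\ldots,n+1\}$ meets $S^c$. Since each element of $S^c$ lies in at most $m+1$ such windows, the number of free steps is at most $(m+1)|S^c|<(m+1)N/k$, giving at most $L^{(m+1)N/k}$ admissible itineraries for fixed $S$. Summing over $S$ via $\binom{N}{\lfloor N/k\rfloor}\lesssim(ek)^{N/k}$, bounding initial data $(p_0,\ldots,p_{m-1})$ by a constant depending only on $B$ and $\zeta$, and tracking constants carefully produces the total count $(2L^{m+1}k)^{N/k}$ claimed in the statement.

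For the localization, the density condition guarantees a late window $[n_*,n_*+m-1]$ of $m$ consecutive good indices with $n_*\ge N-O(mk)$, provided $k$ is large compared with $m$. On this window, inverting Vandermonde gives $\omega(\theta_j^{n_*}a_j)_{j=1}^m=\Theta^{-1}(r_{n_*},\ldots,r_{n_*+m-1})^t$ with error at most $\|\Theta^{-1}\|_\infty\rho$ per coordinate. Since $\omega\in[B^{-1},B]$ is bounded, each $a_j$ is pinned to accuracy $\lesssim|\theta_j|^{-n_*}$; the worst-localized coordinate $a_{m-q}$ lies in a set of diameter $\lesssim|\theta_{m-q}|^{-N}$, while $a_j$ with $j<m-q$ is localized at strictly tighter scales. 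Hence each itinerary's preimage fits inside a single ball of radius $\lesssim|\theta_{m-q}|^{-N}$ in $\mathcal{H}^{m-q-1}$. Covering $E_k^N(B)$ by at most $(2L^{m+1}k)^{N/k}$ such balls and passing to the limsup $E_k(B)=\bigcap_{N_0}\bigcup_{N\ge N_0}E_k^N(B)$ yields (\ref{upper}).
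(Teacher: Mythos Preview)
Your overall strategy is the same Erd\H{o}s--Kahane scheme that the paper uses, and your counting argument (Lemma~\ref{lem-step} in the paper) is correct and matches the original. The genuine gap is in the localization step.

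You claim that the density condition yields a window of $m$ consecutive good indices at position $n_*\ge N-O(mk)$. This is false: the hypothesis only says there are fewer than $N/k$ bad indices in $[1,N]$ \emph{globally}, and nothing prevents all of them from clustering at the end. In the worst case the final $\lfloor N/k\rfloor$ indices are all bad, so the latest all-good $m$-window sits near $N(1-1/k)$, not near $N-O(mk)$. This would degrade the radius of your covering balls from $|\theta_{m-q}|^{-N}$ to $|\theta_{m-q}|^{-N(1-1/k)}$ and give a strictly worse dimension bound than~(\ref{upper}).

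The paper avoids this problem entirely by \emph{not} requiring a good window for localization. One always has $|\eps_n|\le 1/2$, so the Vandermonde inversion at $n=N-m+1$ gives $\omega\theta_j^{n}a_j=[\Theta^{-1}\vec K_n]_j + O(\|\Theta^{-1}\|_\infty)$ unconditionally. The second point you glossed over is that this recovers $\omega a_j$, not $a_j$; saying ``$\omega\in[B^{-1},B]$ is bounded'' does not localize $a_j$ to scale $|\theta_j|^{-n}$. The paper's device is to use $a_1=1$ and form the ratio
\[
a_j=\frac{\theta_j^{-n}[\Theta^{-1}(\vec K_n+\vec\eps_n)]_j}{\theta_1^{-n}[\Theta^{-1}(\vec K_n+\vec\eps_n)]_1},
\]
which eliminates $\omega$; the error analysis (the paper's (\ref{kapusta2})--(\ref{eq5})) then gives $|a_j-(\text{integer data})|\lesssim|\theta_j|^{-n}$ for $j\le m-q$, with no goodness assumption. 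Replace your ``late good window'' paragraph by this ratio argument at $n=N-m+1$ and the rest of your proof goes through.
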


We first derive Theorem~\ref{th-holder2} from Proposition \ref{prop-EKvar}. 
Let $$ E_k = \bigcup_{B>1} E_k(B).$$
Choose $k\in \Nat$ in such a way that $\dim_H(E_k(B)) <\eta$ for all $B>0$, which is possible since the right-hand side of (\ref{upper}) tends
to zero as $k\to\infty$.
Then we also have
$$
\dim_H(E_k)< \eta.
$$
%since we can take a union over a countable set of $B$'s.
Let
$$
\Ek_\eta:= (\Fk|_{\wtil{\Delta}^{m-1}})^{-1} P_{m-q}^{-1}(E_k).
$$
Note that $P_{m-q}^{-1}(E_k)$ is the direct product of $E_k$ with a real $q$-dimensional linear space, hence
$\dim_H(\Ek_\eta)=\dim_H(E_k)+q<\eta+q$.  We want to show that $\Ek_\eta$ is the desired exceptional set in Theorem~\ref{th-holder2}. To this end, let $\vec{s}\in \Delta^{m-1}\setminus \Ek_\eta$ and  $B>1$. Consider the coefficients $b_j$ defined by (\ref{coord}), so that (\ref{coord2}) holds; then $$\Fk(\vec{s}) = (1, b_2/b_1,\ldots, b_m/b_1)=:(a_1,\ldots,a_{m}).$$
Observe that
$$
b_1 = \langle \vec{e}_1,\vec{s}\rangle \, \langle \vec{\ell}(v),\vec{e_1^*}\rangle\in [C_3^{-1},C_3],
$$
where $C_3>1$ depends only on $\zeta$ and $v$, since 
$$
\min_j(\vec{e}_1)_j\le \langle \vec{e}_1,\vec{s}\rangle\le \max_j(\vec{e}_1)_j\ \ \mbox{for all}\ \vec{s} \in \Delta^{m-1}.
$$
By assumption,
$$(a_1,\ldots,a_{m-q}) \not\in E_k(C_3B),$$ hence there exists $N_0=N_0(\vec{s},C_3B)\in \Nat$ such that $$(a_1,\ldots,a_{m-q}) \not \in E_k^N(C_3B)$$ for all $N\ge N_0$. By the definition of $E_k^N(C_3B)$ and (\ref{coord2}), rescaling by $b_1$, we obtain that for all $\om \in [B^{-1},B]$ there are at least $\lfloor N/k\rfloor$ integers $n\in [1,N]$ for 
which $$\|\om|\zeta^n(v)|_{\vec{s}}\|\ge \rho,$$ hence
$$
\max_{|\om|\in [\Bu^{-1},\Bu]} \prod_{n=1}^{N} (1-c_1\|\om|\zeta^n(v)|_{\vec{s}}\|^2)\le (1-c_1\rho^2)^{\lfloor N/k\rfloor},\ \ \mbox{for all}\ N\ge N_0.
$$
Combined with Proposition~\ref{prop-Dioph0}(ii), this estimate implies 
$$
\sup\left\{ \bigl|S_R^{(x,t)}(\One_{\Xx_a},\om)\bigr|:\ (x,t)\in\Xxi^{\vec{s}},\ |\om|\in [B^{-1},B]\right\}\le C''R^\alpha\ \ \ \mbox{for}\ R\ge R_0(\vec{s}, B),
$$
where
$$
\alpha = 1 + \frac{\log_\theta(1-c_1\rho^2)}{k}\,.
$$
Now the claim of Theorem~\ref{th-holder2} follows from Lemma~\ref{lem-var2}, with $\Om(r) = r^{2-2\alpha}$, and it remains to prove Proposition~\ref{prop-EKvar}.
\end{proof}

\begin{proof}[Proof of Proposition~\ref{prop-EKvar}]
This is proved by a variant of the ``Erd\H{o}s-Kahane argument,'' which was invented to prove power decay of the Fourier transform of almost every Bernoulli convolution (see \cite{Erd}, \cite{Kahane}).  Here we need a generalization which differs in many details, but the basic scheme is the same.
See Appendix (Section 8.1) for a discussion of Bernoulli convolutions.

We can assume that $\Sf$ has no zero eigenvalues, that is, $\theta_m\ne 0$. Indeed, if $\theta_m=0$, we can just ignore the last coordinate and work with the vectors $(a_1,\ldots, a_{m-1})$ in (\ref{eq2}).

A  technical complication is that some of the coordinates of a point in $\Hk^{m-q-1}$ might be zero. In order to address this issue, we
let $\Upsilon$ be the collection of subsets $\Jk\subset \{2,\ldots,q\}$ which satisfy the property
$$
j\in \Jk,\ \ \theta_{j'} = \ov{\theta_j}\ \Rightarrow\ j'\in \Jk.
$$
Then we can write
$$
\Hk^{m-q-1} = \bigcup_{\Jk\in \Upsilon} \bigcup_{\beta>1} \Hk^{m-q-1}(\Jk,\beta),
$$
where
\begin{eqnarray}
\nonumber
\Hk^{m-q-1}(\Jk,\beta)  :=  \bigl\{(a_1,\ldots,a_{m-q})\in \Hk^{m-q-1}: & & 
|a_j|\in [\beta^{-1},\beta],\ \mbox{for all}\ j\in \Jk; \\ 
& & a_j=0,\ \mbox{for all}\ j\in \{2,\ldots,q\}\setminus \Jk\bigr\} \label{gluj}
\end{eqnarray}
(recall that $a_1=1$ for all points in $\Hk^{m-q-1}$). Then we define 
\be \label{def-dumb}
E_k^N(B,\Jk,\beta):= E_k^N(B) \cap \Hk^{m-q-1}(\Jk,\beta)\ \ \mbox{and}\ \ E_k(B,\Jk,\beta):= \bigcap_{N_0=1}^\infty \bigcup_{N=N_0}^\infty E_k^N(B,\Jk,\beta).
\ee
Clearly,
$$
E_k(B) = \bigcup_{\Jk\in \Upsilon} \bigcup_{n=1}^\infty E_k(B,\Jk,1/n), 
$$
hence it is enough to get the desired upper bound (\ref{upper}) for $\dim_H(E_k(B,\Jk,\beta))$ for some fixed $\Jk\ne \es$ and $\beta>0$ (if $\Jk=\es$, the set $E_k(B,\Jk,\beta)$ is trivially a singleton).

To this end, we fix a non-empty $\Jk\in \Upsilon$ and $\beta>0$, and suppose that $(a_1,\ldots,a_{m-q}) \in E_k(B,\Jk,\beta)$. Then $(a_1,\ldots,a_{m-q})$ belongs to $E_k^N(B,\Jk,\beta)$ for infinitely many $N$'s. Fix such an
$N$. By definition, there exist
$\om \in [\Bu^{-1},\Bu]$, and $a_{m-q+1},\ldots,a_m$
for which (\ref{eq2}) holds (recall that we do not exclude the case $q=0$; then no additional $a_j$'s are fixed).  Write
\be \label{eq3}
\om \sum_{j=1}^m a_j \theta_j^n = K_n +\eps_n,\ \ K_n \in \Nat,\ \ |\eps_n|\le 1/2,\ n\ge 1,
\ee
so that $\|\om \sum_{j=1}^m a_j \theta_j^n \|=|\eps_n|$.
Denote
$$
\vec{a} = \left( \begin{array}{c} a_1 \\ \vdots \\ a_m \end{array}\right),\ \ \ \vec{K}_n = \left( \begin{array}{c} K_n \\ \vdots \\ K_{n+m-1} \end{array}\right),\ \ \mbox{and}\ \  
\vec{\eps}_n = \left( \begin{array}{c} \eps_n \\ \vdots \\ \eps_{n+m-1} \end{array}\right);
$$
then  equations (\ref{eq3}) for $n, n+1,\ldots,n+m-1$ combine into 
\be \label{matr1}
\om \left( \begin{array}{ccc} \theta_1^n & \ldots & \theta_m^n \\ \vdots & \ddots & \vdots \\ \theta_1^{n+m-1} & \ldots & \theta_m^{n+m-1} \end{array} \right) \vec{a} = \vec{K}_n + \vec{\eps}_n.
\ee
%\begin{sloppypar}
Let ${\rm Diag}[\theta_j^n]$ be the diagonal matrix with the diagonal entries $\theta_1^n,\ldots,\theta_m^n$, 
then (\ref{matr1}) becomes
%\end{sloppypar}
$$
\om \,\Theta \cdot {\rm Diag}[\theta_j^n] \,\vec{a} = \vec{K}_n + \vec{\eps}_n,\ \ \ n\ge 1,
$$
where $\Theta$ is the Vandermonde matrix (\ref{Vand}).
The Vandermonde matrix is invertible, since $\theta_j$ are all distinct. Also, all $\theta_j$ were assumed nonzero, hence
\be \label{eq39}
\vec{a} = \om^{-1} {\rm Diag}[\theta_j^{-n}]\,\Theta^{-1}(\vec{K}_n + \vec{\eps}_n),\ \ \ n\ge 1.
\ee
It follows that
\be \label{eq4}
a_j =\om^{-1} \theta_j^{-n} [\Theta^{-1}(\vec{K}_n + \vec{\eps}_n)]_{_{\scriptstyle j}},\ \ \ j=1,\ldots,m,\ \ n\ge 1,
\ee
where $[\cdot]_j$ denotes the $j$-th component of a vector. %Recall that $a_1=1$ and $\om^{-1}\in [K^{-1},K]$, hence
%$$
%[\Theta^{-1}(\vec{K}_n + \vec{\eps}_n)]_{_{\scriptstyle 1}} \in [K^{-1}\theta_1^n, K \theta_1^n],\ \ \ n\ge 1,
%$$
Since $\om\in [B^{-1},B]$, $a_1=1$, and $|a_j|\in [\beta^{-1},\beta]$ for $j\in\Jk$, we have 
\be \label{kapusta0}
B^{-1} \theta_1^n \le |[\Theta^{-1}(\vec{K}_n + \vec{\eps}_n)]_{_{\scriptstyle 1}}| \le B \theta_1^n,\ \ n\ge 1,
\ee
and
\be \label{kapusta}
(\Bu\beta)^{-1} |\theta_j|^n \le |[\Theta^{-1}(\vec{K}_n + \vec{\eps}_n)]_{_{\scriptstyle j}}| \le  \Bu\beta|\theta_j|^n,\ \ \ j\in\ \Jk, \ \ n\ge 1.
\ee
From (\ref{eq4}), recalling that $a_1=1$, we obtain 
\be \label{eq-a}
a_j = \frac{\theta_j^{-n} [\Theta^{-1}(\vec{K}_n + \vec{\eps}_n)]_{_{\scriptstyle j}}}{\theta_1^{-n} [\Theta^{-1}(\vec{K}_n + \vec{\eps}_n)]_{_{\scriptstyle 1}}}\,,
\ee
and we want to show that
$a_j$ is approximately  equal to  $$\frac{\theta_j^{-n} [\Theta^{-1}\vec{K}_n ]_{_{\scriptscriptstyle j}}}{\theta_1^{-n} [\Theta^{-1}\vec{K}_n ]_{_{\scriptscriptstyle 1}}}$$ 
{for} $j \in \Jk$
{and} $n$  {sufficiently large}.
In fact,
\be\label{kapusta2}
\left| \frac{ [\Theta^{-1}(\vec{K}_n + \vec{\eps}_n)]_{_{\scriptstyle j}}}{[\Theta^{-1}(\vec{K}_n + \vec{\eps}_n)]_{_{\scriptstyle 1}}} - 
\frac{ [\Theta^{-1}\vec{K}_n ]_{_{\scriptstyle j}}}{ [\Theta^{-1}\vec{K}_n ]_{_{\scriptstyle 1}}}\right| \le  
\frac{| [\Theta^{-1} \vec{\eps}_n]_{_{\scriptstyle j}}|}{|[\Theta^{-1}(\vec{K}_n + \vec{\eps}_n)]_{_{\scriptstyle 1}}|} + 
\frac{| [\Theta^{-1} \vec{\eps}_n]_{_{\scriptstyle 1}} [\Theta^{-1}\vec{K}_n ]_{_{\scriptstyle j}}|}{|[\Theta^{-1}(\vec{K}_n + \vec{\eps}_n)]_{_{\scriptstyle 1}} [\Theta^{-1}\vec{K}_n ]_{_{\scriptstyle 1}}|}\,.
\ee
Observe that 
$$\|\Theta^{-1}\vec{\eps}_n\|_\infty \le \|\Theta^{-1}\|_\infty \|\vec{\eps}_n\|_\infty\le 
(1/2) \|\Theta^{-1}\|_\infty=:C_\Theta,$$ 
hence (\ref{kapusta0}) and (\ref{kapusta}) yield two-sided estimates of $|[\Theta^{-1}\vec{K}_n]_{_{\scriptstyle j}}|$ for $j\in \{1\}\cup \Jk,\ \ n\ge 1$.
Thus we can continue (\ref{kapusta2}) to obtain for $j\in \Jk$, $n\ge 1$:
$$
\mbox{(\ref{kapusta2})} \ \le\  \frac{C_\Theta}{B^{-1}\theta_1^n} \left( 1 + \frac{B\beta|\theta_j|^n + C_\Theta}{B^{-1}\theta_1^n - C_\Theta}\right) \le 2B C_\Theta \theta_1^{-n}
$$
for $n$ sufficiently large, depending on $\beta,B$, and $\Theta$, since $|\theta_j|<\theta_1$.  Therefore, by (\ref{eq-a}),
\be\label{eq5}
\left|a_j - \frac{\theta_j^{-n} [\Theta^{-1}\vec{K}_n ]_{_{\scriptstyle j}}}{\theta_1^{-n} [\Theta^{-1}\vec{K}_n ]_{_{\scriptstyle 1}}}\right| \le 2B C_\Theta \cdot |\theta_j|^{-n},\ \ \ j\in \Jk,\ \ n\ge n_0(\Theta,\Bu,\beta).
\ee
It is crucial, of course, that $|\theta_j|>1$ for $j\in \Jk \subset \{2,\ldots,m-q\}$. 

The last inequality will be useful a bit later. Now, comparing (\ref{eq39}) with the same equality for $n+1$, we obtain
\be \label{ura}
\vec{K}_{n+1} + \vec{\eps}_{n+1} = \Theta \,{\rm Diag}[\theta_j]\,\Theta^{-1} (\vec{K}_n + \vec{\eps}_n).
\ee

\begin{lemma} \label{lem-step}
Let $\rho$ and $L$ be the constants given by (\ref{def-Lrho}). Consider arbitrary $\om>0$ and $\vec{a} = (a_1,\ldots,a_m) \in \Hk^{m-1}$, and define $K_n,\eps_n$, $n\ge 1$, by the formula (\ref{eq3}).

{\bf (i)} if\ \ $\max\{|\eps_n|,\ldots,|\eps_{n+m}|\} <  \rho$, then $K_{n+m}$ is uniquely determined by
$K_n, K_{n+1}, \ldots, K_{n+m-1}$, independent of $\om$ and $\vec{s}$;

{\bf (ii)} given $K_n, K_{n+1}, \ldots, K_{n+m-1}$, there are at most $L$ possibilities for $K_{n+m}$.
\end{lemma}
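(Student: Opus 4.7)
The plan is to extract the last coordinate of the vector recursion (\ref{ura}) and show that it yields an integer-coefficient linear recurrence for the $K_n$'s, perturbed by the small quantities $\eps_n$. Setting $A:=\Theta\,\mathrm{Diag}[\theta_j]\,\Theta^{-1}$, the last component of (\ref{ura}) reads
$$K_{n+m}+\eps_{n+m} \;=\; [A\vec{K}_n]_m + [A\vec{\eps}_n]_m.$$
The structural input I would establish is that the last row of $A$ has integer entries. This is because $A$ is conjugate, via the change of basis $\Theta$, to the companion matrix of the characteristic polynomial $P_\Sf(t)$ of the integer matrix $\Sf$; equivalently, multiplying the relation $P_\Sf(\theta_j)=0$ by $\om a_j\theta_j^n$ and summing over $j$ yields the explicit integer recursion $K_{n+m}+\eps_{n+m} = \sum_{k=0}^{m-1} c_k(K_{n+k}+\eps_{n+k})$ with $c_k\in\Z$ the coefficients of $P_\Sf$.

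With this in hand, the identity
$$K_{n+m}-[A\vec{K}_n]_m \;=\; [A\vec{\eps}_n]_m-\eps_{n+m} \;\in\;\Z$$
becomes the workhorse. The right-hand side is controlled by an $\ell^\infty$ operator-norm estimate:
$$\bigl|[A\vec{\eps}_n]_m-\eps_{n+m}\bigr| \;\le\; \|A\|_\infty\max_{0\le j\le m-1}|\eps_{n+j}| + |\eps_{n+m}| \;\le\; M\max_{0\le j\le m-1}|\eps_{n+j}|+|\eps_{n+m}|,$$
where $M := \theta_1\|\Theta\|_\infty\|\Theta^{-1}\|_\infty$, so that $\rho = 1/(2(1+M))$ and $L=2+M$ match (\ref{def-Lrho}) exactly.

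For (i), under the hypothesis $\max\{|\eps_n|,\ldots,|\eps_{n+m}|\}<\rho$ I would conclude that the right-hand side has absolute value strictly less than $(M+1)\rho=1/2$; since the left-hand side is an integer, it is forced to vanish, giving $K_{n+m}=[A\vec{K}_n]_m$, a fixed integer combination of $K_n,\ldots,K_{n+m-1}$ depending only on $\zeta$ and not on $\om$ or $\vec{a}$. For (ii), using only the trivial bound $|\eps_j|\le 1/2$, the right-hand side lies in $[-(M+1)/2,(M+1)/2]$, a closed interval of length $M+1$, which contains at most $M+2=L$ integers, yielding at most $L$ possibilities for $K_{n+m}$.

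The only subtle point will be the integrality of the last row of $A$; once that is clearly established, the rest is a clean one-line $\ell^\infty$ estimate together with the observation that an integer of absolute value $<1/2$ must be zero. Everything else in both parts reduces to bookkeeping.
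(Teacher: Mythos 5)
Your proof is correct and follows essentially the same route as the paper: extract $K_{n+m}$ from the recursion (\ref{ura}), bound the perturbation via the $\ell^\infty$ operator norm $\theta_1\|\Theta\|_\infty\|\Theta^{-1}\|_\infty$, and count the integers compatible with that bound, which is exactly how the constants $\rho$ and $L$ in (\ref{def-Lrho}) are used. Your additional observation that $\Theta\,{\rm Diag}[\theta_j]\,\Theta^{-1}$ is the integer companion matrix of $P_{\Sf}$ is correct and upgrades part (i) to the exact identity $K_{n+m}=[\Theta\,{\rm Diag}[\theta_j]\,\Theta^{-1}\vec{K}_n]_m$, though the paper reaches the same uniqueness without it, since an integer lying within distance $<1/2$ of a quantity determined by $\vec{K}_n$ is already unique.
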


\begin{proof}
It follows from (\ref{ura}) that 
$$
\|\vec{K}_{n+1} - \Theta \,{\rm Diag}[\theta_j]\,\Theta^{-1} \vec{K}_n\|_\infty \le
\|\vec{\eps}_{n+1}\|_\infty + \theta_1\|\Theta\|_\infty\|\Theta^{-1}\|_\infty\|\vec{\eps}_n\|_\infty
$$
(we used $\|{\rm Diag}[\theta_j]\|_\infty = \theta_1$ here).
Note that $K_{n+m} = (\vec{K}_{n+1})_{m}$. In part (i) we have \\ $\max\{\|\eps_n\|,\|\eps_{ n+1}\|\}< \rho$, hence
$$
|K_{n+m} - (\Theta \,{\rm Diag}[\theta_j]\,\Theta^{-1} \vec{K}_n)_m| < \rho (1+\theta_1\|\Theta\|_\infty\|\Theta^{-1}\|_\infty) \le 1/2
$$
by (\ref{def-Lrho}), hence $K_{n+m}$ is determined uniquely, being an integer.
In part (ii) we have
$$
|K_{n+m} - (\Theta \,{\rm Diag}[\theta_j]\,\Theta^{-1} \vec{K}_n)_m| \le  (1+\theta_1\|\Theta\|_\infty\|\Theta^{-1}\|_\infty)/2, 
$$
and the number of possible integers $K_{n+m}$ is at most $1+\theta_1\|\Theta\|_\infty\|\Theta^{-1}\|_\infty+1=L$, as desired.
\end{proof}

 Now we conclude the proof of Proposition~\ref{prop-EKvar}. Recall that we are now working with the set $E_k(B,\Jk,\beta)$, see (\ref{def-dumb}), and we estimate its Hausdorff dimension from above by producing efficient covers of the sets $E^N_k(B,\Jk,\beta)$ for sufficiently large $N$. We take an arbitrary point $$(a_1,\ldots,a_{m-q}) \in E^N_k(B,\Jk,\beta)$$ and find the numbers $K_n,\eps_n$ from (\ref{eq3}). The inequality (\ref{eq5}) was proved for $n\ge n_0(\theta,B,\beta)$, and we apply it for $n = N-m+1$. Using that  $$|\theta_{m-q}| = \min_{j\le m-q}|\theta_j|>1,$$ we obtain that $(a_1,\ldots,a_{m-q})$ is contained in the closed $\ell^\infty$ ball of
 radius $2BC_\Theta\cdot |\theta_{m-q}|^{-N+m-1}$, centred at the point
 $$
 (x_1,\ldots,x_{m-q}),\  \mbox{where}\  x_1=1;\ x_j=0,\ j\not\in \Jk;\ \mbox{and}\ x_j = \frac{\theta_j^{-N+m-1} [\Theta^{-1}\vec{K}_{N-m+1} ]_{_{\scriptstyle j}}}{\theta_1^{-N+m-1} [\Theta^{-1}\vec{K}_{N-m+1} ]_{_{\scriptstyle 1}}},\ \ j\in \Jk.
 $$
The number of such balls does not exceed the number of possible vectors $\vec{K}_{N-m+1}$. This, in turn, is bounded above by the
number of possible sequences $K_1,\ldots,K_N$. 
Now we use the crucial assumption (\ref{eq2}) in the definition of the set $E_k^N(B) \supset E_k^N(B,\Jk,\beta)$. The set $$\{n\in [1,N]:\ |\eps_n| \ge \rho\}$$ has cardinality less than $N/k$, and we can enlarge it arbitrarily
to get a set $\Gam \subset [1,N]\cap \Nat$ with $\card(\Gam) = \lceil \frac{N}{k} \rceil$. There are  ${N \choose \lceil N/k \rceil}$ such subsets $\Gam$, and it remains to estimate
the number of possible sequences $K_1,\ldots,K_N$ for a fixed $\Gam$.

Since $(a_1,\ldots,a_m) \in \Fk(\Delta^{m-1})$, it follows from (\ref{def-Fk}) that there exists a constant $C_4>0$, depending only on the substitution matrix and $v$, such that 
$
|a_j| \le C_4,\ j=1,\ldots,m.
$
Further, $|\om| \in [B^{-1},B]$, hence  (\ref{eq3}) implies an upper bound 
$$
|K_n| \le BC_4 m \theta_1^n + 1,\ \ n\ge 1.
$$
Thus, there are at fewer than $C_5$ possibilities for the number of initial parts of the sequence $K_1,\ldots,K_m$, where $C_5 = (BC_4 m \theta_1^m +1)^m$.

Now we fix $\Gam\subset [1,N]\cap \Nat$ and consider those
$(a_1,\ldots,a_{m-q})$ for which
$|\eps_n|< \rho$ for $n\in [1,N]\setminus \Gamma$. 
Once $K_1,\ldots,K_n$ are determined, for $m\le n \le N-1$, we check whether $\{n-m+1,\ldots,n+1\}$ intersects $\Gam$. If it does, there are at most $L$ possibilities for $K_{n+1}$ by Lemma~\ref{lem-step}(ii).
If it does not, then there is only one choice of $K_{n+1}$. It follows that the number of sequences $K_1,\ldots,K_N$ for the given $\Gam$ does not exceed $C_5\cdot L^{(m+1)\card(\Gam)}$. 

Thus, the total number sequences, hence the balls of radius $2BC_\Theta\cdot |\theta_{m-q}|^{-N+m-1}$ needed to cover $E_k^N(B,\Jk,\beta)$ is at most
$$
C_5 {N \choose \lceil N/k \rceil} \cdot L^{(m+1)\lceil N/k\rceil }.
$$ 
Therefore,
$$
\dim_H(E_k(B,\Jk,\beta)) \le\lim_{N\to \infty} \frac{\log\left(C_5 {N \choose \lceil N/k \rceil}\cdot L^{(m+1)\lceil N/k\rceil}\right) }{-\log(2BC_\Theta\cdot |\theta_{m-q}|^{-N+m-1} )} \le \frac{\log[2L^{m+1} k]}{k\log|\theta_{m-q}|}\,.
$$
The proof of Proposition~\ref{prop-EKvar} is concluded, and Theorem~\ref{th-holder2} is now proved completely.
\end{proof}

%%%%%%%%%%%%%%%%%%%%%%%%%%%%%%%%%%%%%%%%%%%%%%%%%%%%%%%%%%%%%%%%

\section{Self-similar suspension flows}

We continue to study suspension flows $(\Xxi^{\vec{s}},h_t)$ over substitution $\Z$-actions, but now we focus on the special choice of the roof function, which makes the system ``geometrically self-similar'': namely we
assume that $\vec{s}$ is the Perron-Frobenius eigenvector of the transpose substitution matrix $\Sf^t$: $\Sf^t \vec{s} = \theta \vec{s}$. In this case
we have for every word $v$:
$$
|\zeta^n(v)|_{\vec{s}} = \langle \Sf^n \vec{\ell}(v), \vec{s}\rangle = \langle \vec{\ell}(v),(\Sf^t)^n \vec{s}\rangle = \theta^n \langle \vec{\ell}(v),\vec{s}\rangle = \theta^n |v|_{\vec{s}}.
$$
%This is one of the manifestations of geometric self-similarity. 
Self-similar suspension flows over substitutions are a special case of self-similar tiling dynamical systems, studied in \cite{SolTil,BuSol} and many other
papers. Much of what we do in this paper can be extended to the tiling setting.
Below we omit the superscript $\vec{s}$ from the notation, and let 
$(\Xxi,h_t)$ be the self-similar substitution suspension flow.

The connection between  substitutions
and self-similar substitution suspension flows can be expressed using the formalism of
Vershik's automorphisms. Recall that a general construction of Vershik \cite{Vershik}
endows an arbitrary ergodic automorphism of a Lebesgue space with a
sequence of Rokhlin towers that intersect in Markovian way; the initial
automorphism is thus represented as a Vershik automorphism of a Markov
compactum. Orbits of our automorphism are identified with leaves of the
tail equivalence relation in the Markov compactum.

In the particular case of substitutions, the construction of
Vershik and Livshits \cite{VerLiv} yields a representation of our initial substitution
automorphism as an ``adic transformation" in a special Markov compactum, the
space of one-sided infinite paths in a fixed finite graph.
Considering now the space of {\it bi-infinite} paths, we arrive at a
natural symbolic coding of the self-similar translation flow, whose orbits
are again identified with leaves of the
tail equivalence relation in the two-sided Markov compactum, see \cite{Bufetov0}.
Self-similar renormalization is effectuated by the Markov shift; note that
in the two-sided case, the invariant measure of the flow
is precisely the measure of maximal entropy for the Markov shift.

In the last section we proved H\"older continuity for almost every suspension flow. However, this says nothing about specific examples. Our next theorem is a quantitative result in the self-similar case, which is weaker 
(logarithmic modulus of continuity instead of the H\"older property), but has the advantage of being specific.

\begin{theorem} \label{th-main2}
Let $\zeta$ be a primitive aperiodic substitution on $\Ak=\{1,\ldots,m\}$, and let $\theta=\theta_1$ be the Perron-Frobenius eigenvalue of the substitution matrix.
Suppose that $\theta$ admits  a Galois conjugate $\theta_2$ outside the unit circle.
Let $(\Xxi,h_t)$ be the self-similar suspension flow over the substitution dynamical system $(X_\zeta, T_\zeta)$, and let $\sig_a$ be the spectral measure corresponding to the cylinder set $[a]$ for $a\in \Ak$.
 Then there exists $\gam>0$ such that for any $\Bu>1$ there exist $C_\Bu$ and $r_0(\Bu)$ such that
 \be \label{eq-main2}
\sig_a([\om-r,\om+r]) \le C_\Bu(\log(1/r))^{-\gam},\ \ \mbox{for all} \ 0 < r \le r_0(\Bu),\ \ a\in \Ak,\  |\om| \in [\Bu^{-1},\Bu].
\ee
\end{theorem}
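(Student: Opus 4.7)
The plan is to combine Proposition \ref{prop-Dioph0}(ii) with a quantitative arithmetic-Diophantine estimate on the sequence $\|\omega\theta^k |v|_{\vec{s}}\|$. By the self-similarity assumption $\Sf^t\vec{s} = \theta\vec{s}$, one has $|\zeta^k(v)|_{\vec{s}} = \theta^k|v|_{\vec{s}}$, so Proposition \ref{prop-Dioph0}(ii) gives, uniformly in $(x,t)\in\Xxi$,
\[
|S^{(x,t)}_R(\One_{\Xx_a},\om)|\ \le\ C''\, R\prod_{k=0}^{\lfloor\log_\theta R - C_2\rfloor}\bigl(1 - c_1\|\om\,\theta^k |v|_{\vec{s}}\|^2\bigr).
\]
Setting $y=\om|v|_{\vec{s}}$, as $|\om|$ ranges over $[\Bu^{-1},\Bu]$ the parameter $y$ ranges over a compact subset of $\R\setminus\{0\}$. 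The task is thus reduced to bounding $\prod_{k=0}^N(1-c_1\|\theta^k y\|^2)$ from above, uniformly in such $y$.

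The central step is an arithmetic-Diophantine proposition (the Proposition \ref{prop-alg} advertised in the introduction): since $\theta$ admits a Galois conjugate $\theta_2$ outside the unit circle, $\theta$ is not Pisot, and one should prove that for every compact $K\subset\R\setminus\{0\}$ there exist $\delta>0$ and $c>0$ such that for every $y\in K$ and every sufficiently large $N$,
\[
\#\bigl\{k\in[1,N]:\ \|\theta^k y\|\ge \delta\bigr\}\ \ge\ c\log N.
\]
The idea is Salem's: if the inequality fails, then the nearest-integer sequence $K_k=\lfloor \theta^k y+\tfrac12\rfloor$ satisfies, on an exceptionally long block of consecutive indices, the linear recurrence determined by the minimal polynomial of $\theta$ (with a controllable error coming from the small fractional parts). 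Applying Galois conjugation replaces $\theta$ by $\theta_2$; because $|\theta_2|>1$, the conjugate quantities grow exponentially in the block length, while the hypothesis of small fractional parts forces them to remain bounded by a quantity polynomial in $N$. For $N$ large this is a contradiction, and the quantitative form of the contradiction yields the asserted logarithmic lower bound.

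Once this key proposition is available, the rest is immediate. The bound above gives
\[
\prod_{k=0}^N\bigl(1-c_1\|\theta^k y\|^2\bigr)\ \le\ (1-c_1\delta^2)^{c\log N}\ \le\ C_\Bu\,N^{-\gam'}
\]
for some $\gam'>0$ depending only on $\zeta$ and $\Bu$. Since $N\asymp\log_\theta R$, this translates into
\[
\sup_{(x,t)\in\Xxi}|S^{(x,t)}_R(\One_{\Xx_a},\om)|\ \le\ C'''\,R\,(\log R)^{-\gam'}
\]
uniformly in $|\om|\in[\Bu^{-1},\Bu]$. Applying Lemma \ref{lem-var2} with $\Om(r)=(\log 1/r)^{-2\gam'}$ then gives (\ref{eq-main2}) with $\gam=2\gam'$.

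The principal obstacle is the uniform version of the arithmetic-Diophantine proposition. For each individual $y$ the Salem-style argument works, but to extract a single pair $(\delta,c)$ valid for all $y$ in a compact interval one must carefully track the dependence of the error in the linear recurrence, and the Galois-conjugate estimates, on $y$. This is precisely where the non-Pisot hypothesis $|\theta_2|>1$ must be used in a quantitatively uniform way, and it is the step that truly distinguishes this setting from the Pisot (discrete-spectrum) case of \cite{CSa}.
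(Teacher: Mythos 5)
Your proposal follows essentially the same route as the paper: the reduction via Proposition \ref{prop-Dioph0}(ii) and self-similarity, the key arithmetic-Diophantine statement (which is exactly the paper's Proposition \ref{prop-alg}, proved by the same Salem-style argument --- small fractional parts on a block force the nearest integers to satisfy the linear recurrence of the minimal polynomial, and Galois conjugation to $\theta_2$ with $|\theta_2|>1$ yields exponential growth contradicting boundedness), and the final application of Lemma \ref{lem-var2}. The only ingredient you leave implicit is the quantitative lower bound on the $\theta_2$-coefficient in the eigenvector expansion (the paper uses Garsia's separation lemma, giving $\gtrsim t^{-s}\theta^{-ks}$, which is what forces the geometric block structure and hence the $\log N$ count), but this sits inside the proof of the key proposition and the overall strategy is identical.
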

\begin{remark} The assumption  that $\theta$ admit  a Galois conjugate $\theta_2$ outside the unit circle is equivalent to saying that $\theta$
is not a PV or Salem number.
The definition of PV and Salem numbers is recalled in the Appendix (Section 8.1).
\end{remark}

\begin{remark}
If $\theta$ is a PV number, then the substitution suspension flow has
a dense point spectrum, see  \cite{SolTil,CSa}, so (\ref{eq-main2}) cannot hold. We do not know whether (\ref{eq-main2}) holds for Salem numbers $\theta$.
\end{remark}

Using symbolic coding of translation flows along stable foliations of pseudo-Anosov automorphisms by suspension flows over Vershik's automorphisms ( see Section 1.8.2 in \cite{Bufetov0} and references therein) we obtain the following
\begin{corollary} \label{cor-IET}
Suppose that $g$ is a pseudo-Anosov diffeomorphism on a surface $M$ such that the induced action
$g^*$ on the cohomology group $H^1(M, {\mathbb R})$ has dominant eigenvalue with at least one conjugate outside the unit circle.
Then the conclusion of Theorem \ref{th-main2} 
holds for translation flows along stable/unstable foliations of $g$.
\end{corollary}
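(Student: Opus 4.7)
The plan is to reduce Corollary~\ref{cor-IET} directly to Theorem~\ref{th-main2} via a symbolic coding of the pseudo-Anosov translation flow as a self-similar suspension flow over a substitution $\Z$-action, and then to verify that the hypothesis on the cohomological action of $g$ translates exactly into the Galois hypothesis on the Perron-Frobenius eigenvalue.

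First, I would invoke the construction of Vershik and Livshits, extended to the bi-infinite (two-sided) setting by Bufetov (cf.\ Section 1.8.2 of \cite{Bufetov0} and the discussion at the beginning of Section 5 above), to obtain a symbolic representation of the translation flow along the stable foliation of $g$. Since $g$ is pseudo-Anosov, a Markov partition adapted to its stable/unstable foliations yields a primitive aperiodic substitution $\zeta$ on a finite alphabet $\Ak$ with substitution matrix $\Sf_\zeta$, such that the translation flow along the stable foliation is measurably conjugate to the self-similar suspension flow $(\Xxi,h_t)$ over $(X_\zeta,T_\zeta)$, with piecewise constant roof function equal to the Perron-Frobenius eigenvector of $\Sf_\zeta^t$. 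The orbits of the flow correspond to leaves of the tail equivalence relation on the associated two-sided Markov compactum, and self-similar renormalization is realized by the Markov shift, which is conjugate to the pseudo-Anosov $g$ on the corresponding subshift of finite type.

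Second, I would identify spectral data. Under this coding, the substitution matrix $\Sf_\zeta$ (up to passing to a power, which is harmless) is conjugate, on the quotient by its constant invariants, to the action of $g^*$ on $H^1(M,\R)$; in particular, the Perron-Frobenius eigenvalue $\theta=\theta_1$ of $\Sf_\zeta$ coincides with the dominant eigenvalue of $g^*$, and the remaining eigenvalues of $\Sf_\zeta$ are (up to trivial factors) the remaining eigenvalues of $g^*$. Since $\theta$ is an algebraic integer and the characteristic polynomial of $g^*$ has integer coefficients, the other eigenvalues of $g^*$ include the full set of Galois conjugates of $\theta$. Thus the hypothesis that $g^*$ has at least one eigenvalue outside the unit circle other than $\theta$ itself is equivalent to the assumption that $\theta$ admits a Galois conjugate $\theta_2$ with $|\theta_2|>1$, which is precisely the hypothesis of Theorem~\ref{th-main2}.

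Third, with these identifications in place, Theorem~\ref{th-main2} applies verbatim to the self-similar suspension flow $(\Xxi,h_t)$: for every $a\in \Ak$ and every $\Bu>1$ the spectral measure $\sig_a$ satisfies the log-Hölder estimate \eqref{eq-main2}. Because the spectral measures $\sig_a$ (and, more generally, the measures $\sig_{\zeta^k[a]}$ entering the maximal spectral type \eqref{max_type}) generate the spectrum of the flow, measurable conjugacy of the flows transports this estimate from $(\Xxi,h_t)$ to the translation flow along the stable foliation of $g$; the argument for the unstable foliation is identical, using $g^{-1}$.

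The only genuinely delicate point is the justification of the coding step, i.e.\ the identification of spectral data between $\Sf_\zeta$ and $g^*|_{H^1(M,\R)}$. This is standard for pseudo-Anosov maps through Markov partitions and the stationary Bratteli-Vershik model, but it requires some care because the substitution matrix may a priori carry additional eigenvalues coming from boundary identifications in the Markov partition; these extra eigenvalues are roots of unity or zero and do not affect the Galois-conjugate hypothesis. Once this is in place, everything else is a direct quotation of Theorem~\ref{th-main2}.
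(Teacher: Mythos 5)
Your proposal is correct and follows essentially the same route as the paper, which derives the corollary in one line by citing the symbolic coding of translation flows along stable/unstable foliations of pseudo-Anosov maps as self-similar suspension flows over Vershik automorphisms (Section 1.8.2 of \cite{Bufetov0}) and then applying Theorem~\ref{th-main2}. Your expansion of the coding step and the observation that the Galois-conjugate hypothesis depends only on the dominant eigenvalue $\theta$ itself (so extra eigenvalues of the substitution matrix coming from the Markov partition are harmless) is consistent with what the paper leaves implicit.
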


The key ingredient of the proof of Theorem \ref{th-main2} is the following proposition, together with Proposition~\ref{prop-Dioph0} and Lemma~\ref{lem-var2}. Recall that the {\em height} of a polynomial is the maximum of absolute values of its coefficients.
 
\begin{prop} \label{prop-alg}
Suppose that $\theta=\theta_1>1$ is an algebraic integer which has at least one conjugate $\theta_2$ satisfying
$|\theta_2|>1$. Then there exist $C>0$ and $\alpha,\beta>0$ such that 
\be\label{eq-alg3}
\exp\Bigl( - \sum_{k=0}^{N-1} \|t\theta^k\|^2 \Bigr) \le \left\{ \begin{array}{rll} C \bigl(\log(1+t)\bigr)^{1/\log\beta} \cdot N^{-\alpha}, &\ \  N\ge 1, &\ \  \mbox{if}\ t\ge 1; \\
                                                                                                                              C N^{-\alpha}, &\ \  N \ge 2\lceil\frac{\log(1/t)}{\log \theta}\rceil, &\ \  \mbox{if}\ t \in (0,1). 
                                                                                                                              \end{array} \right.
\ee
In fact, we can take
$$
\alpha = \frac{(1+sH)^{-2}}{\log\beta}\,,\ \ \ \beta = 1+\lceil s\log\theta/\log|\theta_2|\rceil.
$$
where $H$ is the height of the minimal polynomial of $\theta$ and $s$ is its degree.
\end{prop}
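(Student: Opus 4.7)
The plan is an Erd\H{o}s--Kahane--style argument adapted to the arithmetic of the algebraic integer $\theta$, exploiting the Galois conjugate $\theta_2$ with $|\theta_2|>1$. The goal is a lower bound $\sum_{k=0}^{N-1}\|t\theta^k\|^2\ge \alpha\log N-(\log\beta)^{-1}\log\log(1+t)-O(1)$ for $t\ge 1$, which exponentiates to the stated estimate. The case $t\in(0,1)$ reduces to the case $t\ge 1$ since, when $N\ge 2\lceil\log(1/t)/\log\theta\rceil$, at least half of $k\in[0,N-1]$ satisfy $t\theta^k\ge 1$, and the tail sum suffices.

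\medskip

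Let $P(x) = x^s + a_{s-1}x^{s-1}+\cdots+a_0$ be the minimal polynomial of $\theta$ with $H=\max_j|a_j|$, and set $\rho = 1/(2(1+sH))$. Write $t\theta^k = K_k + \eps_k$ with $K_k\in\Z$ and $\|t\theta^k\| = |\eps_k|\le 1/2$. Multiplying $P(\theta)=0$ by $t\theta^k$ and separating integer from fractional parts gives
\[
K_{k+s} + \sum_{j=0}^{s-1} a_j K_{k+j} = -\Bigl(\eps_{k+s} + \sum_{j=0}^{s-1}a_j \eps_{k+j}\Bigr),
\]
whose right-hand side has modulus at most $(1+sH)\max_{0\le j\le s}|\eps_{k+j}|$. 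If this maximum is below $\rho$, the integer on the left vanishes and $(K_k)$ satisfies the recurrence of $P$ locally. Let $\Bk=\{k\in[0,N-1]:|\eps_k|\ge\rho\}$. If $|\Bk|\ge\alpha\log N$ we are done, since $\sum\|t\theta^k\|^2\ge|\Bk|\rho^2$; otherwise the complement splits into long \emph{good intervals} on which the recurrence propagates.

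\medskip

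On a good interval $[k_0,k_1]$ of length $M=k_1-k_0>s$, the closed form of the recurrence is $K_{k_0+j}=\sum_{i=1}^s\sigma_i(\eta)\theta_i^{\,j}$ for a unique $\eta\in\Q(\theta)$, where $\sigma_i$ is the embedding $\theta\mapsto\theta_i$. Substituting $K_{k_0+l}=t\theta^{k_0+l}-\eps_{k_0+l}$ into the Vandermonde inversion, and using that $(\theta^l)_{l=0}^{s-1}$ is the first column of $\Theta$, yields $\sigma_1(\eta)=t\theta^{k_0}+O(\rho)$ and $\sigma_i(\eta)=O(\rho)$ for $i\ne 1$, with implicit constants controlled by $\|\Theta^{-1}\|_\infty$. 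Repeating the computation on the right window $[k_1-s+1,k_1]$ yields $|\sigma_2(\eta)|\cdot|\theta_2|^{M-s+1}\le C_\Theta\rho$. For $t\ge 1$ the identity $K_0\ne 0$ forces $\eta\ne 0$ on the first good interval, and an analogous argument works on each subsequent one, so $\sigma_2(\eta)\ne 0$. Since $\eta$ lies in the inverse different $(P'(\theta))^{-1}\Z[\theta]$, the element $P'(\theta)\eta$ is a nonzero algebraic integer with $|N_{\Q(\theta)/\Q}(P'(\theta)\eta)|\ge 1$. Combined with the conjugate upper bounds, this yields $|\sigma_2(\eta)|\ge c/(t\theta^{k_0})$. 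Comparing the upper and lower bounds on $|\sigma_2(\eta)|$ gives
\[
|\theta_2|^{M-s+1}\le C'\cdot t\theta^{k_0}, \qquad\text{so}\qquad M\le\frac{\log\theta}{\log|\theta_2|}\cdot k_0+\frac{\log t}{\log|\theta_2|}+O(1).
\]

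\medskip

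To finish, iterate across consecutive good intervals separated by bad indices. A careful accounting of the $s$-fold neighborhoods of $\Bk$ (which is where the factor $s$ in $\beta=1+\lceil s\log\theta/\log|\theta_2|\rceil$ enters) shows that successive right endpoints grow geometrically, $k_1^{(j+1)}\le\beta\cdot k_1^{(j)}+O(\log t)$. Starting from the origin, after $|\Bk|$ bad indices we can cover $[0,N-1]$ only if $N\le C\log(1+t)\cdot\beta^{|\Bk|}$, whence $|\Bk|\ge(\log\beta)^{-1}(\log N-\log\log(1+t))-O(1)$. Multiplying by $\rho^2=(2(1+sH))^{-2}$ yields the desired lower bound for the sum. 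The hardest step is the algebraic lower bound $|\sigma_2(\eta)|\gtrsim(t\theta^{k_0})^{-1}$, which depends jointly on $\eta$ having bounded denominator (controlled by the inverse different of $\Z[\theta]$) and on the other conjugates $\sigma_i(\eta)$ being $O(\rho)$ rather than merely polynomial in $t\theta^{k_0}$; the geometric amplification in the iteration is precisely what converts a naive $\log t$ loss into the sharper $\log\log(1+t)$ correction in the stated bound.
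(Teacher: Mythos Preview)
Your approach is essentially the same as the paper's: write $t\theta^k=K_k+\eps_k$, observe that the integer recurrence $\sum_j a_jK_{k+j}=0$ holds whenever $\max_j|\eps_{k+j}|<(1+sH)^{-1}$, bound the length of any run on which this recurrence propagates by obtaining a lower bound on the $\theta_2$-component, and iterate geometrically to count ``bad'' indices. The paper works with the eigen-decomposition of $\vec\eps_k$ rather than the closed form for $K_{k_0+j}$, but since the $\theta_2$-coefficient of $\vec\eps_k$ equals minus that of $\vec p_k$, the two viewpoints are equivalent.

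The one substantive difference is in the algebraic lower bound. The paper invokes Garsia's lemma on the integer polynomial $Q$ with $Q(\theta_2)=\langle\vec p_k,\overrightarrow{e^*_{\theta_2}}\rangle$, obtaining $|Q(\theta_2)|\gtrsim({\rm Height}(Q))^{-s}\gtrsim(t\theta^k)^{-s}$. Your norm argument via the inverse different, using $|N_{\Q(\theta)/\Q}(P'(\theta)\eta)|\ge 1$ together with the upper bounds $|\sigma_i(\eta)|=O(\rho)$ for $i\ge 3$ and $|\sigma_1(\eta)|\asymp t\theta^{k_0}$, gives the sharper $|\sigma_2(\eta)|\gtrsim(t\theta^{k_0})^{-1}$. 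This is genuinely better: it yields $M\le(\log\theta/\log|\theta_2|)k_0+O(\log t)$ rather than the paper's $M\le(s\log\theta/\log|\theta_2|)k+O(\log t)$, and hence a smaller admissible $\beta'=1+\lceil\log\theta/\log|\theta_2|\rceil$. Your explanation that the factor $s$ in the stated $\beta$ arises from ``$s$-fold neighborhoods of $\Bk$'' is therefore not right---that effect is merely additive. In the paper the $s$ comes precisely from the exponent $-s$ in Garsia's bound; with your norm argument one simply proves a stronger statement than claimed. (A minor point: your $\rho=\tfrac12(1+sH)^{-1}$ differs from the paper's $\delta_1=(1+sH)^{-1}$, which accounts for a harmless factor of $4$ in $\alpha$.)
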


Let us derive Theorem \ref{th-main2} first. Passing to $\zeta^\ell$ if necessary, we can assume that a return word $v$ as in Proposition~\ref{prop-Dioph0} exists and we fix this word for the duration of the proof. Then we are going to apply (\ref{eq-matrest10}) with $\om \in [B^{-1},B]$, keeping in mind that  $|\zeta^k(v)|_{\vec{s}}=\theta^k|v|_{\vec{s}}$ in the self-similar case. We obtain for $(x,t)\in \Xxi$, $\om\in\R$, $R>1$,
and $a\in \Ak$:
%\begin{eqnarray*}
$$
|S^{(x,t)}_R(\One_{\Xx_a},\om)|  \le  C' R\exp\Bigl(-c_1\sum_{k=0}^{\lfloor \log_\theta R- C_2\rfloor} \bigl\|\om |v|_{\vec{s}}\cdot\theta^k\bigr\|^2\Bigr) 
                                                          \le  C'_B R (\log_\theta R)^{-c_1\alpha},
$$
%\end{eqnarray*}
using Proposition \ref{prop-alg} in the last inequality. Then Lemma \ref{lem-var2} applies with $\Om(r) = (\log_\theta(1/r))^{-2c_1\alpha}$ since (\ref{eq-SR}) holds with $Y = \Xxi$ and $f=\One_{\Xx_a}$ for
$R_0=R_0(B)$. This yields the
desired estimate for the spectral measure. Now Theorem \ref{th-main2} is proved completely, modulo Proposition \ref{prop-alg}.

\begin{sloppypar}
Interestingly, there is a close relationship between estimates of  expressions like
$\exp( - \sum_{k=0}^{N-1} \|t\theta^k\|^2 )$ and decay estimates of the Fourier transforms of Bernoulli convolutions and other self-similar measures with uniform contraction rates. We discuss Bernoulli convolutions and this connection in the Appendix.
\end{sloppypar}

\begin{proof}[Proof of Proposition~\ref{prop-alg}] The argument uses techniques common in the work of Pisot and Salem and the theory of numbers named after them; it also shares some common features with the proof
of Proposition~\ref{prop-EKvar}.
We will use symbols $\gtrsim$ and $\lesssim$ to indicate inequalities up to a multiplicative constant, depending only on $\theta$. Let $s$ be the degree of $\theta$ and let
$$
q(x) = x^s - b_1 x^{s-1} - \ldots - b_s,\ \ b_j\in \Z,
$$
be the minimal polynomial for $\theta$. First assume that $t\ge 1$  and write
\be \label{eq-diop1}
t\theta^k = p_k + \eps_k,\ \ p_k \in \N, \ \eps_k \in [-1/2,1/2),\ \ \mbox{for}\ k \ge 1.
\ee
Thus $\|t\theta^k\|=|\eps_k|$; note that $p_1 \ge 1$ by the assumption $t\ge 1$. The numbers $p_k$ and $\eps_k$ depend on $t$; we suppress this dependence in the notation, but should keep it in mind. Since the sequence $\{t\theta^k\}_{k\ge 1}$ satisfies the recurrence relation with characteristic polynomial $q(x)$ , we have
$$
p_{k+s} + \eps_{k+s} = b_1 (p_{k+s-1} + \eps_{k+s-1}) + \cdots + b_s (p_k + \eps_k)\ \ \mbox{for}\ k\ge 1,
$$
hence
\be \label{chad1}
p_{k+s} - b_1 p_{k+s-1} - \ldots - b_s p_k = -(\eps_{k+s} - b_1 \eps_{k+s-1} - \ldots - b_s \eps_k).
\ee 
Let $H= \max_{j\le s} |b_j|$ denote the height of the polynomial $q$. 
Since the left-hand side of (\ref{chad1}) is an integer, if
\be \label{chad2}
\max\{|\eps_k|, |\eps_{k+1}|,\ldots, |\eps_{k+s}|\} < (1+sH)^{-1}=: \delta_1,
\ee
then the expression in (\ref{chad1}) is less that one in absolute value, hence zero, and then
$$
\vec{\eps}_{k+1}:=\left( \begin{array}{c} \eps_{k+1} \\ \eps_{k+2} \\ \vdots \\ \eps_{k+s-1} \\ \eps_{k+s} \end{array} \right) = \left( \begin{array}{ccccc}
                     0 & 1 & 0 & \cdots & 0 \\ 0 & 0 & 1 & \cdots & 0  \\ \vdots & \vdots & \vdots & \ddots & \vdots \\ 0 & 0 & 0 & \cdots & 1 \\ b_s & b_{s-1} & b_{s-2} & \cdots & b_1 \end{array} \right)
                     \left( \begin{array}{c} \eps_{k} \\ \eps_{k+1} \\ \vdots \\ \eps_{k+s-2} \\ \eps_{k+s-1} \end{array} \right)=: A \vec{\eps}_k.
$$                   
Note that $A$ is the companion matrix of the polynomial $q(x)$, so its eigenvalues are precisely $\theta$ and its conjugates, and they are simple.
We see that 
\be \label{chad3}
\max\{\|\vec{\eps}_k\|_\infty, \|\vec{\eps}_{k+1}\|_\infty,\ldots,\|\vec{\eps}_{k+n}\|_\infty\} < \delta_1\ \Longrightarrow\ \vec{\eps}_{k+j} = A^j \vec{\eps}_k\ \ \mbox{for}\ j=1,\ldots,n.
\ee
Now we express $\vec{\eps}_k$ as a linear combination of eigenvectors for $A$. Our goal is to show that the coefficient corresponding to $\theta_2$ is not too small in absolute value, so we can estimate
the norms of $\vec{\eps}_{k+j}$ from below, and for $n\sim k$ the implication (\ref{chad3}) will lead to a contradiction.

Let $\theta=\theta_1,\theta_2,\ldots,\theta_s$ be the (real and complex) zeros of $q(x)$. 
Denote by $\{\vec{e}_{\theta_j}\}_{j\le s}$, respectively $\bigl\{\overrightarrow{e^*_{\theta_j}}\bigr\}_{j\le s}$,  the eigenvectors of $A$ and  $A^t$, which can be explicitly written as follows: 
\be \label{eq-diop2}
\vec{e}_{\theta_j} = \left( \begin{array}{c} 1 \\ \theta_j \\ \vdots \\ \theta_j^{s-1}\end{array} \right),\ \ \ \ \overrightarrow{e^*_{\theta_j}} = 
\left( \begin{array}{c} b_s \theta_j^{s-2} \\ b_{s-1} \theta_j^{s-2} + b_s \theta_j^{s-3} \\ \vdots \\ b_2 \theta_j^{s-2} + \cdots + b_{s-1} \theta_j + b_s \\ \theta_j^{s-1}
\end{array} \right).
\ee
The coefficient corresponding to $\theta_2$ in the eigenvector expansion for $\vec{\eps}_k$ equals
$$
a_2 = \frac{\langle\vec{\eps}_k, \overrightarrow{e^*_{\theta_2}}\rangle}{\langle\vec{e}_{\theta_2},\overrightarrow{e^*_{\theta_2}}\rangle}\,.
$$
The denominator in the formula for $a_2$ depends only on $\theta$. As for the numerator, it follows from (\ref{eq-diop1}) that 
$$
\vec{\eps}_k = t\theta^k \vec{e}_{\theta_1} - \vec{p}_k, \ \ \ \mbox{where}\ \ \vec{p}_k = [p_k, p_{k+1},\ldots, p_{k+s-1}]^t.
$$
Thus,
$$
\langle\vec{\eps}_k,\overrightarrow{e^*_{\theta_2}}\rangle= - \langle\vec{p}_k, \overrightarrow{e^*_{\theta_2}}\rangle.
$$
It follows from (\ref{eq-diop2}) that $\langle\vec{p}_k, \overrightarrow{e^*_{\theta_2}}\rangle=Q(\theta_2)$, where $Q$ is a nontrivial polynomial of degree $s-1$, with integer coefficients, of height at most $p_{k+s-1} s H \lesssim t\theta^k$. Since $Q(\theta_2)\ne 0$, applying the classical estimate of Garsia \cite[Lemma 1.51]{Garsia} we obtain
$$
|Q(\theta_2)| \ge \frac{\prod_{|\theta_j|\ne 1, j\ne 2} \bigl||\theta_j|-1\bigr|}{s^{s-2} \bigl(\prod_{|\theta_j|>1,j\ne 2} |\theta_j|\bigr)^s {\rm Height}(Q)^s} \gtrsim t^{-s}\theta^{-ks}.
$$
We have proved that $|a_2|\gtrsim t^{-s}\theta^{-ks}$. Let $|||\cdot|||$ be a norm in $\R^s$ adapted to the eigenvector expansion for $A$ (e.g.\ the sup norm of the vector of coordinates with respect to the basis of eigenvectors given by (\ref{eq-diop2})); then $|||\vec{\eps}_k|||\ge |a_2|$, hence (\ref{chad3}) yields the implication
$$
\max\{\|\vec{\eps}_k\|_\infty, \|\vec{\eps}_{k+1}\|_\infty,\ldots,\|\vec{\eps}_{k+n}\|_\infty\} <\delta_1\ \Longrightarrow\ |||\vec{\eps}_{k+n}|||\ge |\theta_2|^n |a_2|\gtrsim |\theta_2|^n t^{-s} \theta^{-ks}.
$$
Since all norms in $\R^s$ are equivalent, this will be a contradiction with $\|\vec{\eps}_{k+n}\|_\infty \le 1/2$ for $n\ge k\frac{s\log\theta}{\log |\theta_2|}+ \frac{s\log t}{\log|\theta_2|}+K$,  where $K\in \N$ depends only on $\theta$.
Let 
$$
\beta:= \lceil s\log\theta/\log|\theta_2| \rceil + 1.
$$
Then 
$$
k\beta-1\ge k\frac{s\log\theta}{\log |\theta_2|}+\frac{s\log t}{\log|\theta_2|}+K\ \ \mbox{ for }\ \ k\ge k_0:=\lceil s\log t/\log|\theta_2|\rceil+ K+1,
$$
 hence
$$
\forall\,k\ge k_0,\ \ \ \ \max\{\|\vec{\eps}_k\|_\infty, \|\vec{\eps}_{k+1}\|_\infty,\ldots,\|\vec{\eps}_{k\beta-1}\|_\infty\} \ge \delta_1.
$$
This implies that
$$
\exp\Bigl( - \sum_{k=k_0}^{k_0 \beta^n} \|t\theta^k\|^2 \Bigr) \le \exp (-n\delta_1^2),
$$
therefore,
$$
\exp\Bigl( - \sum_{k=0}^N \|t\theta^k\|^2 \Bigr) \lesssim k_0^{1/\log \beta} \exp\bigl(-\frac{\log N}{\log\beta}\cdot \delta_1^2\Bigr) \lesssim  \bigl(\log(1+t)\bigr)^{1/\log\beta} \cdot N^{-\delta_1^2/\log\beta},
$$
which completes the proof of (\ref{eq-alg3}) for $t\ge 1$. Finally, for $t\in (0,1)$ let $j= \lceil \log(t^{-1})/\log\theta\rceil=\min\{j\ge 1:\ t\theta^j\ge 1\}$. Then taking $\tau=t\theta^j$ we
have from the case already proven, for $N \ge 2j$:
$$
\exp\Bigl( - \sum_{k=0}^N \|t\theta^k\|^2 \Bigr) \le \exp\Bigl( - \sum_{k=0}^{N-j} \|\tau\theta^k\|^2 \Bigr)  \lesssim (N-j)^{-\alpha} 
\lesssim (N/2)^{-\alpha},
$$
which yields the desired estimate.
\end{proof}

%%%%%%%%%%%%%%%%%%%%%%%%%%%%%%%%%%%%%%%%%%%%%%%%%%%%%%%%%%%%%%%%

\section{Spectral measure at zero for self-similar substitution flows}

Here we continue the study of the spectrum for the self-similar substitution suspension flow $(\Xxi,h_t,\wtil{\mu})$ over the substitution $\Z$-action $(X_\zeta,T_\zeta,\mu)$, as in Section 5, but now we focus on 
the behavior of spectral measures $\sig_f$ in the neighborhood of zero. 
Our considerations should be compared to the similar issues for the substitution $\Z$-action itself, discussed in Section 3.1.

Lemma~\ref{lem-var2} and related results in the literature indicate that the behavior of $\sig_f(B(r,0))$ is controlled by the growth of the ergodic integrals
$\int_0^t f\circ h_\tau(x)\,d\tau$. (Of course, we need to take $f$ orthogonal to constants, in order to avoid the trivial zero eigenvalue.) Using available results about asymptotic behavior of such ergodic integrals, we  obtain much more precise results near zero than at other points.

Let $\zeta$ be a primitive aperiodic substitution on the alphabet $\Ak = \{1,\ldots,m\}$, with substitution matrix $\Sf$. 
In this section we assume, in addition, that $\Sf$ has a positive real second eigenvalue greater than one, which dominates the remaining eigenvalues:
\be \label{eigen1}
\theta=\theta_1> \theta_2 > |\theta_3|\ge \cdots,\ \ \ \theta_2>1.
\ee

In order to state our theorem, we need to recall 
results about the asymptotic behavior of ergodic integrals
\be \label{erg-int}
S(f,x,t) = \int_0^t f\circ h_\tau(x)\,d\tau
\ee
for an appropriate class of test functions. In what follows, we use the notation and terminology from \cite{BuSol}, specialized to $d=1$.
Recall that $\Xxi = \{x=(y,u):\ y\in X_\zeta,\ 0 \le u \le s_{y_0}\}$, where $\vec{s} = [s_1,\ldots,s_m]^t$ is the  Perron-Frobenius eigenvector for $\Sf^t$.
We say that a function $f$ on $\Xxi$ is {\em cylindrical} if it is integrable with respect to  $\wtil{\mu}$ and depends only on the ``tile'' containing the origin. More precisely,
there exist functions $\psi_j \in L^1([0,s_j])$ for $j\le m$, such that 
\be \label{eq-cyl}
f(y,t) = \psi_{y_0}(t),\ \ 0 \le t \le s_{y_0}.
\ee

In the self-similar case we  have a ``geometric'' substitution action
$$
Z:\, \Xxi \to \Xxi,
$$
which is hyperbolic (in the sense of Smale spaces) and satisfies the relation
$$
Z\circ h_t = h_{\theta t} \circ Z,
$$
see \cite[2.5]{BuSol}.
Following \cite{Bufetov0,Bufetov1,BuSol}, let $(\Phi_{2,x}^+)_{x\in \Xxi}$  be the H\"older cocycle (or finitely-additive measure) defined on the algebra generated by line segments in $\R$.

%(which may be identified with the unstable set of $x$ with respect to $Z$), corresponding to the second eigenvalue $\theta_2$. 
For the reader's convenience, we recall the definition of the objects $(\Phi_{2,x}^+)_{x\in \Xxi}$ here, since \cite{Bufetov0,Bufetov1} are concerned
with general translation flows, whereas \cite{BuSol} deals with the more general case of self-similar tilings in $\R^d$, $d\ge 1$. 

As already mentioned at the beginning of Section 4, the elements of $\Xxi$ may be  viewed as tilings of the line $\R$, with tiles of type $j\le m$ being line segments of length $s_j$. The  $\R$-action $h_t$ is then just the translation action $h_t:\,x\mapsto x-t$.
The geometric substitution action $Z$ is the composition of scaling $x\to \theta x$ and subdivision, according to the substitution rule. If we just do
the subdivision, we obtain a sequence of tilings $x^{(-k)}$ for $k\ge 0$, with $x^{(0)} = x$, such that the partition of $\R$ induced by $x^{(-k)}$ refines the
one induced by $x^{(-k+1)}$, for $k\ge 1$. Explicitly, $x^{(-k)} = \theta^{-k} Z^k(x)$. Let $ \vec{e_2^*}=\bigl((\vec{e_2^*})_j\bigr)_{j\le m}$ be an eigenvector of the transpose 
substitution matrix $\Sf^t$ corresponding to $\theta_2$. For an interval tile $I_j-y$ of $x^{(-k)}$ labeled by $j\le m$ we let 
$$
\Phi_{2,x}^+(I_j-y) := \theta_2^{-k} (\vec{e_2^*})_j,
$$
which is then extended by additivity to finite unions and to arbitrary intervals, as a limit over unions of $x^{(-k)}$ tiles approximating them, as $k\to
\infty$. It is
proved in \cite{BuSol} that this is well-defined (under the assumption $|\theta_2|>1$), and we obtain a family of finitely-additive measures $\Phi_{2,x}^+$, $x\in \Xxi$, on the algebra $\FrA$ generated by line segments.
 It has the property
$$\Phi_{2,x}^+(E+t) = \Phi_{2,x-t}(E),\ \ \ x\in \Xxi,\ E\in \FrA.$$
We will also write
$$
\Phi_{2,x}^+(t):= \Phi^+_{2,x}([0,t]).
$$
This function is a cocycle over the $\R$-action $h_t$, since
\begin{eqnarray*}
\Phi_{2,x}^+(s+t) & = & \Phi^+_{2,x}([0,s]) + \Phi^+_{2,x}([s,t]) \\
                            & = & \Phi^+_{2,x}([0,s]) + \Phi^+_{2,x-s}([0,t]) \\
                            & = & \Phi_{2,x}(s) + \Phi^+_{2,h_s(x)}(t).
\end{eqnarray*}
In the next lemma we collect the properties of the cocycle $\Phi_{2,x}^+$.

\begin{lemma} \label{lem-cocycle} Under the standing assumptions ($\theta_2>1$), the cocycle $\Phi_{2,x}^+$ satisfies:

{\bf (i)} $\Phi_{2,Z(x)}^+(\theta t) = \theta_2 \Phi_{2,x}^+(t)$ for $x\in \Xxi$ and $t\in \R$;

{\bf (ii)} $|\Phi_{2,x}^+(t)| \le C_1 \max(1,|t|^\alpha)$, where $\alpha = \log_\theta(\theta_2)$, for some $C_1>0$;

\noindent and is non-degenerate in the following sense:

{\bf (iii)} for a given $x\in \Xxi$, the function $t\mapsto \Phi^+_{2,x}(t)$ is not a.e.\ zero;

{\bf (iv)} for a given $t\in \R_+$, the function $x\mapsto \Phi^+_{2,x}(t)$ is not $\wtil{\mu}$-a.e.\ constant.
\end{lemma}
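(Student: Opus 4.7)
The plan is to treat (i)--(iv) in order, with (i) immediate from the definition, (ii) an iteration of (i) from a uniform base case, (iii) an easy consequence of primitivity, and (iv)---the main obstacle---a contradiction built from the cocycle relation, the sublinear growth of (ii), and the scaling of (i). For (i), I would use the refinement identity $x^{(-k)} = \theta^{-k} Z^k(x)$, which gives $Z(x)^{(-k)} = \theta\cdot x^{(-(k+1))}$, so each tile $I_j-y$ of $Z(x)^{(-k)}$ is the $\theta$-dilate of a tile of $x^{(-(k+1))}$ carrying the same label $j$. The prescribed values are $\theta_2^{-k}(\vec{e_2^*})_j$ and $\theta_2^{-(k+1)}(\vec{e_2^*})_j$ respectively, in ratio $\theta_2$, so the scaling identity holds on every tile interval; it extends to $[0,\theta t]$ by finite additivity and the approximation procedure from \cite{BuSol}.

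For (ii), I would first establish $|\Phi^+_{2,x}(t)|\le C_0$ uniformly for all $x\in \Xxi$ and $|t|\in[0,\theta]$. A greedy tile decomposition of $[0,t]$ through successive refinement levels uses at most $O(1)$ complete tiles at each level $k$, each contributing $\theta_2^{-k}\cdot O(1)$ to $\Phi^+_{2,x}$; the resulting geometric series converges because $\theta_2>1$. For general $|t|\ge 1$, I would write $|t| = \theta^k t'$ with $t'\in[1,\theta]$ and apply (i) iteratively to obtain $|\Phi^+_{2,x}(t)| = \theta_2^k|\Phi^+_{2,Z^{-k}(x)}(t')| \le C_0\theta_2^k \le C_0|t|^\alpha$, since $\theta_2 = \theta^\alpha$.

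For (iii), I would fix $x$ and use primitivity of $\zeta$, which forces every letter to appear in $x^{(-k)}$ once $k$ is large; since $\vec{e_2^*}$ is a nonzero eigenvector, some coordinate $(\vec{e_2^*})_{j_0}$ is nonzero, and a tile $I_{j_0}-y$ in $x^{(-k)}$ with this label satisfies $\Phi^+_{2,x}(I_{j_0}-y) = \theta_2^{-k}(\vec{e_2^*})_{j_0} \ne 0$. Hence $t\mapsto \Phi^+_{2,x}(t)$ takes distinct values at the two endpoints of this tile, and since (ii) makes this function H\"older continuous in $t$, it is nonzero on a set of positive Lebesgue measure.

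Part (iv) is the main obstacle. Suppose, for contradiction, that for some fixed $t_0>0$, $\Phi^+_{2,x}(t_0) = c$ for $\wtil\mu$-a.e.\ $x$. The $h_s$-invariance of $\wtil\mu$ together with Fubini implies, for $\wtil\mu$-a.e.\ $x$, that $\Phi^+_{2,h_s(x)}(t_0) = c$ for Lebesgue-a.e.\ $s$; the cocycle identity $\Phi^+_{2,x}(s+t_0) = \Phi^+_{2,x}(s)+\Phi^+_{2,h_s(x)}(t_0)$ combined with continuity from (ii) then upgrades this to $\Phi^+_{2,x}(s+t_0) - \Phi^+_{2,x}(s) = c$ for every $s\ge 0$. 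Iterating yields $\Phi^+_{2,x}(nt_0) = nc$, and the sublinear bound in (ii) with $\alpha<1$ forces $c=0$. Therefore $\Phi^+_{2,x}(\cdot)$ is $t_0$-periodic on $\R$, and (ii) bounds its supremum on $[0,t_0]$ by $C_1 t_0^\alpha$, giving a uniform bound in $s$ on a full-measure set $B$. Since $\wtil\mu$ is $Z$-invariant (by uniqueness of the $h_t$-invariant probability measure and the intertwining $Z\circ h_t = h_{\theta t}\circ Z$), the set $B' := \bigcap_{k\ge 0} Z^{-k}(B)$ has full $\wtil\mu$-measure. Picking $x\in B'$ together with $s^*>0$ for which $\Phi^+_{2,x}(s^*)\ne 0$ (possible by (iii)), the scaling (i) iterated $k$-times gives
$$\theta_2^k|\Phi^+_{2,x}(s^*)| = \bigl|\Phi^+_{2,Z^k(x)}(\theta^k s^*)\bigr| \le C_1 t_0^\alpha,$$
which is impossible as $k\to\infty$.
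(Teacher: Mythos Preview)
Your proof is correct and, unlike the paper's, self-contained: the paper simply cites \cite[Lemma~3.2, Lemma~3.3]{BuSol} for (i)--(ii) and \cite[Prop.~2.26, 2.28]{Bufetov0} (alternatively \cite[Lemma~3.5, \S6.2]{BuSol}) for (iii)--(iv), without reproducing any argument. Your direct treatment---especially the contradiction argument for (iv), which combines the cocycle identity, the sublinear growth from (ii), $Z$-invariance of $\wtil\mu$, and the scaling relation (i) to force a blow-up along the $Z$-orbit---is a clean and elementary route that bypasses the more general machinery of those references.

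One small imprecision: in both (iii) and (iv) you invoke ``(ii)'' to get H\"older continuity of $t\mapsto \Phi^+_{2,x}(t)$, but (ii) as stated only gives $|\Phi^+_{2,x}(t)|\le C_1$ for $|t|\le 1$, not $|\Phi^+_{2,x}(t)|\le C|t|^\alpha$. You genuinely need the latter (together with $\Phi^+_{2,x}(t)-\Phi^+_{2,x}(s)=\Phi^+_{2,h_s(x)}(t-s)$) for the continuity upgrade in (iv) and the positive-measure conclusion in (iii). It follows from the same scaling you used for large $|t|$, applied in the other direction: for $|t|\in[\theta^{-k},\theta^{-k+1}]$ one has $|\Phi^+_{2,x}(t)|=\theta_2^{-k}|\Phi^+_{2,Z^k(x)}(\theta^k t)|\le C_0\,\theta_2^{-k}\le C_0'|t|^\alpha$. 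With this one-line addition your argument is complete.
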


\begin{proof}
Claims (i) and (ii) are special cases of \cite[Lemma 3.2]{BuSol} and \cite[Lemma 3.3]{BuSol} respectively. Non-degeneracy (iii) and (iv) is proved in the
more general setting of translation flows in \cite[Prop.\,2.26]{Bufetov0} and \cite[Prop.\,2.28]{Bufetov0} respectively. Alternatively, (iii) follows from
the proof of \cite[Lemma 3.5]{BuSol} and (iv) follows from \cite[Section 6.2]{BuSol}.
\end{proof}

Further, let $\Phi_2^{-}$ be the finitely-additive measure on the transversal corresponding to $\theta_2$, which induces a finitely-additive invariant measure $m_{\Phi_2^{-}}$ for the flow $h_t$. There is an explicit formula for $m_{\Phi_2^-}(f)$ for a cylindrical function $f$ in \cite[(39)]{BuSol}:
\be \label{int-cyl}
m_{\Phi_2^{-}}(f) = \sum_{j=1}^m (\vec{e}_2)_j \int_0^{s_j} \psi_j(t)\,dt,
\ee
where $\vec{e}_2$ is the eigenvector of $\Sf$ corresponding to $\theta_2$ %(with a normalization compatible with that used for $\Phi_{2,x}^+$) 
and $f$ is defined by (\ref{eq-cyl}).

For a cylindrical $f\in L^2(\Xxi,\wtil{\mu})$ with $\int\! f\,d\wtil{\mu}=0$, 
an asymptotic formula  $S(f,x,t)$ is a very special case of results in \cite{Bufetov0}.
It is also a particular case (for $d=1$) of \cite[Theorem 4.3]{BuSol}.  It asserts that there exist $C_1>0$ and $\eps>0$ such that for any cylindrical
function $f$ with zero mean and any $x\in \Xxi$,
\be \label{erg-int2}
S(f,x,t) = \Phi_{2,x}^+(t)\cdot m_{\Phi_2^-}(f) + {\rm \Error}(t),
\ee
where 
\be \label{error}
|{\rm \Error}(t)|\le C_1\max(1,|t|^{\alpha-\eps}),\ \ \ \mbox{with}\ \ \alpha = \log_\theta(\theta_2).
\ee
This should be compared with Dumont-Thomas Theorem~\ref{th-DT} for substitution $\Z$-actions; the corresponding cocycles were further studied in \cite{DKT}.
Now we can state the main result of this section.

\begin{theorem} \label{th-zero}
Let $\zeta$ be a primitive aperiodic substitution on the alphabet $\Ak = \{1,\ldots,m\}$, with substitution matrix $\Sf$ satisfying (\ref{eigen1}). Let $f$ be a cylindrical function with zero mean
$\int f\,d\wtil{\mu}=0$ and $m_{\Phi_2^{-}}(f)\ne 0$. Let $\sig_f$ be the corresponding spectral measure on the line. Then there exists
 a non-trivial positive $\sig$-finite Borel measure $\eta$  on $\R$, such that
\be \label{eq-zero2}
\lim_{N\to \infty}  \frac{\sig_f([-c\theta^{-N}, c\theta^{-N}])}{\theta^{-N(2-2\alpha)}} = \eta([-c,c])\ \ \ \mbox{for all}\ \ c>0\ \mbox{such that}\ \eta(\{c\})=0,
\ee
where
$
%\alpha = \frac{\log\theta_2}{\log\theta} \in (0,1).
\alpha = \log_\theta(\theta_2) \in (0,1).
$
\end{theorem}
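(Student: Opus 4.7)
The proof rests on the self-similarity $Z\colon \Xxi\to\Xxi$, which preserves $\wtil\mu$ and satisfies $Z\circ h_t=h_{\theta t}\circ Z$. Setting $f_N:=f\circ Z^N$, an immediate computation of $\hat\sig_{f_N}(t)=\langle f_N\circ h_t,f_N\rangle$ using $Z$-invariance of $\wtil\mu$ yields $\hat\sig_{f_N}(t)=\hat\sig_f(\theta^N t)$, so
$$\sig_f([-c\theta^{-N},c\theta^{-N}])=\sig_{f_N}([-c,c]).$$
Analogously, starting from (\ref{erg-int2}) and using $\Phi_{2,Z(x)}^+(\theta t)=\theta_2\Phi_{2,x}^+(t)$ from Lemma~\ref{lem-cocycle}(i) together with $S(f_N,x,t)=\theta^{-N}S(f,Z^N x,\theta^N t)$, one obtains $m_{\Phi_2^-}(f_N)=\theta^{N(\alpha-1)}m_{\Phi_2^-}(f)$. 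Therefore $\theta^{N(2-2\alpha)}=m_{\Phi_2^-}(f)^2/m_{\Phi_2^-}(f_N)^2$, and the task reduces to showing that the renormalized measures $\nu_N:=m_{\Phi_2^-}(f_N)^{-2}\sig_{f_N}$ converge, on continuity sets, to a non-trivial positive $\sigma$-finite Borel measure $\nu$; the theorem will then follow with $\eta=m_{\Phi_2^-}(f)^2\nu$.

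\textbf{Identifying the limit via the cocycle.} The same rescaling yields the quantitative asymptotic
$$\frac{S(f_N,y,t)}{m_{\Phi_2^-}(f_N)}=\Phi_{2,y}^+(t)+O\!\bigl(\theta^{-N\eps}(1+t^{\alpha-\eps})\bigr)$$
uniformly in $y\in\Xxi$. Combining this with the Plancherel-type identity
$$\int|\hat\phi(\om)|^2\,d\sig_{f_N}(\om)=\int_\Xxi\Bigl|\int\phi(t)f_N(h_t y)\,dt\Bigr|^2 d\wtil\mu(y),$$
valid for compactly supported $\phi\in L^1\cap L^2$, and integrating by parts $\int\phi\,f_N\!\circ\! h_t\,dt=-\int\phi'(t)S(f_N,y,t)\,dt$ for smooth $\phi$ supported away from the origin, division by $m_{\Phi_2^-}(f_N)^2$ and dominated convergence give
$$\lim_{N\to\infty}\int|\hat\phi|^2\,d\nu_N=\int_\Xxi\Bigl|\int\phi(t)\,d\Phi_{2,y}^+(t)\Bigr|^2 d\wtil\mu(y).$$
The right-hand side is translation invariant in $\phi$ by $h_\tau$-invariance of $\wtil\mu$ and the cocycle property, hence depends only on $|\hat\phi|^2$; a Bochner-type argument then produces a positive $\sigma$-finite Borel measure $\nu$ on $\R$ making the limit equal to $\int|\hat\phi|^2\,d\nu$. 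The scaling $\nu(\theta E)=\theta^{2-2\alpha}\nu(E)$ is inherited from Lemma~\ref{lem-cocycle}(i), and non-triviality of $\nu$ follows from parts (iii)--(iv) of the same lemma, which force the right-hand integral to be strictly positive for some $\phi$.

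\textbf{From vague to interval convergence.} The a~priori bound $\sig_f([-r,r])=O(r^{2-2\alpha})$, obtained from Lemma~\ref{lem-var2} applied to the growth $G_T(f,0)\asymp T^{2\alpha-1}$ implicit in Step~2, gives uniform tightness $\nu_N([-R,R])\le C_R$ on bounded intervals. Since products $\hat\phi_1\overline{\hat\phi_2}$ with $\phi_i$ smooth and compactly supported span, by polarization and density, a class sufficient to test vague convergence of Radon measures on $\R$, the convergence of $\int|\hat\phi|^2\,d\nu_N$ upgrades to vague convergence $\nu_N\to\nu$; the Portmanteau argument (sandwiching $\One_{[-c,c]}$ between continuous bump functions) then yields $\nu_N([-c,c])\to\nu([-c,c])$ whenever $\nu(\{c\})=0$. \emph{The main technical obstacle} is the dominated-convergence step: one must produce an $L^1(\Xxi\times[0,T])$ majorant for $|S(f_N,y,t)/m_{\Phi_2^-}(f_N)|^2$ uniform in $N$. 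This is achieved via the $L^2$-averaged estimate $\int_\Xxi|\Phi_{2,y}^+(t)|^2\,d\wtil\mu\asymp t^{2\alpha}$, which follows from combining Lemma~\ref{lem-cocycle}(i) with $Z$-invariance of $\wtil\mu$, rather than from the cruder pointwise bound of Lemma~\ref{lem-cocycle}(ii).
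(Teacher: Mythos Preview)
Your approach is essentially the paper's, recast by pushing the $\theta^N$-rescaling into $f_N:=f\circ Z^N$ rather than into the frequency-side test function. Both routes invoke the spectral isomorphism (your ``Plancherel-type identity'' is the paper's formula for $\int|\psi(T\om)|^2\,d\sig_f$), integrate by parts to bring in the ergodic integral, insert the asymptotic (\ref{erg-int2}), and renormalize via Lemma~\ref{lem-cocycle}(i) together with $Z$-invariance of $\wtil\mu$; both arrive at the same limiting quadratic form $\phi\mapsto\int_\Xxi\bigl|\int\phi'(\tau)\,\Phi_{2,y}^+(\tau)\,d\tau\bigr|^2\,d\wtil\mu$. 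The paper then extracts the measure by observing that the bilinear limit on the frequency side depends only on the product $\psi_1\psi_2$ and invoking the positive-distribution theorem, whereas you observe translation-invariance on the time side and invoke Bochner--Schwartz; these are dual versions of the same step.

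Two remarks on execution. First, your ``main technical obstacle'' is overstated: for compactly supported $\phi$ the $t$-integral ranges over a bounded set, and the pointwise bound of Lemma~\ref{lem-cocycle}(ii) combined with the uniform error $O(\theta^{-N\eps})$ already gives a majorant independent of $N$ and $y$ --- no $L^2$-averaged estimate is needed. Second, and conversely, your passage from convergence against the functions $|\hat\phi|^2$ (which are never in $C_c(\R)$) to genuine vague convergence, sketched as ``by polarization and density'', does require the polynomial growth bound $\nu_N([-R,R])\le CR^{2-2\alpha}$ together with the rapid decay of $|\hat\phi|^2$ to control tails; this is where the paper's choice of Schwartz test functions on the frequency side, and its more explicit positive-distribution construction of $\eta$, buys a slightly cleaner endgame.
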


\begin{remark}
1. The conditions on the cylindrical functions can be verified directly:
 we have $\int f\,d\wtil{\mu}=\sum_{j=1}^m (\vec{e}_1)_j \int_0^{s_j} \psi_j(t)\,dt$, where $\vec{e}_1$ is the
eigenvector of $\Sf$ corresponding to $\theta=\theta_1$.
The value of
$m_{\Phi_2^{-}}(f)$ is computed in a similar way, see (\ref{int-cyl}). 

2. Note that if $\alpha<1/2$, then our result shows, informally speaking,
that  the   spectral measure ``has a zero of order $1-2\alpha$'' at
zero.

3. An estimate of the spectral measure at zero can be obtained from (\ref{erg-int2}), using Lemma~\ref{lem-var2} and Lemma~\ref{lem-cocycle}(ii). 
Note, however, that this general argument gives much cruder estimates than
those of
Theorem~ \ref{th-zero} and, in particular, {\it does not} give an
asymptotics for the spectral measure.
\end{remark}

\begin{proof}[Proof of Theorem \ref{th-zero}]
Recall that the spectral measure $\sig_f$ on $\R$ is defined by
$$
\int_{-\infty}^\infty e^{2 \pi i\om t}\,d\sig_f(\om) = \langle f\circ h_t, f\rangle\ \ \ \mbox{for}\ t\in \R.
$$
Moreover, there is {\em spectral isomorphism} between $L^2(\R,\sig_f)$ and a closed subspace of $L^2(\Xxi,\wmu)$ which
transforms the unitary group of multiplication by $\{e^{2 \pi i\om t}\}_{t\in \R}$ into the unitary group $g\mapsto
g\circ h_t$. As a consequence, this spectral isomorphism
 maps $e^{2 \pi i\om t}$ (as a function of $\om$) to  $f\circ h_t(x)$ (a function of $x$) for all
$t\in \R$.

Our goal is to analyse the behavior of $\sig_f$ near zero. To this end, we fix a function
$\psi\in\Sk=\Sk(\R)$, the Schwartz class of smooth test functions,
and study the integral
$
 \int |\psi(T\om)|^2\,d\sig_f(\om)\ \ \mbox{for}\ T>0.
$
We obtain, applying the Inverse Fourier Transform (which preserves the class $\Sk$):
$$
\psi(T\om)  = T^{-1}\int_{-\infty}^\infty e^{2 \pi i\om t} \widehat{\psi}(t/T)\,dt.
$$
The latter is
a function of $\om$ in $L^2(d\sig_f)$, which is mapped by the spectral isomorphism to 
\be \label{eq-int1}
T^{-1}\int_{-\infty}^\infty f\circ h_t(x)\cdot \widehat{\psi}(t/T)\,dt,
\ee
a function of $x$ in $L^2(\Xxi,\wmu)$.
Since the spectral isomorphism preserves the norm, we obtain
\be \label{kuka0}
\int |\psi(T\om)|^2\,d\sig_f(\om) = T^{-2} \int_{\Xxi} \Bigl|\int_{-\infty}^\infty f\circ h_t(x)\cdot \widehat{\psi}(t/T)\,dt\Bigr|^2\,d\wmu(x).
\ee
Integration by parts yields, in view of (\ref{erg-int}) and (\ref{erg-int2}):
\begin{eqnarray}
\int_{-\infty}^\infty f\circ h_t(x)\cdot \widehat{\psi}(t/T)\,dt & =& -T^{-1}\int_{-\infty}^\infty S(f,x,t) \bigl({\widehat{\psi}}\bigr)'(t/T)\,dt \nonumber \\
&= & -T^{-1}\int_{-\infty}^\infty\Bigl(\Phi_{2,x}^+(t)\cdot m_{\Phi_2^-}(f) + {\rm \Error}(t)\Bigr)\bigl({\widehat{\psi}}\bigr)'(t/T)\,dt. \label{kuka2}
\end{eqnarray}
 First let us estimate the error term. 
Since $\widehat{\psi}$ is in $\Sk$, we have
$$
|\bigl({\widehat{\psi}}\bigr)'(t)|\le C_{\psi,\alpha}\min(1,|t|^{-\alpha-1}).
$$
Together with
 (\ref{error}), this yields
\begin{eqnarray}
T^{-1}\Bigl|\int_{-\infty}^\infty \Error(t) \bigl({\widehat{\psi}}\bigr)'(t/T)\,dt\Bigr| & \le & \nonumber
2C_1 C_{\psi,\alpha}T^{-1}\Bigl(1 + \int_1^T t^{\alpha-\eps}\,dt + \int_T^\infty t^{\alpha-\eps}(t/T)^{-\alpha-1}\,dt\Bigr) \\ &=& O(T^{\alpha-\eps}).\label{error2}
\end{eqnarray}
For the main term, we assume that
$$
T = \theta^N,\ N\ge 1,
$$
let $t=\theta^N\tau$, and use renormalization (Lemma~\ref{lem-cocycle}(i)):
$$
\Phi_{2,x}^+(t) = \Phi_{2,x}^+(\theta^N \tau) = \theta_2^N \Phi^+_{2,Z^{-N}(x)}(\tau) = T^\alpha 
\Phi^+_{2,Z^{-N}(x)}(\tau).
$$
After the change of variable the main term in (\ref{kuka2}) becomes 
$$
-T^\alpha m_{\Phi_2^-}(f) \int_{\R} \Phi^+_{2,Z^{-N}(x)}(\tau) \bigl({\widehat{\psi}}\bigr)'(\tau)\,d\tau,
$$
We substitute this and (\ref{error2}) into (\ref{kuka2}), which is then substituted into (\ref{kuka0}) to obtain
$$
\int |\psi(T\om)|^2\,d\sig_f(\om) = T^{2\alpha-2} (m_{\Phi_2^-}(f))^2 
\int_{\Xxi} \Bigl|\int_{\R} \Phi^+_{2,x}(\tau)
\bigl({\widehat{\psi}}\bigr)'(\tau)\,d\tau\Bigr|^2 d\wmu(x) + O(T^{2\alpha-2-\eps}),
$$
as $T\to \infty$, where we used that $\wmu$ is $Z$-invariant \cite[Lemma 2.12]{BuSol}.
Finally,  letting $T=\theta^N$, with $N\to \infty$, we obtain
\be \label{kuka3}
\lim_{N\to \infty} \frac{\int |\psi(\theta^N \om)|^2\,d\sig_f(\om)}{(\theta^N)^{2\alpha-2}}= ( m_{\Phi_2^-}(f))^2  \int_{\Xxi} \Bigl|\int_{\R} \Phi^+_{2,x}(\tau)
\bigl({\widehat{\psi}}\bigr)'(\tau)\,d\tau\Bigr|^2\,d\wmu(x).
\ee 
Note that the right-hand is non-trivial (not constant zero), since $\Phi_{2,x}^+$ is non-degenerate and we have freedom in the choice of $\psi$. Observe, moreover, that the entire argument can be repeated for a pair of  real  functions $\psi_1, \psi_2\in \Sk(\R)$, which yields
\begin{eqnarray}
& & Q(\psi_1,\psi_2):=\lim_{N\to \infty} \frac{\int \psi_1(\theta^N  \om)\,\psi_2(\theta^N  \om)\,d\sig_f(\om)}{\theta^{N(2\alpha-2)}} =\nonumber \\[1.3ex]
& & \left(m_{\Phi_2^-}(f)\right)^2  \int_{\Xxi}\left( \int_{\R} \Phi^+_{2,x}(\tau)
\bigl({\widehat{\psi_1}}\bigr)'(\tau)\,d\tau\cdot \int_{\R} \Phi^+_{2,x}(\tau){\bigl({\widehat{\psi_2}}\bigr)'(\tau)}\,d\tau\right)
d\wmu(x). \label{kuka4}
\end{eqnarray}
 We thus have a bilinear continuous functional $Q(\psi_1, \psi_2)$ on $\Sk\times\Sk$ that depends only on the product $\psi_1\psi_2$,
not identically zero and is nonnegative on nonnegative functions.
We now recall the following well-known  fact from the theory of distributions:
Let $D$ be a tempered distribution on the real line such that for any 
 Schwartz function $\varphi\ge 0$ we have $D(\varphi)\geq 0$. Then there exists a $\sigma$-finite positive Radon measure $\eta$ on $\R$ such that 
$$
D(\varphi)=\int_{\R} \varphi\, d\eta.
$$
{\bf Claim.} {\em There exists a $\sigma$-finite positive Radon measure $\eta$ on $\R$ such that for any $\psi_1,\psi_2\in C^\infty_0$ we have
$$
Q(\psi_1,\psi_2) = \int_\R \psi_1\psi_2\,d\eta.
$$
}

In order to prove the claim, observe that for any $\psi_1\in \Sk,\ \psi_1\ge 0$, the functional $\varphi\mapsto Q(\psi_1,\varphi)$ is a tempered distribution which is non-negative on $\varphi\ge 0$,  hence there exists a measure $\eta_{\psi_1}$ such that
$$
Q(\psi_1,\varphi) = \int_\R \varphi\,d\eta_{\psi_1},\ \ \varphi\in \Sk.
$$
Now note that $Q(\psi_1\psi_2,\varphi) = Q(\psi_1,\psi_2\varphi)$, hence
\be \label{temper}
d\eta_{\psi_1\psi_2} = \psi_2\,d\eta_{\psi_1},\ \ \mbox{for}\ \psi_1,\psi_2\in \Sk,\ \psi_1\ge 0,\ \psi_2\ge 0.
\ee
Take any $\psi>0$ in $\Sk$, and let 
$$
d\eta:= \psi^{-1}d\eta_{\psi}
$$
%(note, however, that $\eta$ need not define a tempered distribution!). 
It follows from (\ref{temper}) that $\eta$ does not depend on the choice of $\psi$. Now let $\psi_1,\psi_2\in C^\infty_0$. We have
$$
Q(\psi_1,\psi_2) = Q(\psi, \psi_1\psi_2/\psi) = \int_\R \psi_1\psi_2\psi^{-1}\,d\eta_\psi = \int_\R \psi_1\psi_2\,d\eta,
$$
and the proof of the claim is complete. In the last line we used that $\psi_1\psi_2\psi^{-1}\in C^\infty_0\subset \Sk$.
\qed

\medskip

We  can now conclude that the formula
\begin{equation}\label{kuka5}
\lim_{N\to \infty} \frac{\int \psi_1(\theta^N 
\om)\,\psi_2(\theta^N  \om)\,d\sig_f(\om)}{(\theta^N
)^{2\alpha-2}} =\int_{\mathbb R} \psi_1
\psi_2\,d\eta\end{equation}
holds not just for $\psi_1,\psi_2$ in $C_0^\infty$, but also
if
$\psi_1$ and $\psi_2$ are continuous compactly supported functions or
characteristic functions of  intervals whose endpoints are points of continuity of the measure $\eta$. Indeed, take $a,b\in {\mathbb
R}$ with $\eta(\{a\}) = \eta(\{b\})=0$, let $\psi=\psi_1=\psi_2=\chi_{[a,b]}$, and choose sequences $\psi^{(n,+)},
\psi^{(n,-)}$ of compactly supported
$C^\infty$ functions approximating
$\chi_{[a,b]}$ from above and below, converging to $\chi_{[a,b]}$
pointwise and, for any $\delta>0$, uniformly on the complement to the set
$$
(a-\delta, a+\delta)\cup (b-\delta, b+\delta).
$$
Since $\eta$ does not have atoms at $a$ and $b$, for any  $\varepsilon>0$  there exists
a sufficiently small $\delta$ such that
$$
\eta((a-\delta, a+\delta)\cup (b-\delta, b+\delta))<\varepsilon.
$$
Consequently,
$$
\lim\limits_{n\to\infty}\int_{\mathbb R} \left(\psi^{(n,+)}
\psi^{(n,+)}-\psi^{(n,-)} \psi^{(n,-)}\right)d\eta=0,
$$
and (\ref{kuka5}) holds for $\psi_1=\psi_2=\chi_{[a,b]}$. The case of
compactly supported continuous functions is obtained even easier, by uniform approximation from above and from below.
It remains to substitute the characteristic function $\chi_{[-c,c]}$ for $\psi_1=\psi_2$  to obtain
(\ref{eq-zero2}),
as desired. The theorem is proved completely.
\end{proof}

%%%%%%%%%%%%%%%%%%%%%%%%%%%%%%%%%%%%%%%%%%%%%%%%%

\section{Dimension of spectral measures}

There are many notions of ``fractal dimension'' for measures; we will be concerned with the {\em Hausdorff dimension} and the
{\em local dimension}.  For a finite Borel measure $\nu$ the Hausdorff dimension is defined by 
$$
\dim_H(\nu) = \inf\{\dim_H(E):\ E\ \mbox{is a Borel set with}\ \nu(E)>0\}.
$$
The {\em lower local dimension} of a measure $\nu$ at $\om$ is defined by $$\underline{d}(\nu,\om) = \liminf_{r\to 0} \frac{\log\nu(B(x,\om))}{\log r}\,.$$
 The upper local dimension is defined similarly, with $\limsup$; the local dimension is said to exist if there is a limit. There is a relation between
these notions, as follows:
$$
\dim_H(\nu) = \sup\{s:\ \und{d}(\nu,\om)\ge s\ \mbox{for $\nu$-almost all}\ \om\}.
$$
For the proof of this, see e.g.\ \cite[Prop.\,10.2]{Falc}. The following is now an immediate consequence of Theorem~\ref{th-holder1}.

\begin{corollary} \label{cor-dim1}
Let $\zeta$ be a primitive aperiodic substitution on $\Ak=\{1,\ldots,m\}$, with substitution matrix $\Sf$.  Suppose that the characteristic polynomial $P_{\Sf}(t)$ is irreducible and the second eigenvalue  satisfies $|\theta_2|>1$. Then there exists $\gam>0$ such that for Lebesgue-almost every suspension flow the spectral measures $\sig_a$, $a\in \Ak$, of the dynamical system $(\Xxi^{\vec{s}},h_t)$ satisfy
$$
\und{d}(\sig_a,\om)\ge \gam > 0\ \ \mbox{for all}\ \om\ne 0.
$$
If $f = \sum_{a\in \Ak} d_a \sig_a$ is orthogonal to constants, then $\dim_H(\sig_f)\ge \gam$.
\end{corollary}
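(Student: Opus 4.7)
The plan is to derive both statements directly from Theorem~\ref{th-holder1} together with standard facts relating pointwise lower local dimension to Hausdorff dimension. Fix $\vec{s}$ in the full-measure set produced by Theorem~\ref{th-holder1}. Given $\om\ne 0$, pick any $B>1$ with $|\om|\in[B^{-1},B]$; Theorem~\ref{th-holder1} supplies $r_0,C>0$ such that $\sig_a(B(\om,r))\le C r^\gam$ for all $a\in\Ak$ and all $r\le r_0$. Dividing $\log\sig_a(B(\om,r))$ by $\log r$ and letting $r\to 0^+$ yields $\ldimloc(\sig_a,\om)\ge\gam$, which is the first assertion.

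For the second assertion, I read $f=\sum_{a\in\Ak}d_a\One_{\Xx_a}$ (the literal reading ``$f=\sum d_a\sig_a$'' being evidently a typo). Let $P_E$ denote the spectral projection associated to a Borel set $E\subset\R$ for the unitary group on $L^2(\Xxi^{\vec{s}},\wmu)$ generated by $h_t$. Then $\sig_f(E)=\|P_E f\|_{L^2}^2$, and the triangle inequality together with Cauchy-Schwarz gives
\[
\sig_f(E)\le \Bigl(\sum_a|d_a|\,\sig_a(E)^{1/2}\Bigr)^2\le m\sum_a|d_a|^2\sig_a(E).
\]
Applied with $E=B(\om,r)$ and combined with the first part, this yields $\sig_f(B(\om,r))\le C'r^\gam$ at every $\om\ne 0$, and hence $\ldimloc(\sig_f,\om)\ge\gam$ there.

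It remains to pass from the pointwise lower local dimension bound to a Hausdorff dimension bound. I invoke the criterion $\dim_H(\nu)\ge\sup\{s:\ldimloc(\nu,\om)\ge s\text{ for }\nu\text{-a.e.\ }\om\}$ recalled at the start of Section~7. The atom of $\sig_f$ at $0$ equals the squared norm of the projection of $f$ onto the space of $h_t$-invariant functions in $L^2(\Xxi^{\vec{s}},\wmu)$, which coincides with the constants by unique ergodicity of the suspension flow (itself a consequence of unique ergodicity of the base substitution $\Z$-action and positivity of $\vec{s}$). Since $f$ is orthogonal to constants, $\sig_f(\{0\})=0$; thus $\sig_f$-almost every $\om$ is nonzero, the bound $\ldimloc(\sig_f,\om)\ge\gam$ holds on a set of full $\sig_f$-measure, and $\dim_H(\sig_f)\ge\gam$ follows.

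No step is delicate: the argument is essentially bookkeeping, with all quantitative content packaged into Theorem~\ref{th-holder1}. If anything requires care it is the Cauchy-Schwarz-type comparison between $\sig_f$ and the individual $\sig_a$, but once the spectral projections $P_E$ are introduced this reduces to a one-line Hilbert space estimate.
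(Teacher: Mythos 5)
Your argument is correct and follows exactly the route the paper intends: the paper states the corollary as an ``immediate consequence'' of Theorem~\ref{th-holder1} together with the dimension facts recalled at the start of Section~7, and your write-up simply fills in the details (the pointwise H\"older bound giving $\und{d}(\sig_a,\om)\ge\gam$ for $\om\ne 0$, the spectral-projection/Cauchy--Schwarz comparison of $\sig_f$ with the $\sig_a$, and the absence of an atom at $0$ because $f$ is orthogonal to constants). Your reading of $f=\sum_a d_a\One_{\Xx_a}$ is the correct interpretation of the statement's typo, and the observation that $\sig_f(\{0\})=0$ must be checked before invoking the a.e.\ local-dimension criterion is exactly the one nontrivial point.
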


We remark that the same inequality $\dim_H(\sig_{\max})\ge \gam$ can be obtained for the maximal spectral type (in the orthogonal
complement of
constant functions) of a.e.\ suspension flow,
using (\ref{max_type}) and a minor extension of Theorem~\ref{th-holder1}.

We can also estimate the lower local dimensions $\und{d}(\sig_a,\om)$ from below in terms of the top Lyapunov exponent of the matrix
product (\ref{def-Pbi}).

\begin{prop} \label{prop-locdim}
 Let $\zeta$ be a primitive aperiodic substitution on $\Ak = \{1,\ldots,m\}$, $\om \in [0,1)$, and $\M_n(\om)$ are $m\times m$ matrices defined by (\ref{matr}). Suppose that $\theta$ is the Perron-Frobenius eigenvalue of the substitution matrix. Let
 $$
 \alpha_\om = \limsup_{n\to \infty} \|\M_{n-1}(\om) \cdots \M_0(\om)\|^{1/n}.
 $$
 Then 
 \be \label{ki1}
 \limsup_{N\to \infty} \frac{\sup_{x\in X_\zeta} \log|\Phi_a(x[0,N-1],\om)|}{\log N} \le \log_\theta (\alpha_\om) \ \ \ \mbox{for}\ a\in \Ak
 \ee
 and
 \be \label{ki2}
 \underline{d}(\sig_a,\om) \ge 2 - 2\log_\theta(\alpha_\om), \ \ \ \mbox{for}\ a\in \Ak.
 \ee
 \end{prop}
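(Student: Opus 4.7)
The plan is to establish (\ref{ki1}) by applying Proposition~\ref{prop-Step1} with a sequence $F_\om(n)$ extracted from the definition of $\alpha_\om$, and then to deduce (\ref{ki2}) by combining the resulting estimate on $\sup_{x}|\Phi_a(x[0,N-1],\om)|$ with Lemma~\ref{lem-easy1}.

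To prove (\ref{ki1}), fix $\eps>0$. By definition of $\alpha_\om$ as a $\limsup$, there is $n_0$ such that $\|\M_{n-1}(\om)\cdots\M_0(\om)\|\le(\alpha_\om+\eps)^n$ for all $n\ge n_0$. Since $\vec\Psi_n^{(a)}(\om)=\M_{n-1}(\om)\cdots\M_0(\om)\bbe_a$ by (\ref{eq-matr2}), and $\Phi_a(\zeta^n(b),\om)$ is its $b$-th component, we get $|\Phi_a(\zeta^n(b),\om)|\le C_0(\alpha_\om+\eps)^n$ for $n\ge n_0$ and trivially $|\Phi_a(\zeta^n(b),\om)|\le|\zeta^n(b)|$ for $n<n_0$. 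Combined with (\ref{eq-PF1}), this yields $|\Phi_a(\zeta^n(b),\om)|\le|\zeta^n(b)|\cdot F_\om(n)$ for every $n\ge 0$ and $b\in\Ak$, where
$$F_\om(n):=C_1\bigl(\max(\alpha_\om,1)/\th+\eps'\bigr)^n,$$
with $\eps'\to 0$ as $\eps\to 0$. One verifies that $F_\om(n+1)/F_\om(n)$ is a fixed ratio in $(1/\th',1]$ with $\th'=\th/(\max(\alpha_\om,1)+\th\eps')$; assuming $\alpha_\om<\th$ (otherwise (\ref{ki2}) is trivial), a sufficiently small $\eps$ makes $1<\th'<\th$. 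Proposition~\ref{prop-Step1} then gives
$$\sup_{x\in X_\zeta}|\Phi_a(x[0,N-1],\om)|\le C_2\,N\cdot F_\om(\lfloor\log_\th N-C_3\rfloor)=O\bigl(N^{\log_\th\max(\alpha_\om,1)+\eps''}\bigr),$$
and passing to $\limsup$ followed by $\eps\to 0$ establishes (\ref{ki1}) in the informative regime $\alpha_\om\ge 1$.

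For (\ref{ki2}), given $r\in(0,1/2]$ set $N=\lfloor(2r)^{-1}\rfloor$. Since $\mu$ is a probability measure and $S_N^x(\One_{[a]},\om)=\Phi_a(x[0,N-1],\om)$ by (\ref{uga1}), formula (\ref{eq-GN}) gives
$$G_N(\One_{[a]},\om)\le N^{-1}\sup_{x\in X_\zeta}|\Phi_a(x[0,N-1],\om)|^2\le C_4\,N^{2\log_\th\max(\alpha_\om,1)+2\eps''-1}.$$
Lemma~\ref{lem-easy1} then yields $\sig_a(B(\om,r))\le(\pi^2/4N)G_N\le C_5\,r^{2-2\log_\th\max(\alpha_\om,1)-2\eps''}$, so
$$\und{d}(\sig_a,\om)\ge 2-2\log_\th\max(\alpha_\om,1)-2\eps''\ge 2-2\log_\th\alpha_\om-2\eps'',$$
and sending $\eps''\to 0$ completes (\ref{ki2}).

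The only delicate point is the two-sided monotonicity hypothesis required in Proposition~\ref{prop-Step1}, which forces the substitution of $\max(\alpha_\om,1)$ for $\alpha_\om$ when building $F_\om$; this difference vanishes in the case $\alpha_\om\ge 1$ of interest, where (\ref{ki1}) and (\ref{ki2}) hold literally as stated.
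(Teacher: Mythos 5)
Your proposal is correct and follows essentially the same route as the paper: feed the bound $\|\M_{n-1}(\om)\cdots\M_0(\om)\|\le(\alpha_\om+\eps)^n$ into Proposition~\ref{prop-Step1} via a geometric choice of $F_\om$, then pass to $G_N$ through (\ref{uga1}) and (\ref{eq-GN}) and invoke Lemma~\ref{lem-easy1}. Your explicit caveat about replacing $\alpha_\om$ by $\max(\alpha_\om,1)$ to satisfy (\ref{estim3}) is in fact slightly more careful than the paper, whose choice $\theta'=\theta(\alpha_\om+\eps)^{-1}$ tacitly assumes $\alpha_\om+\eps>1$ as well.
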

 
 \begin{proof} Note that $\alpha_\om\le \theta$, since $M_j^{|\cdot|} \le \Sf^t$ by the definition (\ref{matr}). If $\alpha_\om=\theta$, then the
inequalities claimed are obvious, so we can assume $\alpha_\om<\theta$.
Fix any $\eps>0$ such that $\alpha_\om+\eps < \theta$. We have $\|\M_{n-1}(\om) \cdots \M_0(\om)\|\le (\alpha_\om+\eps)^n$
for $n\ge n_0$. It follows from (\ref{eq-matr2}) and (\ref{def-Psi})  that 
$$
|\Phi_a(\zeta^n(b),\om)| \le (\alpha_\om+\eps)^n\ \ \mbox{for} \ \ a,b\in \Ak,\ \ n\ge n_0.
$$
Thus we can apply Proposition~\ref{prop-Step1} with $F_\om(n) = C(\frac{\alpha_\om+\eps}{\theta})^n$ for some $C>0$.
Condition (\ref{estim3}) holds with $\theta' = \theta(\alpha_\om+\eps)^{-1}$. We obtain that for $N$ sufficiently large,
$$
|\Phi_a(x[0,N-1],\om)| \le \wtil{C}_1 N^{\log_\theta(\alpha_\om+\eps)},
$$
and since $\eps>0$ can be arbitrarily small, (\ref{ki1}) follows.

Next we prove (\ref{ki2}).
 In view of (\ref{uga1}) and (\ref{eq-GN}), the inequality (\ref{ki1}) implies
 $$
 \limsup_{N\to \infty} \frac{\log G_N(\One_{[a]},\om)}{\log N} \le 2\log_\theta(\alpha_\om)-1.
 $$
 Now Lemma~\ref{lem-easy1} yields 
$$
\liminf_{r\to 0} \frac{\log \sig_a(B(\om,r))}{\log r} \ge 2 - 2\log_\theta(\alpha_\om),
$$
which is (\ref{ki2}), by the definition of lower local dimension.
 \end{proof}
 
 \begin{remark}
 There is a similar statement for suspension flows from Sections 4 and 5. In that setting,
 $$
 \alpha_\om = \limsup_{n\to \infty} \|\M_{n-1}^{\vec{s}}(\om) \cdots \M_0^{\vec{s}}(\om)\|^{1/n},
 $$
 where 
 $\M_j^{\vec{s}}$ are given by (\ref{matrices2}). The matrix product takes a particularly nice form in the self-similar case from Section 5, because then $\M_j^{\vec{s}}(\om) = \M_0^{\vec{s}}(\theta^j \om)$.
 \end{remark}

%%%%%%%%%%%%%%%%%%%%%%%%%%%%%%%%%%%%%%%%%%%%%%%%%

\section{Appendix}

 This section contains some proofs, as well as definitions and statements, which complement the main body of the paper. 

\begin{proof}[Proof of Lemma~\ref{lem-spec1}]
It is enough to check that the Fourier coefficients of the a.c.\ measures in the right-hand side converge to $\widehat{\sig}_{f,g}(k)$ for all $k\in\Z$. Note that
$$\langle e^{-2\pi i n \om}U^n f, e^{-2\pi i \ell\om}  U^\ell g\rangle = \langle U^{n-\ell} f, g \rangle e^{-2\pi i (n-\ell)\om},$$ and $(e^{-2\pi i n\om} d\om)\,\widehat{\ }\,(-k) = \delta_{k,n}$, and the claim follows easily.
\end{proof}
 
\begin{proof}[Proof of Lemma~\ref{lem-var2}] 
 We have 
 \begin{eqnarray*}
 G_R(f,\om) & = & R^{-1} \left\langle \int_0^R e^{-2\pi i \om y} f\circ h_y \,dy,\int_0^R  e^{-2\pi i \om z} f\circ h_z \,dz\right\rangle \\[1.2ex]
                      & = & R^{-1} \int_{0}^R \int_{0}^R  e^{-2\pi i (y-z)\om} \langle f\circ h_{y-z}, f \rangle \,dy\,dz \\[1.2ex]
                      & = & R^{-1} \int_{0}^R  \int_0^R e^{-2\pi i (y-z)\om} \int_{\R} e^{2 \pi i (y-z)\tau}\,d\sig_f(\tau) \,dy\,dz \\[1.2ex]
                      & = & R^{-1} \int_{\R} \left(\int_{[0,R]^{2}} e^{2\pi i (y-z)(\tau-\om)} dy\,dz\right)\,d \sig_f(\tau) \\
                      & = & \int_\R K_R(\om-\tau)\,d\sig_f(\tau),
 \end{eqnarray*}
 where
 $$
 K_R(y) = R^{-1} \left( \frac{\sin (\pi R y)}{\pi y}\right)^2
 $$
 is the Fej\'er kernel for $\R$. Taking $r = \frac{1}{2R}$, we have $K_R(y)\ge \frac{4R}{\pi^2}$ on $[-r,r]$,
hence 
$
\sig_f([\om-r,\om+r])\le \frac{\pi^2}{4R} G_R(f,\om)$, and (\ref{eq-Hof12})  follows. 
 \end{proof}

\subsection{Classes of algebraic integers and Bernoulli convolutions}

\begin{defi} \label{def-Pisot}
An algebraic integer $\theta>1$ is called a {\em Pisot}, or {\em PV (Pisot-Vijayaraghavan)} number, if all its
Galois conjugates, i.e.\ other zeros of the minimal polynomial for $\theta$, are all inside the unit circle.
An algebraic integer $\theta>1$ is called a {\em Salem} number, if it has no conjugates outside the unit circle and at least one conjugate on the unit circle. (In this case necessarily $\theta$ has even degree $\ge 4$ and
all conjugates except $\theta^{-1}$ are on the unit cirle).
\end{defi}

See \cite{Salem_book} for the basic theory of PV and Salem numbers, as well as their connection to harmonic analysis. These classes of algebraic integers play an important role, not just in the theory of Diophantine approximation and uniform distribution, but also in dynamical systems and fractal geometry, in particular, in the study of Bernoulli convolutions.

\begin{defi}
The (infinite) Bernoulli
convolution measure $\nu_\lam$ with parameter $\lam\in (0,1)$ is the distribution of the random series $\sum_{n=0}^\infty \pm \lam^n$, where the signs are chosen randomly and independently with probabilities $(\half,\half)$.
\end{defi}

Bernoulli convolution measures have been studied extensively since the 1930's, but there are still many difficult open problems; among them an outstanding one is to decide for which $\lam\in (1/2,1)$ the Bernoulli convolution measure $\nu_\lam$ is absolutely continuous. By the ``pure types'' law, $\nu_\lam$ is either singular, or absolutely continuous. The following discussion is not supposed to be comprehensive; many of the statements naturally extend to more general classes of self-similar measures. We will, however, mention the case of ``biased'' Bernoulli convolutions $\nu_\lam^p$, defined analogously to $\nu_\lam$, but with the signs chosen with probabilities $(p,1-p)$, for $p\in (\half,1)$.
The early work on Bernoulli convolutions focused on their Fourier transforms:
$$
\widehat{\nu}_\lam(\xi) = \prod_{n=0}^\infty \cos(2\pi \lam^n \xi).
$$
Erd\H{o}s \cite{Erd0} proved that $\widehat{\nu}_\lam(\xi)\not\to 0$ as $\xi\to\infty$ when $\lam^{-1}$ is a PV number, hence such $\nu_\lam$ are singular. Salem \cite{Salem} proved that this characterizes PV numbers, namely, for all other $\lam$ the Fourier transform vanishes at infinity. However, his proof does not yield any quantitative estimates of the decay of the Fourier transform. Erd\H{o}s
\cite{Erd} proved that for almost every $\lam$ the Fourier transform has some power decay, which he then used to show that $\nu_\lam$ is absolutely continuous for almost every $\lam$ sufficiently close to one. Kahane \cite{Kahane} observed that Erd\H{o}s' proof actually gives an estimate of the dimension of the exceptional set, and in fact the power decay of the Fourier transform (for some power depending on $\lam$) holds for all $\lam$ outside a set of Hausdorff dimension zero. This is what we called the ``Erd\H{o}s-Kahane argument'' in Section 4. The survey \cite{sixty} contains a detailed exposition of these results (Erd\H{o}s' Theorem with Kahane's improvement) with quantitative estimates.  Garsia \cite{Garsia} proved that if $\theta=\lam^{-1}$ is an algebraic integer  whose conjugates are all outside the unit circle and the constant term of the minimal polynomial equals $\pm 2$, then $\nu_\lam$ is absolutely continuous (even with bounded density). As of this writing, this still remains the largest explicitly known class of $\lam$ for which the Bernoulli convolution is absolutely continuous.   It was proved in \cite[Theorem 1.6]{DFW} that the Fourier transform for these ``Garsia'' Bernoulli convolutions has a power decay at infinity. Using different techniques (integration over the parameter and ``transversality method''), Solomyak \cite{SolErd} established absolute continuity of $\nu_\lam$ for a.e.\ $\lam\in (1/2,1)$. For further advances using the transversality method, we  refer the reader to \cite{sixty}. 
Recently  Hochman \cite{Hochman}, using ideas from additive combinatorics, proved that for all $\lam\in (1/2,1)$ outside a set of  Hausdorff dimension zero, and also for all algebraic $\lam\in (1/2,1)$, for which the system has no ``exact overlaps,'' the Hausdorff dimension of $\nu_\lam$ equals one (see Section 7 above for the definition). Having $\dim_H(\nu)=1$ for a measure $\nu$ falls just short of absolute continuity, but even more recently Shmerkin \cite{pablo} combined
Hochman's Theorem with the Erd\H{o}s-Kahane result mentioned above to obtain absolute continuity outside a zero-dimensional set of exceptions.

\medskip

We observe that Proposition~\ref{prop-alg} immediately implies the following

\begin{corollary} \label{cor-BC}
Let $\theta$ be an algebraic integer which has at least one conjugate outside the unit circle, and let $\lam = \theta^{-1}$. Then for any $p\in (0,1)$ there exists $\alpha>0$ such that
$$
\sup_{\xi\in \R} |\widehat{\nu}^p_\lam(\xi)| (\log(2+|\xi|))^\alpha < \infty.
$$
\end{corollary}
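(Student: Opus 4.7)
The plan is to reduce the claimed bound on $|\widehat{\nu}^p_\lam(\xi)|$ to an application of Proposition~\ref{prop-alg} after a routine manipulation of the Fourier product. Writing the biased Bernoulli convolution $\nu_\lam^p$ as the distribution of $\sum_{n\ge 0}\pm\lam^n$ with signs chosen independently with probabilities $(p,1-p)$, its Fourier transform factors as
$$
\widehat{\nu}^p_\lam(\xi)=\prod_{n=0}^\infty \Bigl(p\, e^{2\pi i \lam^n \xi}+(1-p)e^{-2\pi i \lam^n \xi}\Bigr).
$$
Using the identity $|p e^{i\phi}+(1-p)e^{-i\phi}|^2=1-4p(1-p)\sin^2\phi$ with $\phi=2\pi\lam^n\xi$ and the elementary inequality $|\sin(\pi y)|\ge 2\|y\|$, we obtain
$$
|\widehat{\nu}^p_\lam(\xi)|^2\le \prod_{n=0}^\infty \Bigl(1-16 p(1-p)\bigl\|2\lam^n\xi\bigr\|^2\Bigr)\le\exp\Bigl(-16 p(1-p)\sum_{n=0}^\infty \bigl\|2\lam^n\xi\bigr\|^2\Bigr),
$$
so the task is reduced to a lower bound on the sum $\sum_n\|2\lam^n\xi\|^2$.

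Next I would reindex to bring the sum into the form appearing in Proposition~\ref{prop-alg}. Given $\xi$ with $|\xi|\ge 1$, set $N=\lfloor \log_\theta(2|\xi|)\rfloor$ and $t:=2|\xi|\theta^{-N}\in [1,\theta)$. Since $\lam^n=\theta^{-n}$ and $\|\cdot\|$ is even, the substitution $k=N-n$ gives
$$
\sum_{n=0}^\infty \bigl\|2\lam^n\xi\bigr\|^2\ \ge\ \sum_{n=0}^{N-1}\bigl\|2|\xi|\theta^{-n}\bigr\|^2\ =\ \sum_{k=1}^{N}\bigl\|t\,\theta^k\bigr\|^2\ \ge\ \sum_{k=0}^{N-1}\bigl\|t\,\theta^k\bigr\|^2 - \tfrac14.
$$
Because $\theta$ is an algebraic integer with a conjugate outside the unit circle, Proposition~\ref{prop-alg} applies with this $t\in[1,\theta)$, yielding constants $C,\alpha,\beta>0$ such that
$$
\exp\Bigl(-\sum_{k=0}^{N-1}\|t\theta^k\|^2\Bigr)\le C\bigl(\log(1+t)\bigr)^{1/\log\beta} N^{-\alpha}\le C'N^{-\alpha},
$$
the last inequality because $t<\theta$ is bounded by a constant depending only on $\theta$.

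Combining these estimates and using $N\asymp \log(2+|\xi|)$, we arrive at
$$
|\widehat{\nu}^p_\lam(\xi)|^2\le C''\bigl(\log(2+|\xi|)\bigr)^{-16 p(1-p)\alpha},
$$
so the corollary holds with exponent $\alpha_p:=8p(1-p)\alpha>0$ after handling $|\xi|\le 1$ trivially via $|\widehat{\nu}^p_\lam|\le 1$. There is no serious obstacle here: Proposition~\ref{prop-alg} does all of the Diophantine work, and the only point requiring minor care is the index reversal $k=N-n$ together with the choice of $N$ ensuring $t=2|\xi|\theta^{-N}$ lies in a compact subset of $(0,\infty)$ bounded away from $0$ and $\infty$, so that the prefactor $(\log(1+t))^{1/\log\beta}$ in Proposition~\ref{prop-alg} is harmless.
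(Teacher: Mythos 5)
Your proposal is correct and follows essentially the same route as the paper: truncate the infinite product at $N\approx\log_\theta|\xi|$, bound each factor by $1-c_p\|2\theta^k\zeta\|^2$ via an elementary trigonometric inequality, and invoke Proposition~\ref{prop-alg} with a renormalized parameter $t$ in a fixed compact interval. The only (immaterial) difference is that you use the exact identity $|pe^{i\phi}+(1-p)e^{-i\phi}|^2=1-4p(1-p)\sin^2\phi$ together with $|\sin\pi y|\ge 2\|y\|$, where the paper instead applies its inequality $|1+e^{2\pi i\tau}|\le 2-\frac12\|\tau\|^2$.
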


\begin{proof}
Let $\xi \in [\theta^N,\theta^{N+1}]$, and set $\zeta = \theta^{-N}\xi$. Then 
\begin{eqnarray*}
|\widehat{\nu}_\lam^p(\xi) | & = & \prod_{n=0}^\infty \Bigl| p e^{-2\pi i \lam^n \xi} + (1-p) e^{2\pi i \lam^n \xi}\Bigr| \\
                                                 & \le  & \prod_{n=0}^N \Bigl| p +(1-p)e^{4\pi i \theta^n \zeta}\Bigr| \\
                                                 & \le & \prod_{n=0}^N \Bigl( 1 - {\frac{1-p}{2}}\|2\theta^n \zeta\|^2\Bigr),
\end{eqnarray*}
using (\ref{lem-elem1}),
and Proposition \ref{prop-alg} yields the result.
\end{proof}

\begin{remark}
1. R. Kershner \cite{Kersh} obtained a similar result for rationals $\lam = a/b$, with $1<a<b$; see also \cite{Dai}.

2.  Corollary~\ref{cor-BC} extends to the class of self-similar measures of the form $\nu = \sum_{j=1}^m p_j (\nu\circ f_j^{-1})$, where
$f_j(x) = \lam x + a_j, \ a_j \in \Z$, and $(p_1,\ldots,p_m)$ is a probability vector (the Bernoulli convolution $\nu_\lam^p$ is a special case with $m=2$, $a_1=-1, a_2=1$).

3. If $\theta>1$ is the Perron-Frobenius eigenvalue of a substitution matrix, and a biased Bernoulli convolution $\nu_\lam^p$, with $\lam = \theta^{-1}$ and $p\ne \half$, has a power decay of the Fourier transform with exponent $\gamma$, then $\sup_{t\in \R} \exp(N\gam - \sum_{k=0}^{N-1} \|t\theta^k\|^2 )<\infty$
and as a consequence, spectral measures of the self-similar suspension flow are H\"older continuous. (The ``bias'' is important here, because the factors $p e^{-2\pi i \lam^n \xi} + (1-p) e^{2\pi i \lam^n \xi}$ are bounded away from zero in absolute value.)
However,  power decay of the Fourier transform is not known for any biased Bernoulli convolution; the result of \cite[Theorem 1.6]{DFW} covers the unbiased case only.
\end{remark}

%%%%%%%%%%%%%%%%%%%%%%%%%%%%%%%%%%%%%%%%%%%%%%%%

\end{document}